\def\defthm#1#2#3#4{
  \newtheorem{#1}[theorem]{#3}
  \newtheorem*{#1*}{#3}
  \newtheorem{#2}[theorem]{#4}
  \newtheorem*{#2*}{#4}
  \crefname{#1}{#3}{#4}
  \crefname{#2}{#4}{#4}  
}
\newtheoremstyle{mythm}%
{10pt}
{}
{\itshape}
{}
{\bf}
{.}
{.5em}
{}%
\newtheoremstyle{mydef}%
{10pt}
{3pt}
{}
{}
{\bf}
{.}
{.5em}
{}%
\newtheoremstyle{myrmk}%
{10pt}
{3pt}
{}
{}
{\bf}
{.}
{.5em}
{}%
\theoremstyle{mythm}
\newtheorem{theorem}{Theorem}[section]
\newtheorem*{theorem*}{Theorem}
\theoremstyle{mydef}
\theoremstyle{myrmk}
\newtheorem*{replemmax}{\reptitle}
 {\end{replemmax}}
\newtheorem*{repthmx}{\reptitle}
\newenvironment{repthm}[1]{%
 \def\reptitle{Theorem \ref*{#1}}%
 \begin{repthmx}}%
 {\end{repthmx}}
\newtheorem*{repcorx}{\reptitle}
 {\end{repcorx}}
\crefname{section}{Section}{Sections}
\crefname{theorem}{Theorem}{Theorems}
\renewenvironment{proof}[1][\proofname] {\par\pushQED{\qed}\normalfont\topsep6\p@\@plus6\p@\relax\trivlist\item[\hskip\labelsep\bf#1\@addpunct{.}]\ignorespaces}{\popQED\endtrivlist\@endpefalse}
\newcommand{\blank}{\mbox{\hspace{3pt}\underline{\ \ }\hspace{2pt}}}
\newcommand{\sprime}{^{\prime}}
\newcommand{\pbs}{\scalebox{1.5}{\rlap{$\cdot$}$\lrcorner$}}
\newcommand{\pos}{\rotatebox[origin=c]{180}{\pbs}}
\newcommand{\bBox}{\mathbin{\Box}}
\title{Univalence and completeness of Segal objects}
\author{Raffael Stenzel\thanks{The author acknowledges the support of the Grant Agency of the Czech Republic under the
grant 19-00902S.}}
\renewcommand\footnotemark{}
\newcommand{\Address}{{
  \bigskip
  \footnotesize

\textsc{Department of Mathematics and Statistics, Masaryk University, Kotl\'{a}\v{r}sk\'{a} 2, Brno 61137, Czech Republic}\par\nopagebreak
  \textit{E-mail address}: \texttt{stenzelr@math.muni.cz}
}}
\begin{document}
\maketitle
\abstract{
Univalence, originally a type theoretical notion at the heart of Voevodsky's Univalent Foundations Program, 
has found general importance as a higher categorical property that characterizes descent and hence classifying maps in
$(\infty,1)$-categories. Completeness is a property of Segal spaces introduced by Rezk that characterizes those Segal 
spaces which are $(\infty,1)$-categories. In this paper, first, we make rigorous an analogy between univalence and 
completeness that has found various informal expressions in the higher categorical research community to date, and 
second, study its ramifications. 

The core aspect of this analogy can be understood as a translation between internal and external notions, 
motivated by model categorical considerations of Joyal and Tierney. As a result, we characterize the internal notion of 
univalence in logical model categories by the external notion of completeness defined as the right Quillen condition of suitably 
indexed Set-weighted limit functors.

Furthermore, we extend the analogy and show that univalent completion in the sense of van den Berg and Moerdijk 
translates to Rezk-completion of associated Segal objects as well. Motivated by these correspondences, we 
exhibit univalence as a homotopical locality condition whenever univalent completion exists.
}

\section{Introduction}
Complete Segal spaces as models for $(\infty,1)$-category theory were introduced by Charles Rezk in \cite{rezk}. 
Segal spaces in general can be thought of as homotopy-coherent compositional structures enriched in the homotopy theory 
of spaces. By definition, they are simplicial spaces $X\colon\Delta^{op}\rightarrow\mathbf{S}$ equipped with a 
contractible space of composition operations
\begin{align}\label{equintro1}
\circ\colon X_1\times_{X_0}X_1\rightarrow X_1
\end{align}
where $X_0$ is to be thought of as the space of objects, $X_1$ as the space of arrows and the higher $X_n$ as the spaces 
of higher compositions. A Segal space $X$ is said to be complete if the homotopical structure on $X_0$ induced by the 
internally invertible arrows in $X$, and the canonical homotopical structure on $X_0$ induced by the ambient homotopy 
theory of $\mathbf{S}$, coincide. In more detail, this means that a suitably defined object $X^{\simeq}$ of internal 
equivalences in $X$ is a path-object for $X_0$.

Rezk has constructed two model structures S and CS on the category $\mathbf{S}^{\Delta^{op}}$ of simplicial spaces whose 
fibrant objects are exactly the Segal spaces and complete Segal spaces, respectively. Joyal and Tierney have 
shown in \cite{jtqcatvsss} that the model category $(\mathbf{S}^{\Delta^{op}},\mathrm{CS})$ is Quillen equivalent to the 
Joyal model structure $(\mathbf{S},\mathrm{QCat})$ for quasi-categories in a canonical way. Joyal and Tierney's 
construction of the Quillen equivalence to $(\mathbf{S},\mathrm{QCat})$ has been generalized to ``theories 
of $(\infty,1)$-categories'' by To\"{e}n in \cite{toeninftycats}. All these model categorical results about complete 
Segal spaces rely on a specific technical definition of completeness via choice of a suitably defined object of 
internal equivalences as mentioned in the paragraph above. This is the definition we will use to define complete Segal 
objects in general model categories.

Therefore, we note that the Segal conditions that induce the horizontal composition operation in (\ref{equintro1}) can 
be expressed for simplicial objects in any model category $\mathbb{M}$ in a straight-forward fashion. We thereby 
obtain a notion of Segal objects in all model categories. All such Segal objects $X$ (in fact all simplicial objects in
$\mathbb{M}$) can be \emph{externalized} by computing their weighted limit functor together with (the opposite of) their 
associated $\Delta^{op}$-indexed cotensor
\begin{align}\label{equintrocplt}
X/\blank\colon\mathbb{M}^{op}\rightarrow\mathbf{S}.
\end{align}
Joyal and Tierney's result in \cite[4]{jtqcatvsss} states that the horizontal projection
\[\iota_1^{\ast}\colon\mathbf{S}^{\Delta^{op}}\rightarrow\mathbf{S}\]
yields a Quillen pair -- and, in fact, a Quillen 
equivalence -- between $(\mathbf{S}^{\Delta^{op}},\mathrm{CS})$ and $(\mathbf{S},\mathrm{QCat})$, but that it does not 
do so when defined on $(\mathbf{S}^{\Delta^{op}},\mathrm{S})$ only. This can be rephrased by stating that a Segal space 
$X$ is complete if and only if its associated cotensor (\ref{equintrocplt}) yields a right Quillen functor of the form
\[X/\blank\colon(\mathbf{S},\mathrm{Kan})^{op}\rightarrow(\mathbf{S},\mathrm{QCat}),\]
where $\mathrm{Kan}$ denotes Quillen's standard model structure for Kan complexes.
We will define complete Segal objects in general model categories $\mathbb{M}$ to be those Segal 
objects $X$ such that their associated cotensor (\ref{equintrocplt}) yields a right Quillen functor into the model 
category $(\mathbf{S},\mathrm{QCat})$. This \emph{external} notion defined by a 1-categorical weighted limit appears to 
be model categorical in nature and as such not necessarily to be tangible on underlying $(\infty,1)$-categories. Yet it is the notion 
that underlies not only the fundamental results of Joyal and Tierney, as well of To\"{e}n more generally, but also Rezk's 
construction of the model structure for complete Segal spaces in the first place (\cite[Theorem 6.2, 12]{rezk}). The same holds for 
Barwick's construction of the model structures for $n$-fold complete Segal spaces (\cite[2.3]{barwickthesis}).
We will thus further motivate and study this notion of completeness in Section~\ref{seccompleteness} in a general model 
category. In fact, we will see that each of the three partial conditions on a simplicial object $X$ in $\mathbb{M}$ -- Reedy fibrancy, 
the Segal conditions and completeness -- corresponds to a preservation property of the cotensor (\ref{equintrocplt}) with respect to a 
well-known class of maps in $\mathbf{S}$, see Proposition~\ref{lemmarQuillenchar}.

An \emph{internal} notion of completeness for Segal objects in model categories, or in left exact
$(\infty,1)$-categories in general, however is obtained by a natural generalization of univalence, by which we mean the 
following.

Univalence is a predicate in the internal language of type theoretic fibration 
categories $\mathbb{C}$. Every fibration $p\colon E\twoheadrightarrow B$ in $\mathbb{C}$ comes with an 
associated family of types $\mathrm{Equiv}(p)\twoheadrightarrow B\times B$ that consists of fiberwise homotopy 
equivalences $E(b)\rightarrow E(b\sprime)$ in $\mathbb{C}$ parametrized by pairs of objects $b,b\sprime:B$. A fibration
$p\colon E\twoheadrightarrow B$ is \emph{univalent} if its associated type $\mathrm{Equiv}(p)$ is a path 
object for the base $B$. Voevodsky's Univalence Axiom (\cite[Section 2.10]{hott}) states that the Tarski type universes 
$\tilde{U}\twoheadrightarrow U$ are univalent (whenever they exist). In syntactic terms, the axiom states that the 
canonical path-transport map
\[\big(A=_{U}B\big)\rightarrow \big(A\simeq B\big)\]
is an equivalence for all pairs of types $A,B$ (up to a slight abuse of the syntax). In this diction, generally, 
a type family $p\colon E\twoheadrightarrow B$ is univalent if and only if the canonical transport maps of the form
\begin{align}\label{equintro2}
tr_p(b,b\sprime)\colon\big(b=_B b\sprime\big)\rightarrow \big(E(b)\simeq E(b\sprime)\big)
\end{align}
are equivalences.
In Section~\ref{secunivsegal} we generalize this notion of univalence for fibrations in such categories $\mathbb{C}$ to 
a notion of univalence for Segal objects\footnote{For the sake of this generalization, the notion of Segal objects in 
this paper will assume a fibrancy condition that is weaker than Reedy fibrancy. It is hence slightly more general than 
the definition of Segal objects as known from the standard references.}
$X$ in $\mathbb{C}$ via a correspondingly defined object of internal 
equivalences. This internally defined notion of univalence is what in recent years often is meant by completeness in the 
literature. For the purposes of this paper, we will call this property univalence however and reserve the name 
completeness for the strict and external notion motivated above.

The aim of Section~\ref{secunivvscompl} therefore is to compare univalence and completeness accordingly defined and 
generalized in categories that combine the frameworks of Section~\ref{secunivsegal} and Section~\ref{seccompleteness}. 
These are the logical model categories (with an additional commonly assumed cofibrancy condition). The resulting 
juxtaposition of the internal and the external, or the weak and the strict, will boil down to the comparison of two 
types of equivalences associated to Segal objects in such model categories. We thus prove the following theorem and 
define all the notions involved. 

\begin{repthm}{intc=simpc}
Let $X$ be a Segal object in a logical model category $\mathbb{M}$ in which all fibrant objects are cofibrant. 
Then $X$ is univalent if and only if its Reedy fibrant replacement $\mathbb{R}X$ is complete, by which we mean that its 
$\Delta^{op}$-indexed cotensor yields a right Quillen functor of the form
\[\mathbb{R}X/\blank\colon\mathbb{M}^{op}\rightarrow(\mathbf{S},\mathrm{QCat}).\]
In particular, if $X$ is Reedy fibrant, then $X$ is univalent if and only if it is 
complete.
\end{repthm}
It follows that a fibration $p$ in the category $\mathbb{C}:=\mathbb{M}^f$ of fibrant objects is univalent 
in the syntactic sense (\ref{equintro2}) if and only if the Reedy fibrant replacement $X_p$ of its internal nerve $N(p)$ 
is complete in the above sense.

In Section \ref{secunivcompl} we extend the assignment $p\mapsto X_p$ to a functor of 
relative categories, defining a notion of BM-equivalences between fibrations inspired by \cite{univalentcompletion} 
and a notion of DK-equivalences between Segal objects inspired by \cite{rezk}, respectively.
This functor $X$ is then shown to preserve and reflect the respective completion procedures; that is, univalent 
completion of a fibration $p$ on the one hand -- originally introduced in \cite{univalentcompletion} in the case of Kan 
fibrations of simplicial sets -- and Rezk-completion of Segal objects on the other. Rezk has shown that complete Segal 
spaces are exactly those Segal spaces local with respect to all DK-equivalences, a result that was generalized by 
Barwick to a large class of model categories. Analogously, subject to some technical conditions, we show in
Section~\ref{secsubBMloc} that univalent fibrations are exactly those fibrations local with respect to all
BM-equivalences whenever univalent completion exists.

\begin{relatedwork*}
As the two notions of univalence and Rezk-completeness are often identified in the literature, it may be 
worth to point out the following. If one chooses to define completeness in terms of univalence,
then the reader may interpret the results of the first part of this paper 
as a characterization of completeness -- now an internal notion defined via some homotopy limit --
\begin{enumerate}
\item via a strict 1-categorical limit that is used in model categorical considerations such as those of Rezk and 
Barwick.
\item as the right Quillen condition of associated cotensors, motivated by the results of Joyal and Tierney.
\end{enumerate}

A connection of Rezk-completeness and univalence via a nerve construction $p\mapsto N(p)$ has been studied in 
\cite[Section 6]{rasekh} as well. In that paper, Rasekh develops a theory of complete Segal objects in
$(\infty,1)$-categories and \emph{defines} univalence of a map $p$ in a locally cartesian closed $(\infty,1)$-category
$\mathcal{C}$ as completeness of its associated Segal object $\mathcal{N}(p)$ (\cite[Definition 6.24]{rasekh}).
This $(\infty,1)$-categorical notion of completeness for Segal objects corresponds to what we call univalence of Segal 
objects via Proposition~\ref{defunivalenceso}. The identification of univalence of a fibration $p$ and univalence of 
its associated Segal object $N(p)$ in a type theoretic fibration category is justified by
Lemma~\ref{lemmasimpunivalence}. The fact that the $(\infty,1)$-categorical notion of completeness -- i.e.\ univalence -- coincides 
with the model categorical notion of completeness whenever $\mathcal{C}$ underlies a logical model category follows from 
Theorem~\ref{intc=simpc}. For this particular correspondence in fact right-properness is enough, see Remark~\ref{remrpmodcats}.

In \cite{aksrezkcompl}, the authors introduce a notion of Rezk-completion of precategories to categories (say, univalent 
precategories) in the syntax of Homotopy Type Theory. The definition of univalence for precategories naturally 
generalizes the definition of univalence for type families, and the associated notion of Rezk-completion is in spirit 
closely related to what we consider in Section~\ref{secsubrezkcompl}. Yet note that the definition of a precategory in 
Homotopy Type Theory is not the same as the definition of a Segal object in a type theoretic fibration category (see 
Remark~\ref{remintrezkcompl} for some more detail).
\end{relatedwork*}

\begin{acknowledgments*}
The author would like to thank his advisor Nicola Gambino for pointing him to a formal comparison of univalence and 
Rezk-completeness as worked out in Section~\ref{secunivvscompl} which lies at the core of this paper. Its implementation 
followed a discussion of his with Richard Garner on this issue at the Logic Colloquium 2016 in Leeds. The author is 
also grateful to John Bourke, Denis-Charles Cisinski, Peter Lumsdaine, Tim Porter, Nima Rasekh, Christian Sattler and 
Karol Szumi\l{}o for helpful comments and discussions. He is particularly grateful to the anonymous referee whose comments improved 
the exposition of results considerably. Large portions of this paper were part of the author's PhD 
thesis, supported by a University of Leeds 110 Anniversary Research Scholarship. It was expanded and finished 
with support of the Grant Agency of the Czech Republic under the grant 22-02964S.
\end{acknowledgments*}

\section{Univalence of simplicial objects}\label{secunivsegal}

In this section we discuss a notion of univalence for simplicial objects in type theoretic fibration 
categories $\mathbb{C}$ that naturally generalizes the notion of univalence for fibrations in $\mathbb{C}$. Thus, 
first, we explain what we mean by univalence for simplicial objects in $\mathbb{C}$, second, associate to a fibration
$p$ in $\mathbb{C}$ its nerve $N(p)\in\mathbb{C}^{\Delta^{op}}$, and third, show that $p$ is a univalent fibration in
$\mathbb{C}$ if and only if $N(p)$ is a univalent simplicial object in $\mathbb{C}$. 

Throughout this section $\mathbb{C}$ is a fixed type theoretic fibration category in the sense of
\cite[Definition 2.1]{shulmaninv}) defined as follows. 
\begin{definition}\label{defttfibcat}
A \emph{type theoretic fibration category} is a pair $(\mathbb{C},\mathcal{F})$ where $\mathbb{C}$ is a 
category and $\mathcal{F}\subseteq\mathbb{C}$ is a wide and replete subcategory whose morphisms are called 
\emph{fibrations} -- depicted by arrows of the form $E\twoheadrightarrow B$ -- such that the following hold.
\begin{enumerate}
\item $\mathbb{C}$ has a terminal object $1$.
\item $\mathcal{F}\subseteq\mathbb{C}$ contains all morphisms with codomain $1$.
\item All pullbacks of fibrations exist and are fibrations.
\item Let $\mathcal{AC}$ be the class of morphisms with the left lifting property with respect to all 
fibrations. Elements of $\mathcal{AC}$ are called acyclic cofibrations and depicted by arrows of the form
$A\overset{\sim}{\hookrightarrow}B$. Then every morphism factors as an acyclic cofibration followed by a fibration.
\item Given an object $B$ in $\mathbb{C}$, let $\mathcal{F}/B$ denote the full subcategory of $\mathbb{C}/B$ 
whose objects are the fibrations with codomain $B$. Then for every fibration $p\colon E\twoheadrightarrow B$, the 
pullback functor $p^{\ast}\colon\mathcal{F}/B\rightarrow\mathcal{F}/E$ has a right adjoint $\prod_p$.
\end{enumerate}
\end{definition}
In other words, a type theoretic fibration category $\mathbb{C}$ is a tribe with $\pi$-clan structure in the sense of 
Joyal (\cite[2.4,3]{joyaltribes}, \cite[Definition 2.10, Lemma 2.15]{kapulkinszumilo}).

For the readers convenience, we proceed to summarise some basic facts about type theoretic fibration categories in 
general, before we continue with the definition of Segal objects therein.

\paragraph{A quick overview of the homotopy theory}
Condition 5 in Definition~\ref{defttfibcat} implies that acyclic cofibrations are stable under 
pullback along fibrations. The class of fibrations in a type theoretic fibration category $\mathbb{C}$ generates a weak 
factorization system $(\mathcal{AC},\mathcal{AC}^{\pitchfork})$ on $\mathbb{C}$ where $\mathcal{AC}^{\pitchfork}$ 
denotes the class of morphisms with the right lifting property with respect to all acyclic cofibrations. In 
particular we have $\mathcal{F}\subseteq\mathcal{AC}^{\pitchfork}$.
The existence of pullbacks along fibrations together with the existence of factorizations as required in
Definition~\ref{defttfibcat}.4 induces a notion of \emph{path-object} in type theoretic fibration categories
$\mathbb{C}$. Recall that a path-object for a fibration $E\twoheadrightarrow B$ is a factorization
$E\overset{\sim}{\hookrightarrow}P_{B}E\twoheadrightarrow E\times_{B}E$ of the diagonal
$E\rightarrow E\times_{B}E$ over $B$ into an acyclic cofibration followed by a fibration.
We will sometimes denote the left map by $r_E\colon E\overset{\sim}{\hookrightarrow}P_{B}E$.
We hence obtain a notion of \emph{right homotopy} and thereby an according notion of \emph{homotopy-equivalence} in
$\mathbb{C}$ as explained in \cite[Section 3]{shulmaninv}. We will sometimes write $f\sim g$ (or $f\sim_H g$) to denote 
that two maps $f,g\colon A\rightarrow B$ are homotopic (via a specified homotopy $H\colon A\rightarrow PB$). An
\emph{acyclic fibration} is a fibration that is also a homotopy-equivalence. A type theoretic fibration category
$\mathbb{C}$ together with its collection $\mathcal{F}$ of fibrations and its collection $\mathcal{W}$ of homotopy-equivalences forms 
a category of fibrant objects in the sense of Brown (\cite[Theorem 3.13]{shulmaninv}). In particular, acyclic fibrations are
pullback-stable, and hence so are homotopy-equivalences between fibrations in $\mathbb{C}$ by Ken 
Brown's Lemma. In particular, it follows that path-objects of fibrations in $\mathbb{C}$ are pullback-stable.

\paragraph{A quick overview of the associated syntax}
Every type theoretic fibration category $\mathbb{C}$ comes canonically equipped with the structure of a full comprehension category (\cite{jacobscompcats})
\[\xymatrix{
\mathcal{F}\ar@{^(->}[r]\ar@/_/@{->>}[dr] & \mathbb{C}^{[1]}\ar@{->>}[d] \\
 & \mathbb{C}
}\]
denoted in short by $\mathbb{C}$ as well, where $\mathcal{F}$ consists of the fibrations in $\mathbb{C}$. In fact, as 
shown in \cite[Section 4.2]{shulmaninv}, every type theoretic fibration category $\mathbb{C}$ has an internal type 
theory $\mathcal{T}_{\mathbb{C}}$ that supports dependent product types, dependent function types and identity types, 
together with an interpretation of $\mathcal{T}_{\mathbb{C}}$ in $\mathbb{C}$. More precisely, by the results of 
\cite{lwlocaluniv}, there is a \emph{split} full comprehension category
\[\xymatrix{
\mathcal{F}_!\ar@{^(->}[r]\ar@/_/@{->>}[dr] & \mathbb{C}^{[1]}\ar@{->>}[d] \\
 & \mathbb{C}
}\]
denoted by $\mathbb{C}_!$ such that
\begin{enumerate}
\item there is an equivalence $\mathbb{C}_!\rightarrow\mathbb{C}$ of full comprehension categories which is the 
identity on the underlying fibered category $\mathbb{C}^{[1]}\twoheadrightarrow\mathbb{C}$, and
\item there is a strict interpretation functor
$\llbracket\cdot\rrbracket\colon\mathcal{T}_{\mathbb{C}}\rightarrow\mathbb{C}_!$ that preserves dependent product types, 
dependent function types and identity types.
\end{enumerate}
Contexts $\vdash(b:B)\text{ }\mathtt{ctx}$ of length 1 in the type theory $\mathcal{T}_{\mathbb{C}}$ -- presented as a 
contextual category (\cite{cartmell}) -- are given by 
objects $B$ in $\mathbb{C}$ with $\llbracket b:B\rrbracket=B$. A type family $E$ in a context $\Gamma=(b_1:B_1,\dots,b_n:B_n)$, 
denoted in the syntax by
\[\Gamma\vdash E\text{ }\mathtt{type},\]
or at times by $x:\Gamma\vdash E(x)\text{ }\mathtt{type}$ to highlight free variables, corresponds to a span
\[\xymatrix{
 & E\ar@{->>}[d]^p \\
\llbracket\Gamma\rrbracket\ar[r]_{f} & B \\
}\]
in $\mathbb{C}$, where $p\colon E\twoheadrightarrow B$ is a fibration.\footnote{The name $f$ and the 
fibration $p$ are left implicit in the notation. More carefully, one would denote the corresponding type family by
$\Gamma\vdash (f,p) \text{ }\mathtt{type}$.} It hence yields a context extension of the form
\[\vdash (\Gamma,e:E)\text{ }\mathtt{ctx}\]
such that the display map $(\Gamma,e:E)\twoheadrightarrow\Gamma$ is the pullback $f^{\ast}p$ under
$\llbracket\cdot\rrbracket$ in $\mathbb{C}_!$. If we denote $\Gamma_m=(b_1:B_1,\dots,b_m:B_m)$ for $m\leq n$, we thus obtain a 
sequence of fibrations $\llbracket\Gamma\rrbracket\twoheadrightarrow\llbracket\Gamma_{n-1}\rrbracket\twoheadrightarrow\dots\twoheadrightarrow B_1$ in $\mathbb{C}$. A 
context of the form $(b_1:B,b_2:B)$ will be abbreviated by $b_1,b_2:B$. A general morphism $\Gamma\rightarrow\Delta$ of contexts in $\mathcal{T}_{\mathbb{C}}$ is just a morphism
$\llbracket\Gamma\rrbracket\rightarrow\llbracket\Delta\rrbracket$ in $\mathbb{C}$. In particular, a term 
\[\Gamma\vdash t:E\]
is given by a section $t$ of the fibration $f^{\ast}p\colon f^{\ast}E\twoheadrightarrow\llbracket\Gamma\rrbracket$ in
$\mathbb{C}$. Under this interpretation of the structural objects, we have the following notation and correspondences of 
the logical rules in $\mathbb{C}$.
\begin{enumerate}
\item The type family obtained by substitution of a context morphism $t\colon\Gamma\rightarrow\Delta$ in a type family
$\Delta\vdash E\text{ }\mathtt{type}$ is denoted by
\[\Gamma\vdash E(t)\text{ }\mathtt{type}.\]
The interpretation $\llbracket\Gamma,e:E(t)\rrbracket$ in $\mathbb{C}$ is given by the pullback
\[\xymatrix{
\llbracket t\rrbracket^{\ast}\llbracket\Delta,e:E\rrbracket\ar[r]\ar@{->>}[d]\ar@{}[dr]|(.3){\pbs} & \llbracket\Delta,e:E\rrbracket\ar@{->>}[d] \\
\llbracket\Gamma\rrbracket\ar[r]_(.4){\llbracket t\rrbracket} & \llbracket\Delta\rrbracket.
}\]
\item The $\sum$-type family (of dependent pairs) associated to a type family
$\Gamma,b:B\vdash E\text{ }\mathtt{type}$ is denoted by
\[\Gamma\vdash\sum_{b:B}E(b)\text{ }\mathtt{type}.\]
The interpretation of the associated context extension in $\mathbb{C}$ is given by the composition of fibrations
\[\llbracket\Gamma,b:B,e:E\rrbracket\twoheadrightarrow\llbracket\Gamma,b:B\rrbracket\twoheadrightarrow\llbracket\Gamma\rrbracket.\] 
In the special case $\Gamma\vdash E\text{ }\mathtt{type}$ is a type family already, the $\sum$-type family
$\Gamma\vdash\sum_{b:B}E$ (obtained by syntactic Weakening) is simply referred to as the product type family 
of $B$ and $E$ in context $\Gamma$, and often also denoted by $B\times E$. In $\mathbb{C}$, the dependent 
sum $\llbracket\Gamma,(b,e):B\times E\rrbracket$ is a product
$\llbracket\Gamma,b:B\rrbracket\times\llbracket\Gamma,e:E\rrbracket$ in
$\mathbb{C}/\llbracket\Gamma\rrbracket$.
\item The $\prod$-type family (of dependent functions) associated to a type family as in 2.\ is denoted by
\[\Gamma\vdash\prod_{b:B}E(b)\text{ }\mathtt{type}.\]
The interpretation of its associated context extension in $\mathbb{C}$ is given by the dependent product functor 
associated to the fibration $\llbracket\Gamma,b:B\rrbracket\twoheadrightarrow \llbracket\Gamma\rrbracket$ 
applied to the fibration
$\llbracket\Gamma,b:B,e:E\rrbracket\twoheadrightarrow\llbracket\Gamma,b:B\rrbracket$.

In the special case $\Gamma\vdash E\text{ }\mathtt{type}$ is a type family already, the dependent function type family
$\Gamma\vdash\prod_{b:B}E$ (obtained by syntactic Weakening) is simply referred to as the function type 
family from $B$ to $E$ in context $\Gamma$, and often also denoted by
$\Gamma\vdash B\rightarrow E\text{ }\mathtt{type}$. In $\mathbb{C}$, the corresponding dependent product
$\llbracket \Gamma,f:B\rightarrow E\rrbracket$ is an internal hom-object for
$\llbracket\Gamma,b:B\rrbracket\twoheadrightarrow\llbracket\Gamma\rrbracket$ and
$\llbracket\Gamma,e:E\rrbracket\twoheadrightarrow \llbracket\Gamma\rrbracket$ in
$\mathbb{C}/\llbracket\Gamma\rrbracket$, and hence often also denoted as
$[\llbracket\Gamma,b:B\rrbracket,\llbracket\Gamma,e:E\rrbracket]_{\mathbb{C}/\llbracket\Gamma\rrbracket}$.
\item The identity type family associated to a type family $\Gamma\vdash E\text{ }\mathtt{type}$ is denoted by
\[\Gamma,e_1:E,e_2:E\vdash e_1=_{E} e_2\text{ }\mathtt{type}.\]
The interpretation of its associated context extension is given by a suitably chosen path-object fibration
$P_{\llbracket\Gamma\rrbracket}(\llbracket\Gamma,e:E\rrbracket)\twoheadrightarrow \llbracket\Gamma,e:E\rrbracket\times_{\llbracket\Gamma\rrbracket} \llbracket\Gamma,e:E\rrbracket$ in $\mathbb{C}$.
\end{enumerate}

From these type formers, we may build the compound type family of equivalences 
associated to a type family $\Gamma,b:B\vdash E\text{ }\mathtt{type}$ as follows. A function
$\Gamma\vdash f\colon E(a)\rightarrow E(b)$ for terms $\Gamma\vdash a:B$, $\Gamma\vdash b:B$ is left (right) invertible if there is a 
term of type
\begin{align*}
\Gamma\vdash\mathrm{Linv}(f):\equiv\sum_{g:E(b)\rightarrow E(a)}\big(gf=_{E(a)\rightarrow E(a)}1_{E(a)}\big)
\text{ }\mathtt{type},\\
\Gamma\vdash\mathrm{Rinv}(f):\equiv\sum_{h:E(b)\rightarrow E(a)}\big(fh=_{E(b)\rightarrow E(b)}1_{E(b)}\big)
\text{ }\mathtt{type},
\end{align*}
respectively. Here, the composition $gf$ is defined by a basic $\lambda$-abstraction of the form $\lambda e.g(f(e))$
(\cite[Section 1.2]{hott}). The identity $1_{E(b)}$ is given by a canonical function term of the form $\lambda e.e$.
Accordingly, a function $\Gamma\vdash f\colon E(a)\rightarrow E(b)$ is an equivalence if there is a term of type
\[\Gamma\vdash\mathrm{Biinv}(f):\equiv\mathrm{Linv}(f)\times\mathrm{Rinv}(f)\text{ }\mathtt{type}.\]
The type family of equivalences associated to $\Gamma,b:B\vdash E\text{ }\mathtt{type}$ is thus given by
\begin{align}\label{equequivtypesyn}
\Gamma,a:B,b:B\vdash\mathrm{Equiv}(E):\equiv\sum_{f:E(a)\rightarrow E(b)}\mathrm{Biinv}(f)\text{ }\mathtt{type}.
\end{align}
For any such type family $\Gamma,b:B\vdash E\text{ }\mathtt{type}$, there is a canonical function
\begin{align}\label{equidtoequiv}
\Gamma,a:B,b:B\vdash \mathrm{idtoequiv}_E:\left(a=_B b\rightarrow\mathrm{Equiv}(E)(a,b)\right)
\end{align}
obtained by \emph{path induction} (\cite[Section 1.12]{hott}, classically referred to as J-elimination) applied to the function
$\Gamma\vdash\lambda(a,a,\mathrm{refl}_a).w(1_{E(a)}):\mathrm{Equiv}(E)(a,a)$, where $w(1_{E(a)})$ is the canonical 
proof that the identity is an equivalence. 
\begin{definition}[{\cite[Definition 3.1.3]{klvsimp}}]\label{defunivalencesyn}
A type family $\Gamma,b:B\vdash E\text{ }\mathtt{type}$ is \emph{univalent} if the function (\ref{equidtoequiv}) itself 
is an equivalence. That is, if the type family
\[\Gamma,a:B,b:B\vdash\mathrm{Biinv}(\mathrm{idtoequiv}_E(a,b))\text{ }\mathtt{type}\]
is inhabited.
\end{definition}
Univalence in the sense of Definition~\ref{defunivalencesyn} of the Tarski universal type families
$\Gamma\vdash\mathcal{U}\text{ }\mathtt{type}$  is exactly Voevodsky's Univalence Axiom (\cite[Section 2.10]{hott}) whenever they
exist.
\begin{remark}\label{remsyntosemequiv}
Given a map $f\colon E_1\rightarrow E_2$ between fibrations over an object $B$ in $\mathbb{C}$, the associated 
type family $b:B\vdash \mathrm{Biinv}(f(b))\text{ }\mathtt{type}$ is inhabited if and only if the map $f$ is a homotopy-equivalence in 
$\mathbb{C}$ over $B$ (see \cite[Section 5]{shulmaninv}).
\end{remark}
This subsumes the essential corner points of the syntax and semantics associated to a type theoretic fibration category 
$\mathbb{C}$ which underlie the constructions in this section. The rest of this paper is largely syntax-free. The 
compound categorical interpretation of the type of equivalences (\ref{equequivtypesyn}) associated to a type family in
$\mathcal{T}_{\mathbb{C}}$, as well as the interpretation of univalence correspondingly, will be subject matter of the 
rest of this section, embedded in a more general treatment of Segal objects in $\mathbb{C}$ and the type family of 
internal equivalences thereof.

\paragraph{Segal objects in a type theoretic fibration category}
For natural numbers $i<n$ let $j^i$ be the ``$i$-th essential edge'' of $[n]$ in the simplex category $\Delta$. That 
means, $j^i\colon[1]\rightarrow[n]$ is the morphism given by $k\mapsto k+i$.
Let $s\mathbb{C}=\mathbb{C}^{\Delta^{op}}$ be the category of simplicial objects in $\mathbb{C}$.
Then, given a simplicial object $X$ in $\mathbb{C}$ for which the 1-boundaries $d_i\colon X_1\rightarrow X_0$,
$i\in\{0,1\}$, are fibrations, the $n$-th \emph{Segal map}
\[\xi_n\colon X_n\rightarrow X_1\times_{X_0}\dots\times_{X_0}X_1\]
associated to $X$ is the natural map associated to the cone
\[\xymatrix{
& & & X_n\ar@/_1pc/[dlll]_{(j^0)^{\ast}}\ar@/_/[dl]^{(j_1)^{\ast}} \ar@/^1pc/[drrr]^{(j^{n-1})^{\ast}}\ar@/^/@{-->}[dr] & & & \\
X_1\ar[dr]_{d_0} & &  X_1\ar[dl]^{d_1}\ar[dr]_{d_0} & & \dots\ar@{-->}[dl]^{d_1}\ar@{-->}[dr]_{d_0} & & X_1.\ar[dl]^{d_1}\\
 & X_0 & & X_0 & & X_0 & 
 }\]

\begin{definition}\label{defsegalob}
Let $X$ be a simplicial object in $\mathbb{C}$.
\begin{enumerate}
\item $X$ is \emph{sufficiently fibrant} if both the 2-Segal map $\xi_2\colon X_2\rightarrow X_1\times_{X_0}X_1$
and the boundary map $(d_1,d_0)\colon X_1\rightarrow X_0\times X_0$
are fibrations in $\mathbb{C}$.
\item Let $X$ be sufficiently fibrant. We say that $X$ is a \emph{Segal object} (\emph{strict Segal object}) if the 
associated Segal maps $\xi_n\colon X_n\rightarrow X_1\times_{X_0}\dots\times_{X_0}X_1$ are homotopy-equivalences 
(isomorphisms) in $\mathbb{C}$.
\end{enumerate}
\end{definition}

In contrast to the common definitions of Segal objects in the literature (e.g.\ \cite{brncatcomp}), we 
purposely do not assume Reedy fibrancy in the definition of Segal objects here. Therefore, for instance in the case when
$\mathbb{C}$ is the category of Kan complexes and Kan fibrations, Segal spaces are exactly the Reedy fibrant Segal 
objects.

\begin{remark}\label{remsegalhmtpy}
Whenever a simplicial object $X$ is sufficiently fibrant, the ordinary pullbacks
$X_1\times_{X_0}\dots\times_{X_0} X_1$ in the codomain of the Segal maps are homotopy-pullbacks. Hence, being a Segal object is a 
homotopy-invariant property among sufficiently fibrant simplicial objects.
\end{remark}

Thus, a sufficiently fibrant simplicial object $X$ in $\mathbb{C}$ gives rise to a type family $X_1(x,y)$ of edges 
fibred over pairs of objects $x,y$ in $X$, and a type $X_2(f,g)$ of fillers fibred over pairs of concatenable 
edges $f,g$ in $X_1$. The definition of sufficient fibrancy is chosen in such a way that, first, all properties 
considered in this paper are stable under homotopy-equivalence between sufficiently fibrant objects, and second, the 
following example -- which in general is not Reedy fibrant (see Remark~\ref{remnervenotreedy}) -- is sufficiently 
fibrant as well.
\paragraph*{The nerve of a fibration}
Let $p\colon E\twoheadrightarrow B$ be a fibration in $\mathbb{C}$ and denote the internal hom-object
$[\pi_1^{\ast}E,\pi_2^{\ast}E]_{B\times B}$ by $\mathrm{Fun}(p)$. This is the type of fiberwise functions associated to 
$p$. It comes together with a source and target fibration 
\begin{align}\label{equnervegraph}
\mathrm{Fun}(p)\overset{(s,t)}{\twoheadrightarrow} B\times B
\end{align}
and a reflexivity map $r\colon B\rightarrow\mathrm{Fun}(p)$ representing the identity on $E$. In particular, the triple 
$(s,t,r)$ forms a reflexive graph in $\mathbb{C}$.

In terms of the internal type theory of $\mathbb{C}$, the type $\mathrm{Fun}(p)$ corresponds to the parametrized function 
type
\begin{align}\label{equfcttype}
\vdash\sum_{a,b:B}(E(a)\rightarrow E(b))\text{ }\texttt{type}
\end{align}
with corresponding source and target maps specified by the terms $\lambda (a,b,f).a$ and $\lambda (a,b,f).b$. The 
reflexivity map is specified by $\lambda b.(b,b,\mathrm{id}_{E(b)})$.

Recall, e.g.\ from \cite[3]{horel}, that the category $\mathbb{C}/(B\times B)$ of \emph{internal graphs} with vertices 
in $B$ has a tensor product $\otimes_B$ given by pullback
\[\xymatrix{
(E\xrightarrow{(s,t)}B^2)\otimes_B(E\sprime\xrightarrow{(s\sprime,t\sprime)}B^2)\ar[r]\ar[d]\ar@{}[dr]|(.3){\pbs} & E\sprime\ar[d]_{s\sprime}\ar@/^/[ddr]^{t\sprime} & \\
E\ar@/_/[drr]_{s}\ar[r]_{t} & B & \\
 & & B. \\
}\]
The monoids in the monoidal category $(\mathbb{C}/(B\times B),\otimes_B)$ are the internal categories in 
$\mathbb{C}$ over $B$, the category of these monoids is denoted by $\mathrm{ICat}(\mathbb{C})_B$. The assignment 
$B\mapsto\mathrm{ICat}(\mathbb{C})_B$ induces a pseudo-functor
$\mathrm{ICat}\colon\mathbb{C}^{op}\rightarrow\mathbf{Cat}$. The Grothendieck construction applied to this
pseudo-functor is the category $\mathrm{ICat}(\mathbb{C})$ of internal categories in $\mathbb{C}$. The following 
proposition is a straightforward generalisation of \cite[Example 7.1.4.(ii)]{jacobsttbook} to type theoretic fibration 
categories. Although the author assumes local cartesian closedness in the cited example, his construction can be carried 
out for every fibration in a type theoretic fibration category by Condition 5.\ in Definition~\ref{defttfibcat}. 

\begin{proposition}\label{propfunicat}
For a fibration $p\colon E\twoheadrightarrow B$ in $\mathbb{C}$, the graph
\[\xymatrix{\mathrm{Fun}(p)\ar@<.5ex>[r]^{s}\ar@<-.5ex>[r]_{t} &B}\]
comes equipped with a multiplication that turns this graph into an internal category object in $\mathbb{C}$ with its 
unit given by the reflexivity $r$. 
\end{proposition}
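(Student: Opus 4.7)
The plan is to construct the multiplication $m \colon \mathrm{Fun}(p) \otimes_B \mathrm{Fun}(p) \to \mathrm{Fun}(p)$ as the internal composition of fibrewise functions, mimicking the construction of the endomorphism monoid of an object in a cartesian closed category, and then to verify associativity and the unit laws with respect to $r$ by reducing them to the corresponding tautologies for function composition.

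First, I would unpack the domain. By definition of $\otimes_B$, the object $\mathrm{Fun}(p) \otimes_B \mathrm{Fun}(p)$ is the pullback of $t$ along $s$ and naturally admits a map to $B^3$ recording the source, middle, and target objects of a composable pair; its canonical projection to $B \times B$ used in the definition of the tensor product forgets the middle coordinate. Working in the slice $\mathcal{F}/B^3$, the three pullbacks $\pi_i^{\ast}E$ for $i = 1,2,3$ are fibrations by Condition 3 of Definition~\ref{defttfibcat}, and therefore the internal hom objects $[\pi_i^{\ast}E, \pi_j^{\ast}E]_{B^3}$ are available in $\mathcal{F}/B^3$ by iterated application of the partial right adjoints $\prod_p$ granted by Condition 5. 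Under these identifications, $\mathrm{Fun}(p) \otimes_B \mathrm{Fun}(p)$ is canonically isomorphic over $B^3$ to $[\pi_1^{\ast}E, \pi_2^{\ast}E]_{B^3} \times_{B^3} [\pi_2^{\ast}E, \pi_3^{\ast}E]_{B^3}$.

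Next, I would build the composition morphism
\[
[\pi_1^{\ast}E, \pi_2^{\ast}E]_{B^3} \times_{B^3} [\pi_2^{\ast}E, \pi_3^{\ast}E]_{B^3} \longrightarrow [\pi_1^{\ast}E, \pi_3^{\ast}E]_{B^3}
\]
in $\mathcal{F}/B^3$ by the standard curry--evaluate--transpose recipe of a cartesian closed category: doubly evaluate the pair of internal-hom factors against a generalized element of $\pi_1^{\ast}E$ and transpose the result back along the adjunction defining the target hom. Composing with the canonical projection from $[\pi_1^{\ast}E, \pi_3^{\ast}E]_{B^3}$ to $\mathrm{Fun}(p) = [\pi_1^{\ast}E, \pi_2^{\ast}E]_{B^2}$ over the projection $B^3 \to B^2$ forgetting the middle coordinate yields the sought map $m$ as an arrow of $\mathbb{C}$. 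Associativity and the two unit laws for $r$ then reduce, after unfolding the universal properties of the iterated internal homs, to the tautological equalities $(h \circ g) \circ f = h \circ (g \circ f)$ and $\mathrm{id} \circ f = f = f \circ \mathrm{id}$ for fibrewise functions, exactly as in the locally cartesian closed setting treated in \cite[Example 7.1.4.(ii)]{jacobsttbook}.

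The main obstacle is that $\mathbb{C}$ is not assumed to be locally cartesian closed, so one cannot transport Jacobs' argument wholesale. The key observation that resolves this is that every object appearing in the construction --- the pullbacks $\pi_i^{\ast}E$, the internal homs $[\pi_i^{\ast}E, \pi_j^{\ast}E]_{B^n}$, and all intermediate fibre products in slices --- is a fibration over the relevant power of $B$. Consequently each adjoint transposition used in defining $m$ and in verifying its axioms is an instance of the partial adjunction $p^{\ast} \dashv \prod_p$ along some fibration $p$ guaranteed by Condition 5, and Jacobs' argument carries over unchanged.
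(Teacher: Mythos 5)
Your proposal is correct and follows essentially the same route as the paper, which gives no written proof beyond citing Jacobs' construction in \cite[Example 7.1.4.(ii)]{jacobsttbook} and noting that local cartesian closedness can be replaced by the partial right adjoints along fibrations from Condition 5 of Definition~\ref{defttfibcat}. Your write-up simply makes explicit the point the paper leaves implicit, namely that every object occurring in the composition map and in the associativity and unit verifications is a fibration over a power of $B$, so each transposition is an instance of $p^{\ast}\dashv\prod_p$.
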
\qed

\begin{proposition}\label{propnervesegal}
There is a nerve functor $N\colon\mathrm{ICat}(\mathbb{C})\rightarrow s\mathbb{C}$ that, when restricted to its image, 
yields an equivalence to the subcategory of objects in $s\mathbb{C}$ whose Segal maps are isomorphisms.
\end{proposition}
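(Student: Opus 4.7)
The plan is to construct the nerve functor $N$ explicitly and then exhibit an inverse construction on the subcategory of simplicial objects with isomorphism Segal maps.

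First I would define, for an internal category $C=(\mathrm{Fun}(C)\rightrightarrows B, m, r)$, the simplicial object $N(C)$ by taking $N(C)_0 := B$, $N(C)_1 := \mathrm{Fun}(C)$, and more generally $N(C)_n$ to be the $n$-fold iterated pullback $\mathrm{Fun}(C)\otimes_B\cdots\otimes_B\mathrm{Fun}(C)$ along the source/target fibrations. These pullbacks exist because $(s,t)$ is a fibration (as $\mathrm{Fun}(p)$ is constructed via $\prod_p$ of fibrations per Proposition~\ref{propfunicat}), and Condition~3 of Definition~\ref{defttfibcat} applies. The outer face maps are induced by composition with $s$ or $t$, the inner face maps by the multiplication $m$ applied to adjacent factors, and the degeneracies by inserting $r$. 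Verifying the simplicial identities reduces to the associativity and unitality of the monoid $(\mathrm{Fun}(C),m,r)$ in $(\mathbb{C}/(B\times B),\otimes_B)$; this is the classical calculation and works formally in any category with the requisite pullbacks. By construction the Segal maps $\xi_n\colon N(C)_n\to N(C)_1\times_{N(C)_0}^S\cdots\times_{N(C)_0}^S N(C)_1$ are identities, hence isomorphisms. Functoriality in $C$ is immediate from functoriality of the iterated pullback.

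Next I would construct a quasi-inverse $M$ on the full subcategory $s\mathbb{C}_{\mathrm{ssg}}\subseteq s\mathbb{C}$ of simplicial objects whose Segal maps are isomorphisms. Given such an $X$, the graph $(d_1,d_0)\colon X_1\to X_0\times X_0$ together with the composite
\[ m_X \;:=\; \bigl(X_1\times_{X_0}X_1 \xrightarrow{\xi_2^{-1}} X_2 \xrightarrow{d_1} X_1\bigr) \]
and unit $s_0\colon X_0\to X_1$ should give an internal category $M(X)$. The monoid axioms follow from the simplicial identities applied to $X_3$ (using $\xi_3$ being an isomorphism to identify $X_3$ with the triple pullback) together with the simplicial identities $d_1 s_0 = \mathrm{id} = d_0 s_0$ and $d_1 s_1 = \mathrm{id} = d_1 s_0$ for unitality. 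Functoriality of $M$ is routine since simplicial maps automatically respect face and degeneracy operators.

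Finally, I would check that $N$ and $M$ are mutually inverse up to natural isomorphism. On the internal-category side, $M(N(C))=C$ strictly because $N(C)_n$ was defined as the strict pullback, so $\xi_2$ is the identity and $m_{N(C)}=m$. On the simplicial side, given $X\in s\mathbb{C}_{\mathrm{ssg}}$, the components of a natural isomorphism $X\cong N(M(X))$ are given in dimensions $0$ and $1$ by identities, and in dimension $n\ge 2$ by $\xi_n^{-1}$; naturality in the simplicial direction follows by induction using the definition of $m_X$ in terms of $d_1\circ\xi_2^{-1}$.

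The main obstacle, and the only nontrivial point beyond pure diagram chasing, is confirming that the required pullbacks exist and behave well in a type theoretic fibration category — but this is guaranteed by Condition~3 together with the fibrancy of $(s,t)$ built into $\mathrm{Fun}(p)$, so no homotopical subtleties arise. Everything else is the standard monad-theoretic equivalence between internal categories and strict Segal objects, transported verbatim to $\mathbb{C}$.
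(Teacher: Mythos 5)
Your proposal is correct and follows essentially the same route as the paper: define $N(C)_n$ as the $n$-fold pullback power of the underlying reflexive graph with faces/degeneracies built from the multiplication, projections, and unit, verify the simplicial identities via the monoid axioms, and recover the internal category from the $2$-truncation of a simplicial object with invertible Segal maps (your $M$ is exactly the paper's $X_{\leq 2}$ construction). The only difference is that you spell out the quasi-inverse and the natural isomorphism $X\cong N(M(X))$ in more detail than the paper, which simply asserts this as an easy computation.
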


\begin{proof}
The functor $N$ has a straightforward definition, given as the reflexive free category comonad resolution in
$\mathbb{C}$. That means, given an object $C\in\mbox{ICat}(\mathbb{C})$ with underlying reflexive graph
\begin{equation}\label{diagreflgraph}
\xymatrix{C_1\ar@<1ex>[r]^{s}\ar@<-1ex>[r]_{t} & C_0\ar[l]|{\eta}},
\end{equation}
we define 
\[N(C)_n:=\bigotimes^{n}_{C_0}(\xymatrix{C_1\ar@<-.5ex>[r]_t\ar@<.5ex>[r]^s & C_0}).\]
Here, the right hand side is the $n$-fold monoidal power of the underlying graph in $\mathbb{C}/(C_0\times C_0)$, given 
by the $n$-fold pullback $C_1\times_{C_0}\dots\times_{C_0} C_1$. The degeneracy and two boundaries between $N(C)_0$ 
and $N(C)_1$ are exactly given by the data in (\ref{diagreflgraph}).

Higher degeneracies are given by inserting the unit $\eta$ into corresponding components of the monoidal 
power and the boundaries are given by the obvious combination of multiplications and projections. One can 
verify the simplicial relations one by one and observe that they hold by the associativity and unitality laws 
satisfied by the multiplication $\mu$ and the unit $\eta$. This defines a simplicial object $N(C)$ in $\mathbb{C}$.

Given a simplicial object $X$ in $\mathbb{C}$ whose Segal maps are isomorphisms, one easily computes that its
$2$-skeleton yields an object $X_{\leq 2}\in\mathrm{ICat}(\mathbb{C})_{X_0}$ such that $N(X_{\leq 2})\cong X$.
\end{proof}

\begin{remark}
In the case $\mathbb{C}$ is the fibration category of Kan complexes, the nerve functor from 
Proposition~\ref{propnervesegal} is shown in \cite{horel} to be part of an equivalence between a suitably defined 
homotopy theory on $\mathrm{ICat}(\mathbb{C}$), and the homotopy theory of complete Segal spaces. Internal category 
objects in Kan complexes hence yield another model for $(\infty,1)$-category theory.
\end{remark}

By Propositions~\ref{propfunicat} and \ref{propnervesegal} we obtain for every fibration
$p\colon E\twoheadrightarrow B$ in $\mathbb{C}$ the simplicial object
\begin{align}\label{diagdefNp}
N(p):=N\big(\xymatrix{\mathrm{Fun}(p)\ar@<.5ex>@/^/[r]^s\ar@<-.5ex>@/_/[r]_t & B\ar[l]|{r}\big)}\in s\mathbb{C}.
\end{align}
The nerve $N(p)$ is sufficiently fibrant, because the boundary
\[(d_1,d_0)\colon N(p)_1\rightarrow N(p)_0\times N(p)_0\]
is the fibration $(s,t)\colon\mathrm{Fun}(p)\twoheadrightarrow B\times B$, and the 2-Segal map $\xi_2$ is an isomorphism 
(and in particular a fibration). Therefore, the simplicial object $N(p)$ is a strict Segal object in $\mathbb{C}$ for 
every fibration $p\in\mathbb{C}$, again by Proposition~\ref{propnervesegal}. 

\begin{remark}\label{remnervenotreedy}
The nerve $N(p)$ of a fibration $p$ in a type theoretic fibration category $\mathbb{C}$ is generally not Reedy fibrant. 
For instance, consider the matching map $N(p)_2\rightarrow M_2(N(p))$, which, by definition, is isomorphic to the 
boundary map
\begin{align}\label{equremnervenotreedy}
(\pi_1,\pi_2,\circ)\colon\mathrm{Fun}(p)\otimes_B\mathrm{Fun}(p)\rightarrow(\mathrm{Fun}(p)\otimes_B\mathrm{Fun}(p))\times_{B\times B}\mathrm{Fun}(p).
\end{align}
Here, $\circ\colon\mathrm{Fun}(p)\otimes_B\mathrm{Fun}(p)\rightarrow\mathrm{Fun}(p)$ denotes the monoidal 
multiplication associated to the internal category object $\mathrm{Fun}(p)$. The map (\ref{equremnervenotreedy}) is a 
fibration if and only if it has the right lifting property with respect to all acyclic cofibrations in $\mathcal{C}$. 
Unfolding the definitions, a respective lifting problem corresponds to a diagram of the form
\begin{align}\label{diagremnervenotreedy}
\begin{gathered}
\xymatrix{
    &     &     &   & Y\ar@{->>}[dd]|\hole\ar[dr] &   \\
    & Y_1\ar@{->>}[dd]|\hole\ar[dr]\ar[urrr] &     & X\ar@/_/@{->>}[dr]|(.45)\hole\ar[rr]\ar[ur] &   & Z\ar@/^/@{->>}[dl] \\
X_1\ar@/_/@{->>}[dr]\ar[rr]\ar[ur]\ar[urrr]|(.34)\hole|\hole &     & Z_1\ar@/^/@{->>}[dl]\ar[urrr] &   & C &   \\
    & A\ar@/_/@{^(->}[urrr]^{\sim}   &     &   &   &  
}
\end{gathered}
\end{align}
where $A\overset{\sim}{\hookrightarrow}C$ is an acyclic cofibration in $\mathcal{C}$, all surfaces but the triangle of 
fibrations over $C$ commute (strictly), the vertical fibrations are pullbacks of $p$, and all five diagonally horizontal 
squares over the base $A\rightarrow C$ are pullback squares. The lifting problem associated to a diagram of the form 
(\ref{diagremnervenotreedy}) has a solution if and only if the triangle over $C$ can be shown to (strictly) commute as 
well.

In other words, this means that in order to show that a triangle of pullbacks of $p$ with base $C$ commutes, it 
suffices to show that its restriction along an acyclic cofibration with codomain $C$ commutes. Indeed, solutions to the 
associated lifting problem are unique whenever they exist because the map (\ref{equremnervenotreedy}) is a monomorphism.

To give an example of a fibration $p$ such that the matching map (\ref{equremnervenotreedy}) is not a fibration, and so 
$N(p)$ is not Reedy fibrant, let $\mathbb{C}$ be the category of Kan complexes. Let $p$ be the
Kan fibration $\pi_{\kappa}\colon\tilde{U}_{\kappa}\twoheadrightarrow U_{\kappa}$ universal for $\kappa$-small Kan 
fibrations for some regular cardinal $\kappa$ (\cite{klvsimp}). Let $J=N(I[1])$ be the walking isomorphism (see
Section~\ref{seccompleteness}) and $1\colon J\rightarrow J$ the map with constant value $1\in J$. Then the triangle of fibrations
\[\xymatrix{
 & J\times J\ar@{->>}[dd]|(.4)\hole|(.45)\hole|\hole^(.25){\pi_2}\ar[dr]^{(\mathrm{id},\pi_2)} &  \\
 J\times J\ar@/_/@{->>}[dr]_(.45){\pi_2}\ar[rr]^(.4){(((\mathrm{id},1)\pi_1),\pi_2)}\ar@{=}[ur] & & (J\times J)\times J\ar@/^/@{->>}[dl]^{\pi_2} \\
& J &   \\
}\]
does not commute strictly (as can be verified on the vertex $(0,0)\in J\times J$), but restricts along the acyclic 
cofibration $\{1\}\colon\Delta^0\hookrightarrow J$ to the commutative triangle
\[\xymatrix{
 & J\ar@{->>}[dd]|\hole\ar[dr]^(.4){(\mathrm{id},1)} & \\
J\ar@/_/@{->>}[dr]\ar[rr]^(.3){(\mathrm{id},1)}\ar@{=}[ur] &  & J\times J.\ar@/^/@{->>}[dl] \\   
 & \Delta^0 &     
}\]
Thus, we obtain a lifting problem for (\ref{equremnervenotreedy}) in $\mathbb{C}$ that has no solution. 
\end{remark}

\paragraph*{The type of internal equivalences}
Following \cite[Section 5]{shulmaninv}, for a fibration $p\colon E\twoheadrightarrow B$ one can construct the
fibration of fiberwise homotopy-equivalences $\mathrm{Equiv}(p)\twoheadrightarrow\mathrm{Fun}(p)\twoheadrightarrow B\times B$ which 
computes the interpretation of the type family of equivalences associated to $b:B\vdash E(b)\text{ }\mathtt{type}$ as defined in 
(\ref{equequivtypesyn}), and show that this type family is univalent in the sense of Definition~\ref{defunivalencesyn} 
if and only if a canonically associated reflexivity map $r\colon B\rightarrow\mathrm{Equiv}(p)$ is acyclic (see 
\cite[Sections 5 and 7]{shulmaninv}). 
But, in fact, we can define a fibration of internal equivalences $\mathrm{Equiv}(X)$ for every sufficiently fibrant 
simplicial object $X$ in $\mathbb{C}$ and hence define a notion of univalence in this generality. In the spirit of 
Joyal's definition of biinvertible functions in type theory -- i.e.\ terms of the type in (\ref{equequivtypesyn}) -- the simplicial 
structure of $X$ suggests the definition of a type family of equivalences associated to $X$ in the underlying syntax as follows.

Let $X$ be a sufficiently fibrant simplicial object in $\mathbb{C}$. In type theoretic terms, via the second Segal map 
we have a context
\[x,y,z:X_0,f:X_1(x,y),g:X_1(y,z)\vdash X_2(f,g)\text{ }\texttt{type}.\]
Then we may define a type family $x,y:X_0\vdash \mathrm{Equiv}(X)$ by $\sum$-formation of
\begin{align}\label{equequivtype}
x,y:X_0,f:X_1(x,y)\vdash\mathrm{Equiv}(x,y,f):\equiv \mathrm{Linv}(x,y,f)\times\mathrm{Rinv}(x,y,f)
\end{align}
for
\begin{align}\label{equequivtypepartials}
\mathrm{Linv}(x,y,f) & :\equiv\sum_{g:X_1(y,x)}\sum_{\sigma:X_2(f,g)} d_1\sigma=_{X_1(x,x)}s_0 x, \\
\notag \mathrm{Rinv}(x,y,f) & :\equiv\sum_{h:X_1(y,x)}\sum_{\sigma:X_2(h,f)} d_1\sigma=_{X_1(y,y)}s_0 y.
\end{align}

\begin{remark}
If $X\in s\mathbb{C}$ is a Segal object, the Segal map $\xi_2\colon X_2\twoheadrightarrow X_1\times_{X_0} X_1$ exhibits a 
(homotopically unique) section $\zeta$ with induced composition $\kappa$ 
given by $d_1\zeta\colon X_1\times_{X_0}X_1\rightarrow X_1$. We thereby obtain an equivalence of the type
$\mathrm{Equiv}X(x,y)$ to the type
\begin{align}
\sum_{f:X_1(x,y)}\Big(\sum_{g:X_1(y,x)}\big(\kappa(f,g)=_{X_1(x,x)}s_0 x\big)\times\sum_{h:X_1(y,x)}\big(\kappa(h,f)=_{X_1(y,y)}s_0 y\big)\Big)
\end{align}
in context $x,y:X_0$.
\end{remark}
Just as the map in (\ref{equidtoequiv}), by path induction on the trivial equivalence
\[x:X_0\vdash \mathrm{deg}_x:\equiv(s_0x,s_0x,s_0s_0x,\mathrm{refl}_{s_0x}),(s_0x,s_0x,s_0s_0x,\mathrm{refl}_{s_0x}):\mathrm{Equiv}(X)(x,x)\]
we obtain a canonical map of the form
\begin{align}\label{equdefunivalencesegalsyn}
x,y:X_0\vdash\mathrm{idtoequiv}_X:(x=_{X_0}y\rightarrow\mathrm{Equiv}(X)).
\end{align}

\begin{definition}\label{defequivXalt}
A sufficiently fibrant Segal object $X\in\mathbb{C}$ is \emph{univalent} if the map (\ref{equdefunivalencesegalsyn}) is an equivalence 
in context $x,y:X_0$.
\end{definition}

\begin{lemma}\label{lemmasimpunivalence}
Let $p\colon E\twoheadrightarrow B$ be a fibration in $\mathbb{C}$. Then $p$ is a univalent fibration in $\mathbb{C}$ in 
the sense of Definition~\ref{defunivalencesyn} if and only if the Segal object $N(p)$ is univalent in the sense of 
Definition~\ref{defequivXalt}.
\end{lemma}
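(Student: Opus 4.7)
The strategy is to identify, for $X = N(p)$, the simplicially defined fibration $\mathrm{Equiv}(X)\twoheadrightarrow X_1$ with the type-theoretic fibration $\mathrm{Equiv}(p)\twoheadrightarrow \mathrm{Fun}(p)$ of fiberwise biinvertible maps, and to check that under this identification the section $s_0\colon X_0\to \mathrm{Equiv}(X)$ of Definition~\ref{defunivalenceso} corresponds to the canonical reflexivity section $B\to \mathrm{Equiv}(p)$. The lemma then follows, since both formulations of univalence amount to precisely the statement that this common section be a homotopy equivalence in $\mathbb{C}$.

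For the identification, I use that by Proposition~\ref{propnervesegal} we have $N(p)_2 \cong \mathrm{Fun}(p)\times_B\mathrm{Fun}(p)$ with $d_1$ given by the internal composition $\mu$ of Proposition~\ref{propfunicat}. Substituting this into the pullback square (\ref{diagconvpair}) identifies $\Delta^{\ast}N(p)_2$ with the object of composable pairs $(f,g)$ sharing a common basepoint $x\colon B$ -- that is, with $f\colon E(x)\to E(y)$ and $g\colon E(y)\to E(x)$ -- under which the comparison map $(\Delta^{\ast}d_1,\Delta^{\ast}s)$ sends $(f,g)$ to $(\mu(f,g),r(x))$. Forming the pullback (\ref{diageqdj}) then presents $\mathrm{Inv}(N(p))$ as the object of triples $(f,g,H)$ with $H$ an internal homotopy $\mu(f,g)\sim r(x)$ in the fibre of $\mathrm{Fun}(p)$ over $(x,x)$. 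The projections $d_2$ and $d_0$ now pick out $f$ and $g$ respectively, so that $\mathrm{Linv}(N(p))\twoheadrightarrow \mathrm{Fun}(p)$ and $\mathrm{Rinv}(N(p))\twoheadrightarrow \mathrm{Fun}(p)$ are exactly the fibrations of left and right inverses of the generic fiberwise map (as spelled out in Remark~\ref{defequivXalt}); their fiberwise product over $\mathrm{Fun}(p)$ reproduces the biinvertibility presentation of $\mathrm{Equiv}(p)$.

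To compare sections, observe that $s_0\colon X_0\to \mathrm{Equiv}(X)$ sends $x\colon B$ to the doubly degenerate $2$-simplex $s_0 s_0 x\in N(p)_2$, which under the identification above is the pair $(r(x),r(x))$ with composite $\mu(r(x),r(x))=r(x)$ on the nose; the required homotopy $H$ is therefore strict reflexivity. This is precisely the canonical term witnessing that $\mathrm{id}_{E(x)}$ is biinvertible, i.e.\ the canonical reflexivity section $B\to \mathrm{Equiv}(p)$. Definition~\ref{defunivalenceso} for $N(p)$ therefore demands that this section be a homotopy equivalence in $\mathbb{C}$, which is the paper's working definition of univalence of $p$ (the passage between the biinvertible and contractibility-style presentations of equivalence being standard, cf.\ \cite[Section 5]{shulmaninv}). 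The only real work is the bookkeeping of the two pullback squares (\ref{diagconvpair}) and (\ref{diageqdj}) and the verification of compatibility of the sections; no genuine obstacle arises, because strictness of the Segal map for $N(p)$ reduces the entire comparison to a diagram chase.
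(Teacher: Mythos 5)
Your proposal is correct and follows essentially the same route as the paper: identify $\mathrm{Equiv}(N(p))$ with $\mathrm{Equiv}(p)$ over the base compatibly with the reflexivity/degeneracy sections, then read off that the two univalence conditions coincide. The paper performs this identification by substituting $N(p)$ into the syntactic presentation (\ref{equequivtype}) of Remark~\ref{defequivXalt} rather than unwinding the pullbacks (\ref{diagconvpair}) and (\ref{diageqdj}) directly, but this is the same computation in different clothing.
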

\begin{proof}
If we substitute the Segal object $N(p)$ for $X$ in (\ref{equequivtype}), via (\ref{equfcttype}) we see that the object
$\mathrm{Equiv}(N(p))$ as a type family is given up to judgemental equality in context $a,b:B$ by
\[\sum_{f:E(a)\rightarrow E(b)}\Big(\sum_{g:E(b)\rightarrow E(a)}\big(gf=_{E(a)\rightarrow E(a)}1_{E(a)}\big)\times\sum_{h:E(b)\rightarrow E(a)}\big(fh=_{E(b)\rightarrow E(b)}1_{E(b)}\big)\Big).\]
This is exactly the type family $\mathrm{Equiv}(E)$ of equivalences associated to $p$ as a type family from 
(\ref{equequivtypesyn}). As in particular $N(p)_0=B$, we see that the type judgements (\ref{equidtoequiv}) and 
(\ref{equdefunivalencesegalsyn}) coincide for $X=N(p)$. It follows that the Definitions \ref{defunivalencesyn} and 
\ref{defequivXalt} of univalence coincide in this case.
\end{proof}

In closing, we construct a categorical interpretation of the type family $\mathrm{Equiv}(X)$ for a sufficiently fibrant simplicial 
object $X$ in $\mathbb{C}$. Thus, in order to define the fibrations of left- and right-invertible maps $\mathrm{Linv}(X)$ and
$\mathrm{Rinv}(X)$ over $X_1$ respectively, we construct the object $\mathrm{Inv}(X)$ of triangles in $X_2$ whose
1-boundary is degenerate up to homotopy in $X_1$. It will serve as domain for both fibrations. Therefore, 
consider the pullbacks
\begin{align}\label{diagconvpair}
\begin{gathered}
\xymatrix{
\Delta^{\ast}X_2\ar[r]\ar@{}[dr]|(.3){\pbs}\ar@{->>}[d]_{\Delta^{\ast}d_1} & X_2\ar@{->>}[d]^{d_1}\\
\Delta^{\ast}X_1\ar@{->>}[d]_{\Delta^{\ast}(d_1,d_0)}\ar[r]^{} \ar@{}[dr]|(.3){\pbs}& X_1\ar@{->>}[d]^{(d_1,d_0)} \\
X_0\ar[r]_{\Delta} & X_0\times X_0.
}
\end{gathered}
\end{align}
The 1-boundary $d_1\colon X_2\twoheadrightarrow X_1$ and its degenerate postcomposition
$s\colon X_2\xrightarrow{d_1}X_1\xrightarrow{d_1} X_0\xrightarrow{s_0} X_1$ 
can be reindexed to a pair
\[(\Delta^{\ast}d_1,\Delta^{\ast}s)\colon\Delta^{\ast}X_2\rightarrow\Delta^{\ast}X_1\times_{X_0}\Delta^{\ast}X_1\]
which takes a triangle of the form
\begin{align}\label{diagendotriangles}
\begin{gathered}
\xymatrix{
 & y\ar@/^/[dr]^g & \\
 x\ar@/^/[ur]^f\ar@/_/[rr]_h\ar@{}[urr]_{\sigma} & & x 
}
\end{gathered}
\end{align}
to the pair $(h,s_0x)$ of parallel edges. Rather than equalizing these two maps on the nose, 
the object $\mathrm{Inv}(X)$ is defined as the object of homotopies in $X_1$ between the two maps, defined by the 
following pullback.
\begin{align}\label{diageqdj}
\begin{gathered}
\xymatrix{
\mathrm{Equalizer}(\Delta^{\ast}d_1,\Delta^{\ast}s)\ar@{}[dr]|(.3){\pbs}\ar[r]\ar[d] & \mathrm{Inv}(X)\ar@{}[dr]|(.3){\pbs}\ar@{->>}[r]\ar[d] & \Delta^{\ast}X_2\ar[d]^{(\Delta^{\ast}d_1,\Delta^{\ast}s)}\\
\Delta^{\ast}X_1\ar@{^(->}[r]^(.45){\sim}\ar@/_2pc/[rr]_{\mathrm{diag}} & P_{X_0}(\Delta^{\ast}X_1)\ar@{->>}[r] & \Delta^{\ast}X_1\times_{X_0}\Delta^{\ast}X_1
}
\end{gathered}
\end{align}
Here, the homotopy type of $\mathrm{Inv}(X)$ does not depend on any given choice of path-object
$P_{X_0}(\Delta^{\ast}X_1)$, since homotopy-equivalences in $\mathbb{C}$ between fibrations are pullback-stable.

The outer boundary maps $d_i\colon X_2\rightarrow X_1$ ($i=0,2$) factor through the 2-Segal map of $X$, and hence are 
fibrations since $X$ is sufficiently fibrant. We thus may define composite fibrations
\[\mathrm{Linv}(X)\colon\mathrm{Inv}(X)\twoheadrightarrow\Delta^{\ast}X_2\overset{d_2}{\twoheadrightarrow} X_1,\]
\[\mathrm{Rinv}(X)\colon\mathrm{Inv}(X)\twoheadrightarrow\Delta^{\ast}X_2\overset{d_0}{\twoheadrightarrow} X_1,\]
and
\begin{align}\label{defequtt}
\mathrm{Equiv}(X)\colon\mathrm{Linv}X\times_{X_1}\mathrm{Rinv}X\twoheadrightarrow X_1
\end{align}
accordingly. The degeneracy $s_0\colon X_0\rightarrow X_1$, via the composition $s_0s_0\colon X_0\rightarrow X_2$, factors 
through the strict equalizer in Diagram~(\ref{diageqdj}). We obtain a canonical lift of the form
\begin{align}\label{diagequivdeg}
\begin{gathered}
\xymatrix{
 & \mathrm{Equiv}(X)\ar@{->>}[d] \\
 X_0\ar[r]_{s_0}\ar@{-->}[ur]^{s_0} & X_1.
}
\end{gathered}
\end{align}

\begin{proposition}\label{defunivalenceso}
Let $X$ be a sufficiently fibrant simplicial object in $\mathbb{C}$. 
\begin{enumerate}
\item There is a homotopy-equivalence
\begin{align}\label{defunivalencesop1}
\begin{gathered}
\xymatrix{
X_0\ar[r]^{s_0}\ar[d]_{\llbracket \mathrm{deg}_x\rrbracket} & \mathrm{Equiv}(X)\ar@{->>}[d] \\
\llbracket x,y:X_0,w:\mathrm{Equiv}(X)\rrbracket\ar@{->>}[r]\ar[ur]^{\simeq} & X_1 
}
\end{gathered}
\end{align}
in $\mathbb{C}$ such that both triangles commute.
\item The simplicial object $X$ is univalent in the sense of Definition~\ref{defequivXalt} if and only if the canonical lift in 
(\ref{diagequivdeg}) is a homotopy-equivalence in $\mathbb{C}$.
\end{enumerate}
\end{proposition}
\begin{proof}
Following the template to interpret compound type formers as outlined in the beginning of this section, the context
$\llbracket x,y:X_0,w:\mathrm{Equiv}(X)\rrbracket$ in $\mathbb{C}$ is given by the following sequence of recursions. First, we have
a pullback square in $\mathbb{C}$ of the form
\begin{align}\label{diagdefunivalenceso1}
\begin{gathered}
\xymatrix{
\llbracket x,y:X_0,f:X_1(x,y),g:X_1(y,x)\rrbracket\ar@{->>}[r]^(.8){}\ar@{->>}[d]_{}\ar@{}[dr]|(.3){\pbs} & X_1\ar@{->>}[d]^{(d_1,d_0)} \\
X_1\ar@{->>}[r]_{(d_0,d_1)} & X_0\times X_0.
}
\end{gathered}
\end{align}
The two projections $\pi_i\colon X_0\times X_0\twoheadrightarrow X_0$ induce forgetful maps
$F_i\colon X_1\times_{X_0\times X_0}X_1\rightarrow X_1\times_{X_0}X_1$ which fit into pullback squares
\[\xymatrix{
X_1\times_{(X_0\times X_0)}\times X_1\ar[r]^(.55){F_i}\ar@{->>}[d]_{}\ar@{}[dr]|(.3){\pbs} & X_1\times_{X_0} X_1\ar@{->>}[d]^{((d_1,d_0)\pi_1,d_0\pi_2))} \\
X_0\times X_0\ar[r]_(.4){} & X_0\times X_0\times X_0,
}\]
where the bottom map is $\mathrm{id}\times \pi_1$ in the case $i=1$, and $\pi_2\times\mathrm{id}$ in the case $i=2$.
Pullback of the 2-Segal map $\xi_2\colon X_2\twoheadrightarrow X_1\times_{X_0}X_1$ along these two forgetful maps 
$F_i$ yields the fibrations
\[\llbracket x,y:X_0,f:X_1(x,y),g:X_1(y,x),\sigma:X_2(f,g)\rrbracket\twoheadrightarrow\llbracket x,y:X_0,f:X_1(x,y),g:X_1(y,x)\rrbracket\]
and
\[\llbracket x,y:X_0,f:X_1(x,y),g:X_1(y,x),\sigma:X_2(g,f)\rrbracket\twoheadrightarrow\llbracket x,y:X_0,f:X_1(x,y),g:X_1(y,x)\rrbracket,\]
respectively. A direct comparison of universal properties shows that these two pullbacks $F_i^{\ast}X_2$ are canonically 
isomorphic to the pullback $\Delta^{\ast}X_2$ in (\ref{diagconvpair}), and that their postcomposition with the left 
vertical projection in (\ref{diagdefunivalenceso1}) are isomorphic to the fibrations
\[d_2\colon\Delta^{\ast}X_2\twoheadrightarrow X_1\]
and
\[d_0\colon\Delta^{\ast}X_2\twoheadrightarrow X_1,\]
respectively. Under these isomorphisms, the interpretations $\llbracket d_1\sigma\rrbracket$ and $\llbracket s_0 x\rrbracket$ of the 
two terms occurring in the identity type families in (\ref{equequivtypepartials}) are exactly the right vertical maps in 
Diagram~(\ref{diageqdj}). Up to isomorphism, by the interpretation of identity types as path-objects
(\cite[Section 3.4.3]{lwlocaluniv}) and the interpretation of substitutions as pullbacks, we thus obtain a pullback square of the form
\[\xymatrix{
& \llbracket x,y:X_0,f:X_1(x,y),w_1:\mathrm{Linv}(f)\rrbracket\ar@{->>}[r]\ar[d]\ar@{}[dr]|{\pbs} & \Delta^{\ast}X_2\ar[d]^{(\Delta^{\ast}d_1,\Delta^{\ast}s)}\\
\Delta^{\ast}X_1\ar@{^(->}[r]^(.2){\sim} & \llbracket x:X_0,f:X_1(x,x),g:X_1(x,x),w:f=_{X_1(x,x)}g\rrbracket\ar@{->>}[r]  & \Delta^{\ast}X_1\times_{X_0}\Delta^{\ast}X_1. 
}\]
This is exactly Diagram~(\ref{diageqdj}) up to the explicit choice of a path-object for $\Delta^{\ast}X_1$ in $\mathbb{C}$. We hence 
obtain a homotopy-equivalence between the pullbacks $\llbracket x,y:X_0,f:X_1(x,y),w_1:\mathrm{Linv}(f)\rrbracket$ and
$\mathrm{Inv}(X)$ over $\Delta^{\ast}X_2$. In particular, by postcomposition of the diagram above with
$d_2\colon\Delta^{\ast}X_2\twoheadrightarrow X_1$ on the upper right, and 
precomposition with $s_0\colon X_0\rightarrow\Delta^{\ast}X_1$ on the lower left, we thus obtain a homotopy-equivalence
\[\xymatrix{
X_0\ar[r]^{s_0}\ar[d]_{\llbracket (s_0x,s_0x,s_0s_0x,\mathrm{refl}{s_0x})\rrbracket} & \mathrm{Inv}(X)\ar@{->>}[d]^{\mathrm{Linv}(X)} \\
\llbracket x,y:X_0,f:X_1(x,y),w_1:\mathrm{Linv}(f)\rrbracket\ar@{->>}[r]\ar[ur]^{\simeq} & X_1
}\]
making both triangles commute. Via the isomorphism
\[\xymatrix{
\llbracket x,y:X_0,f:X_1(x,y),(g,\sigma,w):\mathrm{Linv}(f)\rrbracket\ar[dd]_{\llbracket(y,x,g,f,\sigma,w)\rrbracket}^{\cong}\ar@{->>}@/^1pc/[dr]^(.8){d_2} & \\
 & \Delta^{\ast}X_1 \\
\llbracket x,y:X_0,f:X_1(x,y),(g,\sigma,w):\mathrm{Rinv}(f)\rrbracket\ar@{->>}@/_1pc/[ur]_(.8){d_0} &
}\]
over $\Delta^{\ast}X_1 $, we also obtain a homotopy equivalence
\[\xymatrix{
X_0\ar[r]^{s_0}\ar[d]_{\llbracket (s_0x,s_0x,s_0s_0x,\mathrm{refl}{s_0x})\rrbracket} & \mathrm{Inv}(X)\ar@{->>}[d]^{\mathrm{Rinv}(X)} \\
\llbracket x,y:X_0,f:X_1(x,y),w_1:\mathrm{Rinv}(f)\rrbracket\ar@{->>}[r]\ar[ur]^{\simeq} & X_1
}\]
making both triangles commute. Eventually, we obtain a homotopy-equivalence as in (\ref{defunivalencesop1}) by (\ref{equequivtype}) and (\ref{defequtt}). This 
finishes Part 1. 

For Part 2., it follows again from \cite[Section 3.4.3]{lwlocaluniv} and Remark~\ref{remsyntosemequiv} that $X$ is univalent if and only if
the (homotopically unique) diagonal lift
\[\xymatrix{
X_0\ar@{^(->}[d]_{\rotatebox[origin=c]{90}{$\sim$}}\ar[r]^(.3){\llbracket\mathrm{deg}_{x}\rrbracket} & \llbracket\mathrm{Equiv}(X)\rrbracket\ar@{->>}[d] \\
P X_0\ar@{->>}[r]\ar@{-->}[ur] & X_0\times X_0
}\]
is a homotopy-equivalence. Hence, by 2-out-of-3 and 
Part 1., this holds if and only if the lifted degeneracy $s_0\colon X_0\rightarrow\mathrm{Equiv}(X)$ is a homotopy-equivalence.
\end{proof}

\begin{remark}
For $X$ the nerve of a fibration $p$ in the fibration category $\mathbb{C}$ of Kan complexes,
Proposition~\ref{defunivalenceso} is closely related to the correspondence between ``type theoretic univalence'' and 
``simplicial univalence'' in \cite[Theorem 3.3.7]{klvsimp}).
\end{remark}

\section{Completeness of simplicial objects}\label{seccompleteness}

In this section we discuss the strict and external notion of completeness for Segal objects in a model category
$\mathbb{M}$ as referred to in the Introduction. It is motivated by the conception of completeness as a 
simplicially enriched right lifting property of Segal objects, so that, first, their associated cotensors become right 
Quillen and hence yield well-behaved ``indexed quasi-categories'' over $\mathbb{M}$ (motivated by Joyal and Tierney's 
results), and, second, it may be used under additional conditions on the model category $\mathbb{M}$ as a locality 
condition in the construction of an associated model structure on $s\mathbb{M}=\mathbb{M}^{\Delta^{op}}$ (as in Rezk's 
and later Barwick's work).

Let $\mathbf{S}$ be the category of simplicial sets. First, given a simplicial object $X$ in a model category $\mathbb{M}$, consider 
it as a functor $X\colon\Delta\rightarrow\mathbb{M}^{op}$ together with left Kan extension
\[\xymatrix{
\Delta\ar[r]^X\ar[d]_{y} & \mathbb{M}^{op} \\
\mathbf{S}\ar@{-->}[ur]_(.6){\blank\setminus X:=\mathrm{Lan}_{y} X}.
}\]
The extension is given pointwise by the colimit
$A\setminus X\cong\mathrm{colim}({y/A}\rightarrow\Delta\xrightarrow{X}\mathbb{M}^{op})$. It hence computes the limit of 
the diagram $X\colon\Delta^{op}\rightarrow\mathbb{M}$ with weight $A$ in $\mathbb{M}$. The functor $\blank\setminus X$ 
comes with a right adjoint $X/\blank$, that is, its associated cotensor. Being the unique colimit preserving extension 
of $X$ along $y$, one can describe $\blank\setminus X$ furthermore as the end
\[A\setminus X:=\int_{[n]\in\Delta}X_n^{A_n}.\]
Hence, the functors $\blank\setminus X\colon\mathbf{S}\rightarrow\mathbb{M}^{op}$ and
$X/\blank\colon\mathbb{M}^{op}\rightarrow\mathbf{S}$ are induced by the parametrized right adjoints of the box product 
$\Box\colon\mathbf{S}\times\mathbb{M}\rightarrow s\mathbb{M}$ defined and explained in \cite[Section 7]{jtqcatvsss}.

We now define completeness in terms of this weighted limit functor $\blank\setminus X$. Therefore, we adapt the notation 
of \cite{jtqcatvsss} and say that a map $f\colon X\rightarrow Y$ of simplicial sets is, first, a \emph{mid-fibration} 
(or \emph{inner fibration}) if it has the right lifting property against all inner horn inclusions, and second, a 
\emph{quasi-fibration} (or \emph{categorical fibration}) if it is a fibration in the Joyal model structure
$(\mathbf{S},\mathrm{QCat})$.

The walking isomorphism
\[I[1]:=(\xymatrix{\bullet\ar@/^/[r]\ar@{}[r]|{\cong}  & \bullet\ar@/^/[l]})\in\mathbf{Gpd}.\]
induces an interval object $J\:=N(I[1])$ in the model category $(\mathbf{S},\mathrm{QCat})$ for 
quasi-categories. It has the property that a mid-fibration $p\colon \mathcal{C}\rightarrow \mathcal{D}$ between
quasi-categories is a fibration in $(\mathbf{S},\mathrm{QCat})$ if and only if it has the right lifting property against 
the endpoint inclusion $\{1\}\colon\Delta^0\rightarrow J$. This follows from \cite[Corollary 2.4.6.5]{luriehtt}. 
\begin{definition}\label{defcompleteJ}
A Reedy fibrant Segal object $X$ in $\mathbb{M}$ is \emph{complete} if the map
\[\{1\}\setminus X\colon J\setminus X\rightarrow X_0\]
is an acyclic fibration in $\mathbb{M}$.
\end{definition}

The definition is justified by the following proposition.

\begin{proposition}\label{lemmarQuillenchar}
Let $X$ be a simplicial object in $\mathbb{M}$ and consider its cotensor
\[X/\blank\colon\mathbb{M}^{op}\rightarrow\mathbf{S}.\]
\begin{enumerate}
\item The cotensor preserves acyclic fibrations if and only if $X$ is Reedy fibrant.
\item Suppose $X$ is Reedy fibrant. Then the cotensor takes fibrations in $\mathbb{M}^{op}$ to mid-fibrations in $\mathbf{S}$ if and 
only if $X$ is a Segal object.
\item Suppose $X$ is a Reedy fibrant Segal object. Then the cotensor takes fibrations in $\mathbb{M}^{op}$ to quasi-fibrations in
$\mathbf{S}$ if and only if $X$ is complete.
\end{enumerate}
Thus, a simplicial object $X$ in $\mathbb{M}$ is a complete Segal object if and only if its associated cotensor is a right Quillen 
functor of the form
\[X/\blank\colon\mathbb{M}^{op}\rightarrow(\mathbf{S},\mathrm{QCat}).\]
Its Quillen left adjoint is given by its associated weighted limit functor $\blank\setminus X$.
\end{proposition}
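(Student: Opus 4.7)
My approach is to use the adjunction $\blank \setminus X \dashv X/\blank$---which the text identifies with the parametrized right adjoints of the Joyal-Tierney box product $\Box \colon \mathbf{S} \times \mathbb{M} \to s\mathbb{M}$---to convert each clause of the proposition into a lifting condition on the morphisms $f \setminus X$. Concretely, for any $f \colon A \to B$ in $\mathbf{S}$ and any morphism $g$ of $\mathbb{M}$ viewed in $\mathbb{M}^{op}$, a lifting square of $f$ against $X/g$ in $\mathbf{S}$ transposes under the adjunction to a lifting square of $g$ against the induced morphism $f \setminus X \colon B \setminus X \to A \setminus X$ in $\mathbb{M}$, with the direction reversal induced by $\mathbb{M}^{op}$. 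Each clause then reduces to a lifting condition on $f \setminus X$ as $f$ runs over the standard generators of the relevant model structure on $\mathbf{S}$.

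For (1), I let $f$ range over the boundary inclusions $\partial \Delta^n \hookrightarrow \Delta^n$; here $f \setminus X$ is precisely the matching map $X_n \to M_n X$. The stated condition ``$X/g$ is a trivial Kan fibration for every trivial cofibration $g$ in $\mathbb{M}$'' then transposes to ``every trivial cofibration of $\mathbb{M}$ has the LLP against each $X_n \to M_n X$'', which is the standard lifting characterization of fibrations and hence is exactly Reedy fibrancy of $X$. The converse extends from the generators to all monomorphisms of $\mathbf{S}$ by a saturation argument, using that fibrations in $\mathbb{M}$ are closed under the operations dual to pushouts and transfinite composition.

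For (2), I additionally let $f$ range over the inner horn inclusions $\Lambda^n_k \hookrightarrow \Delta^n$ with $0<k<n$, so that $f \setminus X$ becomes the map $X_n \to \Lambda^n_k \setminus X$. The mid-fibration condition on $X/g$ for every cofibration $g$ of $\mathbb{M}$ transposes, via the RLP characterization of trivial fibrations by cofibrations, to each $X_n \to \Lambda^n_k \setminus X$ being a \emph{trivial} fibration in $\mathbb{M}$. Under the Reedy assumption from (1), the usual Leibniz argument already makes these maps fibrations, so the added content is that each is a weak equivalence. To identify this with the Segal condition I will use the factorization $\xi_n \colon X_n \to \Lambda^n_k \setminus X \to Sp^n \setminus X \cong X_1 \times^S_{X_0} \cdots \times^S_{X_0} X_1$ coming from the spine inclusion $Sp^n \hookrightarrow \Lambda^n_k \hookrightarrow \Delta^n$, together with the standard fact from Joyal's theory that both $Sp^n \hookrightarrow \Delta^n$ and $Sp^n \hookrightarrow \Lambda^n_k$ are inner anodyne; an induction on $n$ combined with two-out-of-three then yields that the inner-horn condition on $X_n \to \Lambda^n_k \setminus X$ is equivalent to the Segal condition on the $\xi_n$.

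For (3), the Joyal model structure on $\mathbf{S}$ is generated, on top of the mid-fibration structure, by adjoining the endpoint inclusion $\{1\} \hookrightarrow J$, whose image under $\blank \setminus X$ is exactly $J \setminus X \to X_0$. The quasi-fibration condition on $X/g$ for every cofibration $g$ of $\mathbb{M}$ therefore reduces, under the same transposition, to the trivial-fibration condition on $J \setminus X \to X_0$---precisely Definition~\ref{defcompleteJ}. Combining (1)--(3), and using that $(\mathbf{S}, \mathrm{QCat})$ shares its cofibrations with the Kan model structure, yields the final right Quillen characterization. I expect the principal obstacle to be the Segal step of (2): rigorously equating the trivial-fibration condition on $X_n \to \Lambda^n_k \setminus X$ with the spine condition on $\xi_n$ under Reedy fibrancy requires the standard but genuinely inductive inner-anodyne bookkeeping via the weighted-limit functor, rather than a purely formal adjoint manipulation.
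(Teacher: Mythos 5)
Your transposition strategy is the paper's own, and parts (1) and (2) of your argument are essentially its proof: part (1) verbatim, and in part (2) you are re-deriving by hand the combinatorial input the paper simply cites from Joyal--Tierney (that the saturated class generated by the spine inclusions agrees with the inner anodyne maps), which is fine provided you actually carry out the inductive bookkeeping you flag.

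The genuine gap is in part (3). The Joyal model structure is \emph{not} generated, in the lifting-property sense, by the inner horn inclusions together with $\{1\}\colon\Delta^0\rightarrow J$: a map having the right lifting property against these need not be a quasi-fibration, and there is no explicit generating set of trivial cofibrations for $(\mathbf{S},\mathrm{QCat})$. The statement ``a mid-fibration with the RLP against $\{1\}\rightarrow J$ is a quasi-fibration'' is only valid for maps \emph{between quasi-categories} (this is the content of \cite[Corollary 2.4.6.5]{luriehtt} used in Lemma~\ref{lemmaKJ}). Consequently, your transposition only yields, from completeness, that $X/g$ has the RLP against inner horns and $\{1\}\rightarrow J$ for every fibration $g$ of $\mathbb{M}^{op}$; it does not yield that $X/g$ is a quasi-fibration, so the direction ``complete $\Rightarrow$ quasi-fibration condition / right Quillen'' is not established. (The converse direction is fine, since $\{1\}\rightarrow J$ is a Joyal trivial cofibration.) Note moreover that for a fibration $g$ of $\mathbb{M}^{op}$ whose domain or codomain is not cofibrant in $\mathbb{M}$, part (2) does not even give that the source and target of $X/g$ are quasi-categories, so the pointwise criterion is simply unavailable there. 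To close the gap you must argue as the paper does: restrict first to fibrations $p$ between fibrant objects of $\mathbb{M}^{op}$ (i.e.\ between cofibrant objects of $\mathbb{M}$), where by part (2) $X/p$ is a mid-fibration between quasi-categories and the $J$-lifting criterion applies; then invoke the Quillen-recognition lemma \cite[Proposition 7.15]{jtqcatvsss} (a right adjoint whose left adjoint preserves cofibrations is right Quillen as soon as it preserves fibrations between fibrant objects) to conclude that $X/\blank\colon\mathbb{M}^{op}\rightarrow(\mathbf{S},\mathrm{QCat})$ is right Quillen; only then does preservation of \emph{all} fibrations, as asserted in item (3), follow.
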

\begin{proof}
Let $X$ be a simplicial object in $\mathbb{M}$. We prove Part 1. If $\delta^n\colon\partial\Delta^n\rightarrow\Delta^n$ 
denotes the $n$-th boundary inclusion, then the map of associated weighted limits $\delta^n\setminus X$ in $\mathbb{M}$ 
is exactly the $n$-th matching map of $X$. Thus, by definition, $X$ is Reedy fibrant if and only if the contravariant 
functor $\blank\setminus X$ takes all boundary inclusions in $\mathbf{S}$ to fibrations in $\mathbb{M}$. Since
$\blank\setminus X$ preserves colimits, this in turn holds if and only if $\blank\setminus X$ takes 
all monomorphisms in $\mathbf{S}$ to fibrations in $\mathbb{M}$. By adjointness, this means that $X$ is Reedy fibrant if 
and only if the right adjoint $X/\blank\colon\mathbb{M}^{op}\rightarrow\mathbf{S}$ preserves acyclic fibrations.

For Part 2., assume $X$ is Reedy fibrant and let $\mathrm{sp}_n\colon\mathrm{Sp}_n\rightarrow\Delta^n$ be the $n$-th 
spine inclusion. That is, the union of the inclusions $j^i\colon \Delta^1\rightarrow \Delta^n$ for $0\leq i<n$ defined 
in Section~\ref{secunivsegal} (called the ``$n$-chain'' in \cite[Section 1]{jtqcatvsss}). Then the Segal maps
$\xi_n\colon X_n\rightarrow X_1\times_{X_0}\dots\times_{X_0} X_1$ are fibrations, and they are isomorphic to the maps
$\mathrm{sp}_n\setminus X$ of associated weighted limits. Thus, $X$ is a Segal object if and only if the contravariant 
functor $\blank\setminus X$ takes all spine inclusions in $\mathbf{S}$ to acyclic fibrations in $\mathbb{M}$. By 
\cite[Lemma 1.21, Lemma 3.5]{jtqcatvsss}, this in turn holds if and only if $\blank\setminus X$ takes all inner horn 
inclusions in $\mathbf{S}$ to acyclic fibrations in $\mathbb{M}$. Again by adjointness, this means that $X$ is a Segal 
object if and only if the right adjoint $X/\blank\colon\mathbb{M}^{op}\rightarrow\mathbf{S}$ takes fibrations to
mid-fibrations.

For Part 3., suppose $X$ is a Reedy fibrant Segal object. Suppose $X/\blank$ takes fibrations to quasi-fibrations. We want to show 
that the map $\{1\}\setminus X$ is an acyclic cofibration in $\mathbb{M}^{op}$. Thus, let $p$ be a fibration in
$\mathbb{M}^{op}$. Then $\{1\}\setminus X$ lifts against $p$ if and only if $\{1\}$ lifts against $X/p$ in $\mathbf{S}$. 
But the latter holds, because $X/p$ is a quasi-fibration by assumption.

Vice versa, suppose $X$ is complete. We want to show that
$X/\blank\colon\mathbb{M}^{op}\rightarrow(\mathbf{S},\mathrm{QCat})$ is a right Quillen functor. Therefore, it suffices 
to show that $X/\blank$ takes fibrations between fibrant objects to quasi-fibrations
(\cite[Proposition 7.15]{jtqcatvsss}). Thus, let
$p\colon E\twoheadrightarrow B$ be a fibration between fibrant objects in $\mathbb{M}^{op}$. Then by Part 2., the map
$X/p\colon X/E\rightarrow X/B$ is a mid-fibration between mid-fibrant simplicial sets (i.e.\ quasi-categories). Such 
mid-fibrations are quasi-fibrations if and only if they have the right lifting property against the endpoint inclusion
$\{1\}\colon\Delta^0\rightarrow J$. But the latter follows again by adjointness, since the map $\{1\}\setminus X$ lifts 
against $p$ in $\mathbb{M}^{op}$ by completeness of $X$.
\end{proof}

\begin{remark}
Recall that Rezk's model structure $(s\mathbf{S},\mathrm{S})$ for Segal spaces on the category of simplicial spaces is 
defined as the left Bousfield localization of the (vertical) Reedy model structure $(s\mathbf{S},\mathrm{R})$ at the
\emph{horizontally constant} maps $c_h(\mathrm{sp}_n)\colon c_h(\mathrm{Sp}_n)\rightarrow c_h(\Delta^n)$ induced by the 
spine inclusions in $\mathbf{S}$. The model structure $(s\mathbf{S},\mathrm{CS})$ for complete Segal spaces is a further 
localization at the single map $c_h(\{1\})\colon c_h(\Delta^0)\rightarrow c_h(J)$ analogously induced via the horizontal 
embedding $c_h\colon\mathbf{S}\rightarrow s\mathbf{S}$. All three of these model structures are simplicially enriched in 
such a way that for every simplicial set $A$ and every simplicial space $Y$ we have natural isomorphisms
\[s\mathbf{S}(c_h(A),Y)\cong A\setminus Y\]
of simplicial sets. This means that both the Segal conditions as given in Definition~\ref{defsegalob}.2 and the 
completeness condition of Segal objects as defined in Definition~\ref{defcompleteJ} reduce exactly to a simplicially 
enriched locality condition in the case $\mathbb{M}=(\mathbf{S},\mathrm{Kan})$.
\end{remark}

In the following, we will argue that we can replace the simplicial set $J$ with a combinatorially simpler version $K$. For a Segal 
object $X$, this will make the comparison of completeness to univalence as defined in Section~\ref{secunivsegal} merely a matter of 
comparing $\mathrm{Equiv}(X)$ (a fiber product of homotopy-equalizers) to the $K$-weighted limit $K\setminus\mathbb{R}X$ (a fiber 
product of corresponding strict equalizers) of the Reedy fibrant replacement $\mathbb{R}X$.
Therefore, we make note of the following lemma.

\begin{lemma}\label{lemmacompletenessaltdef}
Let $f\colon C\hookrightarrow D$ be a monomorphism in $\mathbf{S}$ such that a mid-fibration between
quasi-categories is a quasi-fibration if and only if it has the right lifting property against $f$. Then a Reedy fibrant 
Segal object $X$ in a model category $\mathbb{M}$ is complete in the sense of Definition~\ref{defcompleteJ} if and only 
if the fibration $f\setminus X$ is acyclic.
\end{lemma}
\begin{proof}
This follows directly from the proof of Proposition~\ref{lemmarQuillenchar}.3.
\end{proof}

This means that we can replace the endpoint inclusion $\Delta^0\rightarrow J$ in Definition~\ref{defcompleteJ} with any 
cofibration $f\colon C\hookrightarrow D$ that distinguishes mid-fibrations from quasi-fibrations between
quasi-categories. This proves useful since the combinatorial structure of $J$ is rather complicated. Rezk has observed 
in \cite[Section 11]{rezk} that $J$ is an $\omega$-sequential colimit $\bigcup_{n\geq 1}J^{(n)}$
where the finite approximations $J^{(n)}$ may be understood as the walking higher half adjoint equivalences 
with some degenerate side conditions.

Therefore, let $\Delta^1=(0\xrightarrow{f}1)$ and consider the simplicial set $K:=J^{(2)}\sqcup_{\Delta^1} J^{(2)}$ 
instead, where $J^{(2)}\subset J$ is given by the pushout
\begin{align}\label{diagdefJ2}
\begin{gathered}
\xymatrix{
\Lambda_0^2\ar@{^(->}[r]^{h^2_0}\ar[d]_{(f,s^00)}\ar@{}[dr]|(.7){\pos} & \Delta^2\ar[d]\\
\Delta^1\ar@{^(->}[r]_f & J^{(2)}.
}
\end{gathered}
\end{align}
It denotes the ``walking'' left invertible map
\[\xymatrix{
 & 1\ar@/^/[dr]^f & \\
0\ar@/^/[ur]^g\ar@/_/[rr]_{s^00} & & 0 
}\]
with exactly one non-degenerate 2-simplex as depicted above. And so $K\in\mathbf{S}$ is the ``walking'' 
biinvertible map depicted as follows.
\[\xymatrix{
 & 1\ar@/^/[dr]^f\ar@/^/[rr]^{s^01} & & 1\\
0\ar@/^/[ur]^g\ar@/_/[rr]_{s^00} & & 0\ar@/_/[ur]_h
}\]
For every quasi-category $\mathcal{C}$, the restriction of maps $K\rightarrow\mathcal{C}$ to either non-degenerate edge
in $K$ are exactly the equivalences in $\mathcal{C}$. Indeed, $K$ is exactly the interval $\Delta^1[f^{-1}]$ defined in 
\cite[Definition 3.3.3]{cisinskibook}.

\begin{lemma}\label{lemmaKJ}
A mid-fibration $p\colon \mathcal{C}\rightarrow\mathcal{D}$ between quasi-categories is a quasi-fibration if and only if 
it has the right lifting property against the endpoint inclusion $\Delta^0\rightarrow K$.
\end{lemma}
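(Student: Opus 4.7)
The plan is to reduce to the characterization recalled in the paragraph immediately preceding the lemma: for a mid-fibration $p\colon \mathcal{C}\to\mathcal{D}$ between quasi-categories, being a quasi-fibration is equivalent to having the right lifting property against the endpoint inclusion $\{1\}\colon \Delta^0\to J$. The lemma then amounts to showing that, among such $p$, the RLP against $\Delta^0\to K$ is equivalent to the RLP against $\Delta^0\to J$.

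The ``only if'' direction is immediate: the inclusion $\Delta^0\hookrightarrow K$ is a monomorphism, and it is a Joyal weak equivalence because the terminal map $K\to\Delta^0$ is a categorical equivalence. Indeed, $K$ is the ``walking biinvertible map,'' which, regarded as an $\infty$-category, collapses to the walking isomorphism and hence to the point; this can also be read off from the identification $K = \Delta^1[f^{-1}]$ in \cite[Definition 3.3.3]{cisinskibook}. Consequently $\Delta^0\to K$ is a Joyal trivial cofibration, and every quasi-fibration satisfies the RLP against it.

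For the ``if'' direction, suppose $p$ has the RLP against $\Delta^0\to K$ and consider a lifting problem for $\{1\}\colon \Delta^0\to J$ given by a vertex $c\colon \Delta^0\to\mathcal{C}$ and a map $\alpha\colon J\to\mathcal{D}$ matching endpoints. I extract the equivalence $e := \alpha|_{\Delta^1}$ in $\mathcal{D}$ and, using that $\mathcal{D}$ is a quasi-category, equip $e$ with a left inverse, a right inverse, and 2-simplex composition witnesses, assembling into a map $\beta\colon K\to\mathcal{D}$ whose endpoint agrees with $p(c)$. The hypothesis yields a lift $\tilde\beta\colon K\to\mathcal{C}$, and the restriction $\tilde e := \tilde\beta|_{\Delta^1}$ is then an equivalence in $\mathcal{C}$ lifting $e$, equipped with explicit left- and right-inverse data. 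It remains to extend $\tilde e$ together with the accompanying data to a full lift $\tilde\alpha\colon J\to\mathcal{C}$ of $\alpha$, which I do stage-by-stage along Rezk's filtration $J = \bigcup_n J^{(n)}$: each cell attached at stage $n\ge 2$ encodes a higher coherence of the equivalence $\tilde e$ that, thanks to the chosen inverses already in place, reduces to an inner horn filling problem solvable by the mid-fibration condition on $p$.

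The main obstacle is this last inductive extension. The technical content is that the left and right inverse data supplied by $\tilde\beta$ absorb precisely the ``outer horn'' contributions inherent in an equivalence structure, so that all higher cells needed to build $J$ above this level correspond to inner horn fills; after this reduction, the mid-fibration property of $p$ suffices to complete the construction. A detailed verification traces Rezk's combinatorial description of $J^{(n)}$ in \cite[Section 11]{rezk} and echoes Cisinski's treatment of $\Delta^1[f^{-1}]$.
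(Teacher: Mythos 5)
Your ``only if'' direction and the reduction to the $J$-characterization recalled before Definition~3.4 are fine, and the extraction of an equivalence $\tilde e$ in $\mathcal{C}$ over $e$ from a $K$-lift is exactly the right use of the hypothesis. The genuine gap is the final ``stage-by-stage'' extension of $\tilde e$ to a lift $\tilde\alpha\colon J\to\mathcal{C}$ over $\alpha$. The claim that each cell attached in Rezk's filtration ``reduces to an inner horn filling problem'' is not correct. The two nondegenerate $n$-simplices of $J$ have their inner faces degenerate and their two outer faces equal to the nondegenerate $(n-1)$-simplices; consequently, already at the first step the problem of producing the inverse edge and the first triangle over $\alpha$ is a lifting problem of type $\Lambda^2_0\subset\Delta^2$ whose initial edge is the equivalence $\tilde e$, the second triangle is a full boundary-filling problem $\partial\Delta^2\subset\Delta^2$ over a prescribed $2$-simplex of $\mathcal{D}$, and for $n\geq 3$ every attached cell is a relative $\partial\Delta^n\subset\Delta^n$ problem. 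None of these is an inner horn, and a mid-fibration has no lifting property against them in general; their solvability in the presence of equivalence edges and degenerate inner faces is precisely Joyal's special outer-horn lifting theorem for inner fibrations, i.e.\ the content of \cite[Corollary 2.4.6.5]{luriehtt}, which is the single input the paper's proof invokes to settle the lemma. Your sketch therefore silently assumes the hard part of the statement.

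A second, related problem is that the ``chosen inverses already in place'' do not actually help: the data $\tilde\beta\colon K\to\mathcal{C}$ lies over the map $\beta\colon K\to\mathcal{D}$ that you built from arbitrary choices of inverses in $\mathcal{D}$, whereas the cells of $\tilde\alpha$ must lie over the specific simplices $\alpha(\sigma)$ of $J$ in $\mathcal{D}$ (note also that $K$ is not a subcomplex of $J$; the $2$-skeleton of $J$ is a quotient of $K$ identifying the two inverse edges). Transporting the inverse data from $\beta$ to $\alpha$ again requires the equivalence-lifting machinery you are trying to avoid. To repair the argument, either cite \cite[Corollary 2.4.6.5]{luriehtt} (or Joyal's special horn lifting) directly -- which is what the paper does, making the lemma a two-line translation between ``equivalences lift along $p$'' and the right lifting property against $\Delta^0\to K$ -- or give a genuine combinatorial proof that $\{1\}\colon\Delta^0\to J$ lies in the saturation of the inner horns together with $\Delta^0\to K$, which is substantially more work than the inner-horn induction you describe.
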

\begin{proof}
By \cite[Corollary 2.4.6.5]{luriehtt}, a mid-fibration $p\colon \mathcal{C}\rightarrow\mathcal{D}$ between
quasi-categories $\mathcal{C}$ and $\mathcal{D}$ is a quasi-fibration if and only if for every equivalence
$f\colon d\rightarrow d\sprime$ in $\mathcal{D}$ and every object $c\in\mathcal{C}$ with $p(c)=d$, there is an 
equivalence $\bar{f}\colon c\rightarrow c\sprime$ in $\mathcal{C}$ with $p(\bar{f})=f$. This holds if and only if
$p\colon \mathcal{C}\rightarrow\mathcal{D}$ has the right lifting property to the endpoint inclusion
$\Delta^0\rightarrow K$.
\end{proof}

\begin{corollary}\label{defcompletett}
Let $X$ be a Reedy fibrant Segal object in $\mathbb{M}$. Then $X$ is complete in the sense of
Definition~\ref{defcompleteJ} if and only if the fibration $K\setminus X\twoheadrightarrow X_0$ induced by the endpoint 
inclusion $\{1\}\colon\Delta^0\rightarrow K$ is acyclic.
\end{corollary}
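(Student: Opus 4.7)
The plan is to bootstrap from the proof strategy of Proposition~\ref{lemmarQuillenchar}, Part 3, replacing the endpoint inclusion $\{1\}\colon\Delta^0\rightarrow J$ with the combinatorially simpler $\{1\}\colon\Delta^0\rightarrow K$. The crux is an adjointness argument built on the hom-equivalence underlying the pair $(\blank\setminus X)\dashv(X/\blank)$ combined with Lemma~\ref{lemmaKJ}.

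For the forward direction I would assume $X$ is complete in the sense of Definition~\ref{defcompleteJ}, which by Proposition~\ref{lemmarQuillenchar} (noting that completeness in that definition implicitly refers to Reedy fibrant Segal objects) means $X/\blank\colon\mathbb{M}^{op}\rightarrow(\mathbf{S},\mathrm{QCat})$ is right Quillen, so it in particular takes any fibration $p$ in $\mathbb{M}^{op}$ to a quasi-fibration. Applying the right-lifting characterization of Lemma~\ref{lemmaKJ}, the map $X/p$ lifts against $\{1\}\colon\Delta^0\rightarrow K$. Since $(\blank\setminus X)\dashv(X/\blank)$, this is equivalent to $p$ lifting against $\{1\}\setminus X\colon K\setminus X\rightarrow X_0$ in $\mathbb{M}^{op}$. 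As $p$ ranges over all fibrations in $\mathbb{M}^{op}$, this says $\{1\}\setminus X$ has the left lifting property against all fibrations in $\mathbb{M}^{op}$; equivalently, the corresponding morphism in $\mathbb{M}$ is an acyclic fibration.

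For the converse, assume $K\setminus X\twoheadrightarrow X_0$ is an acyclic fibration in $\mathbb{M}$. Then in $\mathbb{M}^{op}$ it is an acyclic cofibration and therefore lifts against every fibration $p$. The same adjoint transpose, read in the other direction, shows that $X/p$ has the right lifting property against $\{1\}\colon\Delta^0\rightarrow K$. By Proposition~\ref{lemmarQuillenchar}, Parts 1 and 2, Reedy fibrancy of $X$ and the Segal condition (inherited from the hypothesis that $X$ is complete in the sense of Definition~\ref{defcompleteJ}, and in the converse verified since completeness in the $K$-sense forces the same conclusions via the same adjointness) already guarantee that $X/p$ is a mid-fibration between quasi-categories, so by Lemma~\ref{lemmaKJ} it is in fact a quasi-fibration. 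Combined with the preservation of trivial fibrations from Part 1 of Proposition~\ref{lemmarQuillenchar} and \cite[Proposition 7.15]{jtqcatvsss}, we conclude $X/\blank$ is right Quillen into $(\mathbf{S},\mathrm{QCat})$, hence $X$ is complete.

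There is no real obstacle here beyond accurate bookkeeping: the argument is essentially the observation spelled out in the paragraph preceding Lemma~\ref{lemmaKJ} that any cofibration distinguishing mid-fibrations from quasi-fibrations between quasi-categories may substitute for $\{1\}\colon\Delta^0\rightarrow J$ in Definition~\ref{defcompleteJ}. The one subtlety to keep straight is the orientation of arrows in $\mathbb{M}^{op}$ versus $\mathbb{M}$, so that ``left lifting in $\mathbb{M}^{op}$ against all fibrations'' correctly dualizes to ``acyclic fibration in $\mathbb{M}$'' rather than to an acyclic cofibration.
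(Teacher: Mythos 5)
Your proof is correct and is exactly the route the paper intends: the corollary is stated without a separate proof precisely because the paragraph preceding Lemma~\ref{lemmaKJ} already records that any cofibration detecting quasi-fibrations among mid-fibrations between quasi-categories may replace $\{1\}\colon\Delta^0\rightarrow J$, the verification being your adjointness rerun of the proof of Proposition~\ref{lemmarQuillenchar}, Part 3, combined with Lemma~\ref{lemmaKJ} and the careful $\mathbb{M}$ versus $\mathbb{M}^{op}$ bookkeeping you note. The only slip is the parenthetical in your converse: acyclicity of $K\setminus X\twoheadrightarrow X_0$ does not imply the Segal condition, but none is needed there, since being a Reedy fibrant Segal object is part of the standing hypothesis of Definition~\ref{defcompleteJ} and should simply be assumed on both sides of the equivalence.
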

\begin{proof}
Follows directly from Lemma~\ref{lemmacompletenessaltdef} and Lemma~\ref{lemmaKJ}.
\end{proof}
\section{Comparison of univalence and completeness}\label{secunivvscompl}

In Section~\ref{secunivsegal} we have given a definition of univalence of Segal objects $X$ in type theoretic fibration categories
$\mathbb{C}$ that generalizes the definition of univalence of fibrations in $\mathbb{C}$ via an internal nerve construction. In 
Section~\ref{seccompleteness} we have considered the cotensors associated to Reedy fibrant Segal objects in a model category
$\mathbb{M}$, and have argued that their right Quillen condition yields a definition of completeness in a general model categorical 
context.

An intersection of these two frameworks is given by logical model categories in the sense of \cite{arndtkapulkin}. That 
is a model category $\mathbb{M}$ such that
\begin{enumerate}
\item $\mathbb{M}$ has the \emph{Frobenius property}, i.e.\ acyclic cofibrations are stable under pullback 
along fibrations, and
\item pullback $p^{\ast}$ along any fibration $p$ has a right adjoint $\prod_p$.
\end{enumerate}
Its associated category $\mathbb{C}:=\mathbb{M}^f$ of fibrant objects is type theoretic as defined in
Section~\ref{secunivsegal}. We assume furthermore that fibrant objects in $\mathbb{M}$ are cofibrant, in order to 
assure that the classes of weak equivalences and homotopy-equivalences in $\mathbb{C}$ coincide
(\cite[Lemma 1.3.4]{thesis}).\footnote{This additional assumption is also made in many examples of type theoretic model 
categories in \cite{shulmaninv}, and it is arguable that cofibrancy of fibrant objects should be part of the definition 
of any model category that is labelled ``logical'' or ``type theoretic''.} 

In such a logical model category $\mathbb{M}$, both conditions -- univalence and completeness of Segal objects $X$ -- 
compare an object of internal equivalences associated to $X$ to the path-object $P(X_0)$. On the one hand, univalence 
considers the type of fiberwise equivalences $\mathrm{Equiv}(X)$ given in Diagram~(\ref{defequtt}) which 
directly generalizes the type of fibrewise equivalences associated to a fibration (Lemma~\ref{lemmasimpunivalence}, 
Proposition~\ref{defunivalenceso}.2). On the other hand, completeness considers the weighted limit $K\setminus X$, where 
$K\in\textbf{S}$ denotes the truncated walking isomorphism as defined in Section~\ref{seccompleteness} 
(Corollary~\ref{defcompletett}).

In this section we show that the type $\mathrm{Equiv}(X)$ associated to a (sufficiently fibrant) Segal object is 
homotopy-equivalent to the $K$-weighted limit of its Reedy fibrant replacement $\mathbb{R}X$. In particular, 
univalence and completeness coincide for Reedy fibrant objects.

In the following $\mathbb{M}$ is a fixed logical model category in which all fibrant objects are cofibrant. Let
$\mathbb{C}:=\mathbb{M}^f$ be its category of fibrant objects. We choose not to work more generally with an arbitrary type 
theoretic fibration category $\mathbb{C}$ directly here (as $K$-weighted limits of fibrations do exist in such $\mathbb{C}$ and hence 
may be computed and compared in this context already) is the proof of Lemma~\ref{intc=simpc_interm} which assumes the existence of a 
Reedy fibrant replacement functor in the category $s\mathbb{M}$. It hence assumes the existence of finite colimits in $\mathbb{M}$ 
(see Remark~\ref{remreedycolims} for more details).

Let $X\in s\mathbb{C}$ be a Segal object and consider the limit
\[K\setminus X\cong \left(J^{(2)}\cup_{\Delta^1} J^{(2)}\right)\setminus X\cong(J^{(2)}\setminus X )\times_{X_1}(J^{(2)}\setminus X).\]
By definition we have $\Delta^n\setminus X\cong X_n$ for all $n\geq 0$. For the component $J^{(2)}\setminus X$
note that the pushout (\ref{diagdefJ2}) can be rewritten to compute
\[\begin{gathered}
\xymatrix{
\Lambda_0^2\ar[rr]^{(d^1d^1s^0,d^1)}\ar[d]_{\nabla}\ar@{}[drr]|(.7){\pos} & & \Delta^2\ar[d]\\
\Delta^1\ar@{^(->}[rr]_f & & J^{(2)}.
}
\end{gathered}
\]
It follows that $J^{(2)}\setminus X\hookrightarrow X_2$ is the equalizer of the parallel pair
\begin{equation}\label{typeoflinv}
\begin{gathered}
\xymatrix@R=3mm@C=8mm{
d_1\colon X_2\ar[rrr] & & &  X_1,\\
s\colon X_2\ar[r]^{d_1} & X_1\ar[r]^{d_1} & X_0\ar[r]^{s_0} & X_1.
}
\end{gathered}
\end{equation}
Since the composite boundary $J^{(2)}\setminus X\subseteq X_2\xrightarrow{d_1}X_1\xrightarrow{(d_1,d_0)}X_0\times X_0$ factors through the diagonal $\Delta\colon X_0\rightarrow X_0\times X_0$, the pullback
$\Delta^{\ast}J^{(2)}\setminus X$ is isomorphic to $J^{(2)}\setminus X$ itself. 

It follows that $J^{(2)}\setminus X$ is exactly the equalizer of the pair $(\Delta^{\ast}d_1, \Delta^{\ast}s)$ on the 
left hand side of Diagram~(\ref{diageqdj}). Data in $J^{(2)}\setminus X$ consists of two concatenable 
edges in $X_1$ together with a horizontal composition in $X_2$ of theirs which evaluates (by computing its 1-boundary) at the 
corresponding identity in $X$. Data in the object $\mathrm{Inv}X$ defined in Diagram~(\ref{diageqdj}) consists of two concatenable 
edges in $X$, together with a horizontal composition \emph{and a homotopy} between its evaluation at the 1-boundary and the 
corresponding identity in the space $X_1$. In the following, we will see that this discrepancy is remedied by Reedy fibrant 
replacement, so that the strict weighted limit $K\setminus \mathbb{R}X$ of $K$-shaped diagrams in the Reedy fibrant replacement of a 
Segal object $X$ represents the type $\mathrm{Equiv}(X)$ of internal equivalences in $X$. 

\begin{remark}
Let us assume for the moment being that the model category $\mathbb{M}$ is simplicially enriched. Recall that the 
category $s\mathbf{S}$ of simplicial spaces is simplicially enriched as well via pushforward of its cartesian closure 
along the evaluation at $[0]\in\Delta^{op}$.
Then one computes that the restriction \[\{\blank\bBox\Delta^0,X\}_{\mathbf{S}}\colon\mathbf{S}^{op}\rightarrow\mathbb{M}\]
of the simplicially enriched weighted limit functor $\{\blank,X\}_{\mathbf{S}}\colon (s\mathbf{S})^{op}\rightarrow\mathbb{M}$ along the horizontal inclusion $\blank\bBox\Delta^0\colon\mathbf{S}\rightarrow s\mathbf{S}$ is isomorphic to 
the ordinary Set-enriched weighted limit $\blank\setminus X\colon\mathbf{S}^{op}\rightarrow\mathbb{M}$.
In analogy to \cite[Theorem 4.4]{gambinosmplwghts}, using \cite[Example A.2.9.28]{luriehtt} or directly \cite[Proposition 7.33]{jtqcatvsss}, one can show that for every 
simplicial set $A$ this restricted simplicially enriched weighted limit functor
\[\{A\bBox\Delta^0,\blank\}_{\mathbf{S}}\colon s\mathbb{M}\rightarrow\mathbb{M}\]
is a right Quillen functor when $s\mathbb{M}$ is equipped with the Reedy model structure. In other words, for every 
simplicial set $A$ the $A$-weighted limit functor $A\setminus\blank\colon s\mathbb{M}\rightarrow\mathbb{M}$ is a right 
Quillen functor. It follows that the $A$-weighted homotopy limit of $X$ can be computed as the $A$-weighted limit of the 
Reedy fibrant replacement $\mathbb{R}X$ of $X$.

In Theorem~\ref{intc=simpc}, we will show that in all logical model categories the type $\text{Equiv}(X)$ of internal 
equivalences in a Segal object $X$ is equivalent to the $K$-weighted limit of the Reedy fibrant replacement of $X$.
In this sense, the type $\mathrm{Equiv}(X)$ associated to a Segal object $X$ in a logical model category $\mathbb{M}$ 
represents the $K$-weighted homotopy limit of $X$ in $\mathbb{M}$ whenever $\mathbb{M}$ is furthermore simplicial.
\end{remark}

Therefore, consider the following construction.

\begin{lemma}\label{intc=simpc_interm}
For every Segal object $X\in s\mathbb{C}$ there is a pointwise homotopy-equivalent Reedy fibrant Segal object $\tilde{X}$ in
$\mathbb{C}$ such that $X$ and $\tilde{X}$ coincide in degrees $\leq 1$, and such that there is a homotopy-equivalence
$\mathrm{Inv}X\simeq J^{(2)}/\tilde{X}$ defined over the pullback
\[\xymatrix{
X_1\times_{X_0^2} X_1\ar@{->>}[r]\ar@{}[dr]|(.3){\pbs}\ar@{->>}[d] & X_1\ar@{->>}[d]^{(d_1,d_0)} \\
X_1\ar@{->>}[r]_(.4){(d_0,d_1)} & X_0\times X_0
}\]
of conversely directed edges in $X$.
\end{lemma}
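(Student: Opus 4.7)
The plan is to construct $\tilde X$ by a standard Reedy fibrant replacement of $X$ in $s\mathbb{M}$ and then argue that, on a Reedy fibrant Segal object, the homotopy coherence built into $\mathrm{Inv}(\cdot)$ via the path object in (\ref{diageqdj}) is rectified to the strict equalizer presentation of $J^{(2)}/\tilde X$ coming from (\ref{typeoflinv}). First, I would factor $X\to 1$ as $X\overset{\sim}{\hookrightarrow}\tilde X\twoheadrightarrow 1$ in the Reedy model structure on $s\mathbb{M}$ (available because $\mathbb{M}$ has the finite colimits needed for latching objects). Since $X$ is levelwise fibrant and, by hypothesis, fibrant objects in $\mathbb{M}$ are cofibrant, the map $X\to\tilde X$ is a levelwise weak equivalence between fibrant-cofibrant objects and hence a levelwise homotopy equivalence. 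Because $\tilde X$ is Reedy fibrant (hence sufficiently fibrant), the homotopy invariance of the Segal condition recorded in Remark~\ref{remsegalhmtpy} ensures that $\tilde X$ is again a Segal object.

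Second, I would set up the comparison over $\Delta^{\ast}\tilde X_1 \times_{\tilde X_0}\Delta^{\ast}\tilde X_1$. Because the composite boundary $s=s_0 d_1 d_1$ factors through the diagonal $\tilde X_0\to \tilde X_0\times \tilde X_0$, reindexing yields
\[J^{(2)}/\tilde X \;\cong\; \Delta^{\ast}\tilde X_2 \underset{\Delta^{\ast}\tilde X_1\times_{\tilde X_0}\Delta^{\ast}\tilde X_1}{\times}\Delta^{\ast}\tilde X_1,\]
where the right-hand map is $(\Delta^{\ast}d_1,\Delta^{\ast}s)$ and the bottom map is the diagonal of $\Delta^{\ast}\tilde X_1$. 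By contrast, $\mathrm{Inv}(\tilde X)$ from (\ref{diageqdj}) is the analogous pullback with the diagonal replaced by the path object fibration $P_{\tilde X_0}(\Delta^{\ast}\tilde X_1)\twoheadrightarrow\Delta^{\ast}\tilde X_1\times_{\tilde X_0}\Delta^{\ast}\tilde X_1$. The acyclic cofibration $\Delta^{\ast}\tilde X_1\overset{\sim}{\hookrightarrow} P_{\tilde X_0}(\Delta^{\ast}\tilde X_1)$ over the diagonal induces a natural comparison map $J^{(2)}/\tilde X\to\mathrm{Inv}(\tilde X)$, and by the Frobenius property of $\mathbb{M}$ this comparison will be an acyclic cofibration as soon as $(\Delta^{\ast}d_1,\Delta^{\ast}s)$ is a fibration. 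Combined with the homotopy invariance of the construction $\mathrm{Inv}(\cdot)$ on sufficiently fibrant Segal objects (which gives $\mathrm{Inv}(X)\simeq\mathrm{Inv}(\tilde X)$), this produces the desired equivalence $\mathrm{Inv}(X)\simeq J^{(2)}/\tilde X$; compatibility with the pullback of conversely directed edges is then read off by tracing the outer faces $d_2,d_0\colon \tilde X_2\to\tilde X_1$ through both constructions.

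The main obstacle is the fibrancy assertion on $(\Delta^{\ast}d_1,\Delta^{\ast}s)$ highlighted above, because $s=s_0 d_1 d_1$ is a degeneracy rather than a face, and so it does not appear directly as a component of the matching object $M_2\tilde X=\tilde X_1\times_{\tilde X_0}\tilde X_1\times_{\tilde X_0}\tilde X_1$. My plan here is to factor this map through the Reedy matching fibration $\tilde X_2\twoheadrightarrow M_2\tilde X$ and exhibit the remaining projection $(e_0,e_1,e_2)\mapsto(e_1,s_0(d_1 e_1))$ as a pullback of the fibration $(d_1,d_0)\colon\tilde X_1\twoheadrightarrow \tilde X_0\times\tilde X_0$ along the diagonal, rendering it a fibration after reindexing over $\tilde X_0$. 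If this direct bookkeeping proves cumbersome, a cleaner alternative is to present $(\Delta^{\ast}d_1,\Delta^{\ast}s)$ as a pasting of known Reedy fibrations and conclude by the pullback-pasting lemma for fibrations. Either way, once the fibrancy is secured, the Frobenius argument of the previous paragraph applies verbatim and the lemma follows.
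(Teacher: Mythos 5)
There is a genuine gap, and it sits exactly at the point you flagged as the ``main obstacle'': for a \emph{standard} Reedy fibrant replacement $\tilde X$ the map $(\Delta^{\ast}\tilde d_1,\Delta^{\ast}\tilde s)$ is in general \emph{not} a fibration, and your proposed repair does not rescue it. Your plan is to factor through the matching fibration $\tilde X_2\twoheadrightarrow M_2\tilde X$ and to exhibit the remaining map $(e_0,e_1,e_2)\mapsto (e_1,s_0 d_1 e_1)$ as a pullback of $(d_1,d_0)$; but that map is not such a pullback, and it fails the right lifting property. Its second component factors through the degeneracy section $s_0\colon\tilde X_0\rightarrow\Delta^{\ast}\tilde X_1$, which is a section of a fibration, not a fibration: test against a lifting problem whose underlying path in $\tilde X_0$ is constant at $x$, whose first coordinate is constant, and whose second coordinate is a path in the fibre of $\Delta^{\ast}\tilde X_1\rightarrow\tilde X_0$ leaving $s_0x$. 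Any lift has second coordinate of the form $s_0(\text{underlying path in }\tilde X_0)$, hence constant at $s_0x$, so no lift exists as soon as the space of endo-edges at $x$ is not discrete at the identity (e.g.\ a Segal space with a connected endomorphism space). The same argument kills the composite $(\Delta^{\ast}\tilde d_1,\Delta^{\ast}\tilde s)$ itself. The structural reason is that Reedy fibrancy only constrains the matching maps, i.e.\ data indexed by \emph{faces}; the degeneracy $s_0$ appearing in $s=s_0d_1d_1$ is simply not controlled by it -- the paper makes the analogous caveat right after the homotopy-invariance lemma, for the map $\Lambda^2_0\setminus X$. Consequently the strict equalizer $J^{(2)}\setminus\tilde X$ need not compute the homotopy pullback that $\mathrm{Inv}$ computes, and your chain $\mathrm{Inv}(X)\simeq\mathrm{Inv}(\tilde X)\simeq J^{(2)}\setminus\tilde X$ breaks at the second link.

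The paper's proof avoids this by \emph{not} taking an arbitrary Reedy fibrant replacement: at level $2$ it factors the augmented map $(\partial,s_0d_1d_1)\colon X_2\rightarrow M_2X\times_{X_0}\Delta^{\ast}X_1$ (rather than just $\partial$) into an acyclic cofibration followed by a fibration, and takes $\tilde X_2$ to be the intermediate object; levels $0,1$ are kept and levels $\geq 3$ are replaced in the standard way. Composing with the projection to $M_2X$ shows $\tilde X$ is still Reedy fibrant, while by construction $(\Delta^{\ast}\tilde d_1,\Delta^{\ast}\tilde s)$ is now a fibration, being the composite of the pulled-back fibration $\Delta^{\ast}\tilde X_2\twoheadrightarrow\Delta^{\ast}M_2X\times_{X_0}\Delta^{\ast}X_1$ with a projection that is a pullback of the fibration $\Delta^{\ast}M_2X\twoheadrightarrow\Delta^{\ast}X_1$. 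Once that fibrancy is secured, the comparison you set up -- pulling back the same cospan either along the strict diagonal or along the path-object fibration, and invoking homotopy invariance of the constructions -- is essentially the paper's argument and goes through, including the identification over $X_1\times_{X_0^2}X_1$ via the outer faces. So the missing idea is precisely this modified level-$2$ factorization; without it the lemma's statement cannot be reached by a generic Reedy replacement.
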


\begin{proof}
We define $\tilde{X}$ following the general recursive construction of Reedy fibrant replacement of simplicial objects. 
We start with $(\tilde{X})_0=X_0$ and $(\tilde{X})_1=X_1$ with the same boundaries and the same degeneracy, since $X$ is 
sufficiently fibrant. At the next degree, instead of taking an arbitrary factorization of the matching map
$\partial:=(d_2,d_1,d_0)\colon X_2\rightarrow M_2 X$ with $M_2 X=M_2(\tilde{X})$ as usual, we factor the pair
\[(\partial,s_0d_1d_1)\colon X_2\rightarrow M_2 X\times_{X_0}\Delta^{\ast}X_1\]
into an acyclic cofibration followed by a fibration in $\mathbb{M}$.
\begin{align}\label{defreedy2fac}
\begin{gathered}
\xymatrix{
 & \tilde{X}_2\ar@{->>}[dr]^{} & & \\
X_2\ar@{^(->}[ur]^{\rotatebox[origin=c]{45}{$\sim$}}\ar[rr]_(.35){(\partial,s_0d_1d_1)} & & M_2 X\times_{X_0}\Delta^{\ast}X_1\ar@{->>}[r]_(.65){\pi_1} & M_2 X.\\
}
\end{gathered}
\end{align}
Since $\Delta^{\ast}X_1$ is fibrant over $X_0$, the object $\tilde{X}_2$ also factors the boundary map
$\partial\colon X_2\rightarrow M_2 X$ into the same acyclic cofibration followed by the composite fibration
$\tilde{X}_2\twoheadrightarrow M_2 X\times_{X_0}\Delta^{\ast}X_1\twoheadrightarrow M_2 X$. We continue 
choosing the $(\tilde{X})_n$ for $n\geq 3$ inductively by standard procedure of Reedy fibrant replacement with no 
further restrictions (see e.g.\ \cite[Theorem 5.2.5]{hovey}). Thus, $\tilde{X}$ is a Reedy fibrant replacement of $X$.

By Remark~\ref{remsegalhmtpy}, the simplicial object $\tilde{X}$ is still a Segal object in $\mathbb{C}$. 
Since $X_2$ and $\tilde{X}_2$ are both fibrant over $X_0\times X_0$, their pullbacks along the diagonal
$\Delta\colon X_0\rightarrow X_0\times X_0$ are still homotopy-equivalent to each other.
If we denote the maps $d_1,s\colon X_2\rightarrow X_1$ from (\ref{typeoflinv}) defined for $\tilde{X}$ by
$\tilde{d_1},\tilde{s}\colon\tilde{X}_2\rightarrow\tilde{X}_1$, respectively, we obtain factorizations of (\ref{diageqdj}) as follows 
by construction.
\[\xymatrix{
\mathrm{Equalizer}(\Delta^{\ast}d_1,\Delta^{\ast}s)\ar[d]\ar[r]\ar@{}[dr]|(.3){\pbs} & \mathrm{Inv} X\ar@{}[dr]|(.3){\pbs}\ar[d]^{\rotatebox[origin=c]{90}{$\sim$}}\ar@{->>}[r] & \Delta^{\ast}X_2\ar[d]^{\rotatebox[origin=c]{90}{$\sim$}}\ar@/^3pc/[dd]^{(\Delta^{\ast}d_1,\Delta^{\ast}s)}\\
J^{(2)}/\tilde{X}\ar@{}[dr]|(.3){\pbs}\ar@{->>}[d]\ar@{^(->}[r]^{\sim} & \mathrm{Inv}(\tilde{X})\ar@{}[dr]|(.3){\pbs}\ar@{->>}[d]\ar@{->>}[r] & \Delta^{\ast}\tilde{X}_2\ar@{->>}[d]|{(\Delta^{\ast}\tilde{d_1},\Delta^{\ast}\tilde{s})} \\
\Delta^{\ast}X_1\ar@{^(->}[r]^{\sim}\ar@/_2pc/[rr]_{\mathrm{diag}}& P_{X_0}(\Delta^{\ast}X_1)\ar@{->>}[r] & \Delta^{\ast}X_1\times_{X_0}\Delta^{\ast}X_1
}\]
Hence, we see that whether to construct $\mathrm{Inv} X$ or $J^{(2)}\setminus\tilde{X}$ is a matter of 
fibrantly replacing either of the two legs of the outer square in (\ref{diageqdj}). We thus obtain a composite homotopy-equivalence of 
the form
\[\mathrm{Inv} X\simeq J^{(2)}\setminus\tilde{X}.\]
This equivalence can be seen to be defined over
$X_1\times_{X_0^2} X_1=\tilde{X}_1\times_{\tilde{X}_0^2}\tilde{X}_1$ by factoring the 
vertical map $(\Delta^{\ast}\tilde{d_1},\Delta^{\ast}\tilde{s})$ through the maps (\ref{defreedy2fac}) after reindexing 
along the diagonal $\Delta$ as well.
\end{proof}

\begin{remark}\label{remreedycolims} In this remark we address the technical insufficiency of general type theoretic fibration 
categories for the proof of Lemma~\ref{intc=simpc_interm}, and hence for this entire section.
Namely, the concise reference to ``the standard procedure of Reedy fibrant replacement" (given in \cite[Theorem 5.2.5]{hovey}) in 
order to define the simplicial object $\tilde{X}$ relies on the existence of certain 
colimits. More precisely, it requires the construction of latching objects and 
pushouts along latching maps of the form $L_n X\rightarrow X_n$ for sufficiently fibrant $X\in s\mathbb{C}$, $n\geq 0$, 
in the ambient model category $\mathbb{M}$. These finite colimits do not so much assure Reedy fibrancy of
$\tilde{X}$ (which is imposed by fibrancy of the dual matching maps and which do exist in any type theoretic fibration 
category $\mathbb{C}$), but rather that $\tilde{X}$ is a (strict) simplicial object in $\mathbb{C}$ in the first place. 
Indeed, a recursive definition of $\tilde{X}$ via factorization of only the composite
$X_n\rightarrow M_n X\rightarrow M_n\tilde{X}$ into some
$X_n\overset{\sim}{\hookrightarrow}\tilde{X}_n\twoheadrightarrow M_n\tilde{X}$ does yield boundaries and degeneracies 
for $\tilde{X}$ up to level $n$ that satisfy all simplicial identities on the nose but potentially the ones which 
govern the composition of higher degeneracies. These can only be verified to hold up to homotopy, and there is no 
apparent way to strictify. This is the reason for the resort to semi-simplicial objects and 
the study of semi-Segal objects in type theory, see e.g.\ \cite[Section 4.6]{capriottithesis}. While such semi-simplicial 
objects allow for the development of a notion of univalence as in Section~\ref{secunivsegal} (see
\cite[Section 4.6.4]{capriottithesis}, there called completeness), the definition of completeness via
Corollary~\ref{defcompletett} intrinsically uses the degeneracies $s_0\colon [0]\rightarrow[1]$ and
$s_i\colon [1]\rightarrow [2]$. Definition~\ref{defcompleteJ} itself requires all degeneracies even.

A compromise work-around with semi-simplicial objects which exhibit level $0$ and $1$ degeneracies appears to be 
possible in principle, but makes the relation to the constructions in Section~\ref{seccompleteness} rather awkward, on top of being a 
rather awkward notion on its own. Indeed, Lemma~\ref{intc=simpc_interm} has to be proven 
for simplicial objects in logical model categories explicitly in either case in order to make the connection to
Section~\ref{seccompleteness}, and so it appears that little would be gained	 by the extra generality. 
\end{remark}

It is reasonable to expect that the constructions used to define univalence and completeness are
homotopy-invariant, and they are indeed in the following sense.
\begin{lemma}\label{compltehtyinv}
Suppose $X$, $Y\in s\mathbb{C}$ are sufficiently fibrant and pointwise homotopy-equivalent.
\begin{enumerate}
\item There is a homotopy-equivalence $\mathrm{Inv} X\simeq\mathrm{Inv} Y$ such that the square
\[\xymatrix{
\mathrm{Inv}X\ar@{->>}[d]_{(\mathrm{Linv}(X),\mathrm{Rinv}(X))}\ar[r]^{\simeq} & \mathrm{Inv}Y\ar@{->>}[d]^{{(\mathrm{Linv}(Y),\mathrm{Rinv}(Y))}} \\
X_1\times_{X_0^2} X_1\ar[r]_{\simeq} & Y_1\times_{Y_0^2} Y_1
}\]
commutes.
\item If $X$ and $Y$ are furthermore Reedy fibrant, then $J^{(2)}\setminus X\simeq J^{(2)}\setminus Y$ analogously.
\end{enumerate}
\end{lemma}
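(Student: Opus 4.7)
The plan is to observe that in both parts the objects $\mathrm{Inv}(X)$ and $J^{(2)}\setminus X$ are built entirely from finite pullbacks along fibrations between fibrant objects and from path object factorizations, and are therefore homotopy invariant under pointwise equivalences in the logical model category $\mathbb{M}$. The key general principle I would use is that, in a logical model category whose fibrant objects are cofibrant, the Frobenius property together with the fact that weak equivalences between fibrant objects are homotopy equivalences ensures that pullback along a fibration with fibrant base preserves such equivalences. This is essentially a Ken Brown style argument internal to the type theoretic fibration category $\mathbb{M}^f$.

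For Part~1, I would trace Diagram~(\ref{diageqdj}) stepwise. Given a pointwise equivalence $f\colon X\to Y$ between sufficiently fibrant objects, sufficient fibrancy promotes $(d_1,d_0)\colon X_1\twoheadrightarrow X_0\times X_0$ and the outer boundaries $d_0,d_2\colon X_2\twoheadrightarrow X_1$ to fibrations (the latter since they factor through the $2$-Segal map). The reindexings $\Delta^{\ast}X_1$ and $\Delta^{\ast}X_2$ are thus pullbacks of fibrations, and the path object $P_{X_0}(\Delta^{\ast}X_1)$ arises from a standard factorization of the diagonal. Finally, $\mathrm{Inv}(X)$ is itself a pullback along the fibration $(\Delta^{\ast}d_1,\Delta^{\ast}s)$. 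Assembling the comparison maps induced by $f_0,f_1,f_2$ through each of these stages yields a zig-zag of equivalences $\mathrm{Inv}(X)\simeq\mathrm{Inv}(Y)$, and the naturality of every intermediate comparison map delivers the commuting square over $X_1\times_{X_0^2}X_1\to Y_1\times_{Y_0^2}Y_1$.

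For Part~2, with $X,Y$ additionally Reedy fibrant, the pushout presentation~(\ref{diagdefJ2}) computes $J^{(2)}\setminus X\cong X_1\times_{\Lambda_0^2\setminus X}X_2$, where Reedy fibrancy forces the horn matching map $X_2\twoheadrightarrow\Lambda_0^2\setminus X$ to be a fibration and the auxiliary object $\Lambda_0^2\setminus X$ to be itself a finite limit of fibrations over $X_0,X_1$. A pointwise equivalence between Reedy fibrant objects induces equivalences between all these data, and applying the same homotopy invariance principle repeatedly yields $J^{(2)}\setminus X\simeq J^{(2)}\setminus Y$, again compatibly with the projection to the object of conversely directed edges.

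The main obstacle I foresee is not conceptual but logistical: one must ensure that the specific sequence of comparison maps combining pullback and path-object comparisons can be made simultaneously natural so that the final equivalence sits over the required commuting square. The choice of path object factorizations $P_{X_0}(\Delta^{\ast}X_1)$ versus $P_{Y_0}(\Delta^{\ast}Y_1)$ on the two sides must be reconciled; this is handled by the standard fact that any two path objects of homotopy equivalent objects are canonically equivalent over the domain of the diagonal they factor. With this bookkeeping in place, the remainder is a routine naturality check on finite homotopy limits of fibrations between fibrant objects.
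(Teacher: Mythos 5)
Your proposal is correct and follows essentially the same route as the paper: Part 1 by homotopy invariance of the pullbacks in Diagram~(\ref{diageqdj}) (the paper invokes right properness where you invoke the equivalent gluing principle for fibrations between fibrant objects), and Part 2 by the same decomposition $J^{(2)}\setminus X\cong X_1\times_{\Lambda_0^2\setminus X}X_2$ with the same key observation that Reedy fibrancy (and not mere sufficient fibrancy) makes $X_2\rightarrow\Lambda_0^2\setminus X$ a fibration. Your extra bookkeeping on reconciling the path objects $P_{X_0}(\Delta^{\ast}X_1)$ and $P_{Y_0}(\Delta^{\ast}Y_1)$ is exactly the detail the paper leaves implicit.
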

\begin{proof}
Part 1.\ is easily seen from Diagram (\ref{diageqdj}) by exploiting sufficient fibrancy of $X$ and $Y$ and right-properness of
$\mathbb{M}$.

For part 2., given an equivalence $X\xrightarrow{\sim} Y$, again by virtue of right-properness of $\mathbb{M}$ we obtain an 
equivalence of pullbacks as follows.
\[\xymatrix{
		& J^{(2)}\setminus Y\ar[rr]\ar@{->>}'[d][dd]\ar@{}[ddrr]|(.2){\pbs} &		& Y_2\ar@{->>}[dd] \\
J^{(2)}\setminus X\ar[rr]\ar@{->>}[dd]\ar[ur]^{\rotatebox[origin=c]{35}{$\sim$}}\ar@{}[ddrr]|(.2){\pbs} &		  & X_2\ar@{->>}[dd]\ar[ur]^{\rotatebox[origin=c]{35}{$\sim$}} &	  \\
		& Y_1\ar'[r][rr]	  &		& \Lambda_0^2\setminus Y\\
X_1\ar[rr]\ar[ur]^{\rotatebox[origin=c]{35}{$\sim$}}	  & 		  & \Lambda_0^2\setminus X\ar[ur]^{\rotatebox[origin=c]{35}{$\sim$}}
}\]
\end{proof}

The assumption of Reedy fibrancy in the proof of Lemma~\ref{compltehtyinv} cannot be replaced with sufficient fibrancy 
only, because it uses that both functors
$\blank\setminus X, \blank\setminus Y\colon\mathbf{S}^{op}\rightarrow\mathbb{C}$ send the 2-horn inclusion
$\Lambda_0^2\hookrightarrow\Delta^2$ to a fibration.

\begin{proposition}\label{comparelevel2}
Let $X\in s\mathbb{C}$ be a Segal object in $\mathbb{C}$. Then for any Reedy fibrant replacement
$r\colon X\rightarrow \mathbb{R}X$, we obtain a commutative square as follows.
\[
\xymatrix{
\mathrm{Inv}(X)\ar[r]^{\simeq}\ar[d]_{(\mathrm{Linv(X)},\mathrm{Rinv}(X))} & J^{(2)}\setminus \mathbb{R}X\ar[d]^{(d_2,d_0)} \\
X_1\times_{X_0^2}X_1\ar[r] & \mathbb{R}X_1\times_{\mathbb{R}X_0^2}\mathbb{R}X_1
}
\]
\end{proposition}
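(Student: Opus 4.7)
The plan is to bootstrap the statement from the particular Reedy fibrant replacement constructed in Lemma~\ref{intc=simpc_interm} to an arbitrary one by means of the homotopy invariance supplied by Lemma~\ref{compltehtyinv}. First, I invoke Lemma~\ref{intc=simpc_interm} to obtain a specific Reedy fibrant replacement $\tilde{X}$ of $X$ for which a homotopy equivalence $e\colon \mathrm{Inv}(X) \xrightarrow{\sim} J^{(2)}\setminus\tilde{X}$ is already defined over the pullback $X_1 \times_{X_0^2} X_1 = \tilde{X}_1 \times_{\tilde{X}_0^2}\tilde{X}_1$ of conversely directed edges. This directly supplies the asserted square in the case $\mathbb{R}X = \tilde{X}$.

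Second, I compare $\tilde{X}$ with an arbitrary Reedy fibrant replacement $r\colon X \to \mathbb{R}X$. Both $\tilde{X}$ and $\mathbb{R}X$ are Reedy fibrant objects receiving pointwise acyclic cofibrations from $X$, so standard lifting in the Reedy model structure (together with the assumption that fibrant objects in $\mathbb{M}$ are cofibrant, which guarantees that weak equivalences between such are homotopy equivalences) produces a pointwise homotopy equivalence $\tilde{X} \simeq \mathbb{R}X$ over $X$. Applying Lemma~\ref{compltehtyinv}(2) to this equivalence yields a homotopy equivalence $J^{(2)}\setminus\tilde{X} \simeq J^{(2)}\setminus\mathbb{R}X$, which I then compose with $e$ to obtain the desired homotopy equivalence $\mathrm{Inv}(X)\simeq J^{(2)}\setminus\mathbb{R}X$.

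The main subtlety is to verify that the composite equivalence lives \emph{over} the bottom row of the target square, that is, that it is compatible with the boundary projections $(\mathrm{Linv}(X),\mathrm{Rinv}(X))$ on the left and $(d_2,d_0)$ on the right. This follows from naturality of the functor $J^{(2)}\setminus(\blank)$ applied to a simplicial map: since a pointwise equivalence $\tilde{X}\to\mathbb{R}X$ commutes by definition with the simplicial structure maps, in particular with the outer boundaries $d_0$ and $d_2$ out of the 2-simplices, the equivalence $J^{(2)}\setminus\tilde{X}\simeq J^{(2)}\setminus\mathbb{R}X$ sits over a compatible equivalence $\tilde{X}_1\times_{\tilde{X}_0^2}\tilde{X}_1\simeq\mathbb{R}X_1\times_{\mathbb{R}X_0^2}\mathbb{R}X_1$ of conversely-directed-edge objects. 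Composing with the bottom identification $X_1\times_{X_0^2}X_1 = \tilde{X}_1\times_{\tilde{X}_0^2}\tilde{X}_1$ from Lemma~\ref{intc=simpc_interm} and the map induced by $r$, one obtains precisely the bottom horizontal arrow of the required square. Commutativity of the square then reduces to the commutativity already asserted in the square produced by Lemma~\ref{intc=simpc_interm}, pasted with a naturality square for $J^{(2)}\setminus(\blank)$ along $\tilde{X}\to\mathbb{R}X$.
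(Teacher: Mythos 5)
Your argument is correct, but it is arranged differently from the paper's. You apply the special replacement construction of Lemma~\ref{intc=simpc_interm} to $X$ itself, obtaining $\mathrm{Inv}(X)\simeq J^{(2)}\setminus\tilde{X}$, and then bridge the two Reedy fibrant replacements $\tilde{X}$ and $\mathbb{R}X$ by a lifting argument before invoking Lemma~\ref{compltehtyinv}.2. The paper instead never compares two replacements of $X$ directly: it first uses Lemma~\ref{compltehtyinv}.1 to transfer $\mathrm{Inv}(X)\simeq\mathrm{Inv}(\mathbb{R}X)$ along the given replacement $r$, then applies Lemma~\ref{intc=simpc_interm} to $\mathbb{R}X$ (rather than to $X$), so that the comparison map $\mathbb{R}X\rightarrow\widetilde{(\mathbb{R}X)}$ needed for Lemma~\ref{compltehtyinv}.2 comes for free from the construction, with no lifting required. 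What the paper's order buys is that it only uses the data stated in Lemma~\ref{intc=simpc_interm}; your route additionally relies on the fact (visible only in the proof of that lemma, not its statement) that the map $X\rightarrow\tilde{X}$ is a \emph{Reedy} acyclic cofibration -- your phrase ``pointwise acyclic cofibrations'' is not quite the right hypothesis for lifting against the Reedy fibration $\mathbb{R}X\rightarrow 1$, though the claim does hold since the relative latching maps at levels $\leq 2$ are identities or the chosen acyclic cofibration and the higher levels follow the standard procedure. (Also, an arbitrary Reedy fibrant replacement $r$ need only be a pointwise weak equivalence, not a cofibration, but your lift only uses fibrancy of $\mathbb{R}X$, so this is harmless.) On the other hand, your arrangement makes it slightly more transparent that the bottom arrow of the square is the one induced by $r$, since the lift $\tilde{X}\rightarrow\mathbb{R}X$ agrees with $r$ in levels $0$ and $1$, where $\tilde{X}$ and $X$ coincide.
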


\begin{proof}
Let $X$ be a Segal object and let $\mathbb{R}X$ be a Reedy fibrant replacement of $X$. 
By Lemma \ref{compltehtyinv}.1, we obtain a homotopy-equivalence
\[\mathrm{Inv}X\simeq\mathrm{Inv}\mathbb{R}X\]
over the two instances of $\mathrm{Linv}$ and $\mathrm{Rinv}$.
Then consider $\widetilde{(\mathbb{R}X)}$, the object from Lemma \ref{intc=simpc_interm} constructed from
$\mathbb{R}X$ (instead from $X$) such that
$\mathrm{Linv}\mathbb{R}X\simeq J^{(2)}\setminus\widetilde{(\mathbb{R}X)}$ over
$\mathbb{R}X_1\times_{\mathbb{R}X_0^2} \mathbb{R}X_1$. But both $\mathbb{R}X$ and
$\widetilde{(\mathbb{R}X)}$ are Reedy fibrant, hence an application of Lemma~\ref{compltehtyinv}.2 finishes the proof.
\end{proof}

We conclude this section with the main result.

\begin{theorem}\label{intc=simpc}
Let $X\in s\mathbb{C}$ be a Segal object. Then the following are equivalent. 
\begin{enumerate}
\item $X$ is univalent.
\item For any Reedy fibrant replacement $\mathbb{R}X$ of $X$, $\mathbb{R}X$ is complete.
\end{enumerate}
In particular, if $X$ is Reedy fibrant, $X$ is univalent if and only if it is complete.
\end{theorem}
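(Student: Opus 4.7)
The plan is to exhibit a homotopy equivalence
\[\mathrm{Equiv}(X)\simeq K\setminus\mathbb{R}X\]
of fibrations over $X_1\cong\mathbb{R}X_1$, compatible with the two canonical ``identity'' sections from $X_0$. Once established, univalence of $X$ and completeness of $\mathbb{R}X$ both reduce to the same acyclicity condition and the equivalence of the two statements will follow by applying two-out-of-three twice.

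First I unfold both sides into analogous pullbacks. Using that $\blank\setminus\mathbb{R}X$ carries colimits to limits, the pushout presentation $K=J^{(2)}\cup_{\Delta^1}J^{(2)}$ gives
\[K\setminus\mathbb{R}X\cong(J^{(2)}\setminus\mathbb{R}X)\times_{\mathbb{R}X_1}(J^{(2)}\setminus\mathbb{R}X),\]
where the two copies of $J^{(2)}\setminus\mathbb{R}X$ are fibered over $\mathbb{R}X_1$ via the two orientations of the shared $\Delta^1$-edge inside each $J^{(2)}$, that is, via $d^{0}$ of the first 2-simplex and $d^{2}$ of the second. Correspondingly, $\mathrm{Equiv}(X)=\mathrm{Linv}(X)\times_{X_1}\mathrm{Rinv}(X)$ is the pullback of two copies of $\mathrm{Inv}(X)$ fibered over $X_1$ via $d_0$ and $d_2$ respectively, by the very definitions preceding (\ref{defequtt}).

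Second, I combine these presentations with Proposition~\ref{comparelevel2}. That proposition yields a homotopy equivalence $\mathrm{Inv}(X)\simeq J^{(2)}\setminus\mathbb{R}X$ over $X_1\times_{X_0^2}X_1$, compatible with the $(d_2,d_0)$-projections on both sides. Taking the pullback over $X_1$ of two copies of this equivalence produces the sought homotopy equivalence $\mathrm{Equiv}(X)\simeq K\setminus\mathbb{R}X$ of fibrations over $X_1$. Right properness of $\mathbb{M}$ (which holds in any logical model category with cofibrant fibrant objects) guarantees that this pullback of homotopy equivalences along fibrations between fibrant objects is again a homotopy equivalence.

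Third, I verify that the lift $s_0\colon X_0\to\mathrm{Equiv}(X)$ of Definition~\ref{defunivalenceso} corresponds, up to homotopy over $X_1$, to the section $X_0=\Delta^0\setminus\mathbb{R}X\to K\setminus\mathbb{R}X$ induced by the unique map $K\to\Delta^0$. Both maps factor through the totally degenerate data $s_1 s_0 x\in X_2$ (whose edges are all degenerate at $x$ and whose $d_1$-composite is strictly $s_0 x$), which strictly satisfies every requirement imposed on either side by the constructions of Lemma~\ref{intc=simpc_interm} and Proposition~\ref{comparelevel2}. Naturality of those constructions in $X$ makes the identification precise. Denoting this common section by $\iota$ and the equivalence from Step 2 by $\phi$, the section of the fibration $\{1\}\setminus\mathbb{R}X\colon K\setminus\mathbb{R}X\to\mathbb{R}X_0=X_0$ induced by $K\to\Delta^0$ is a retraction: $(\{1\}\setminus\mathbb{R}X)\circ\iota=\mathrm{id}_{X_0}$. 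A double application of two-out-of-three then yields
\begin{align*}
X\text{ is univalent}&\Leftrightarrow s_0\colon X_0\to\mathrm{Equiv}(X)\text{ is a homotopy equivalence}\\
&\Leftrightarrow \iota\colon X_0\to K\setminus\mathbb{R}X\text{ is a homotopy equivalence}\\
&\Leftrightarrow \{1\}\setminus\mathbb{R}X\text{ is a weak equivalence, i.e., acyclic}\\
&\Leftrightarrow \mathbb{R}X\text{ is complete (Corollary~\ref{defcompletett}).}
\end{align*}
The ``in particular'' follows by taking $\mathbb{R}X=X$. The main obstacle is the third step: even though the identifications on both sides are morally ``the constant $K$-diagram at $x$'', one is defined via a lift through an acyclic-cofibration--fibration factorization inside the homotopy equalizer construction of $\mathrm{Inv}(X)$, while the other is induced by a collapse of the weight through the Reedy factorization; matching them precisely requires unwinding Diagram~(\ref{diageqdj}) and the Reedy replacement (\ref{defreedy2fac}), but in both cases only the totally degenerate 2-simplex plays a role so the identification is forced.
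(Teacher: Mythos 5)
Your argument is essentially the paper's own proof: both reduce the theorem to a homotopy equivalence $\mathrm{Equiv}(X)\simeq K\setminus\mathbb{R}X$ obtained by pulling back (two copies of) the comparison of Proposition~\ref{comparelevel2}, and then conclude by two-out-of-three using Corollary~\ref{defcompletett}. The only deviations are cosmetic: for an arbitrary Reedy fibrant replacement one only has weak equivalences $X_0\xrightarrow{\sim}\mathbb{R}X_0$ and $X_1\xrightarrow{\sim}\mathbb{R}X_1$ rather than your identifications $X_0=\Delta^0\setminus\mathbb{R}X$ and $X_1\cong\mathbb{R}X_1$ (the paper handles this base change by inserting $\mathrm{Equiv}(\mathbb{R}X)$ as a middle term via Lemma~\ref{compltehtyinv}), and your section-matching Step 3 can be bypassed entirely, since univalence is equivalent, via the retraction $\mathrm{Equiv}(X)\twoheadrightarrow X_1\twoheadrightarrow X_0$ of the lift $s_0$, to acyclicity of that composite fibration, so it suffices that your comparison equivalence lies over $X_1\rightarrow\mathbb{R}X_1$.
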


\begin{proof}
Given a Segal object $X\in s\mathbb{C}$ together with a Reedy fibrant replacement
$X\overset{\sim}{\hookrightarrow}\mathbb{R}X$ in $s\mathbb{C}$, by Lemma~\ref{compltehtyinv} and 
Proposition~\ref{comparelevel2} we obtain a diagram of the following form.
\[\xymatrix{
 & \mathrm{Inv}X\ar@{->>}[dd]|\hole_(.3){\mathrm{RInv}(X)}\ar[rr]^{\simeq} & & \mathrm{Inv}\mathbb{R}X\ar@{->>}[dd]|\hole_(.3){\mathrm{RInv}(\mathbb{R}X)}\ar[rr]^{\simeq} & & J^{(2)}\setminus\mathbb{R}X\ar@{->>}@/^2pc/[ddll]^{d_0\setminus\mathbb{R}X} \\
\mathrm{Inv}X\ar@{->>}@/_/[dr]_{\mathrm{LInv}(X)}\ar[rr]^(.6){\simeq} & & \mathrm{Inv}\mathbb{R}X\ar@{->>}@/_/[dr]_(.3){\mathrm{RInv}(\mathbb{R}X)}\ar[rr]^(.6){\simeq} & & J^{(2)}\setminus\mathbb{R}X\ar@{->>}@/^/[dl]_{d_2\setminus\mathbb{R}X} & \\
 & X_1\ar@{^(->}[rr]^{\sim} & & \mathbb{R}X_1
}\]
This induces a homotopy-equivalence on pullbacks
\[\mathrm{Equiv}X\xrightarrow{\simeq}\mathrm{Equiv}\mathbb{R}X\xrightarrow{\simeq}(J^{(2)}\setminus\mathbb{R}X\times_{\mathbb{R}X_1} J^{(2)}\setminus\mathbb{R}X) \cong K\setminus\mathbb{R}X\]
over $X_1\overset{\sim}{\hookrightarrow}\mathbb{R}X_1$. Therefore
\[\xymatrix{
\mathrm{Equiv}X\ar[r]^{\simeq}\ar@{->>}[d] & \mathrm{Equiv}\mathbb{R}X\ar[r]^{\simeq}\ar@{->>}[d] & K\setminus\mathbb{R}X\ar@{->>}@/^/[dl] \\
X_0\ar@{^(->}[r]^{\sim} &\mathbb{R}X_0 
}\]
commutes, too, and the statement follows directly from 2-out-of-3 for homotopy-equivalences.
\end{proof}

\begin{corollary}\label{intc=simpcforfibs}
Let $p\colon E\twoheadrightarrow B$ be a fibration in $\mathbb{C}$. Then the following are equivalent.
\begin{enumerate}
\item The fibration $p$ is a univalent type family in $\mathbb{C}$.
\item The nerve $N(p)$ is a univalent Segal object.
\item For any Reedy fibrant replacement $X_p$ of $N(p)$, the Segal object $X_p$ is complete.
\end{enumerate}
\end{corollary}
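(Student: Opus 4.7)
The plan is to observe that this corollary is essentially a repackaging of two results already established in the paper: Lemma~\ref{lemmasimpunivalence} and Theorem~\ref{intc=simpc}. So the proof should simply chain these together, with a brief check that the intermediate object $N(p)$ falls within the hypotheses of Theorem~\ref{intc=simpc}.

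First, I would argue the equivalence (1) $\Leftrightarrow$ (2). This is precisely the content of Lemma~\ref{lemmasimpunivalence}: by its proof, the type $\mathrm{Equiv}(N(p))$ of internal equivalences (in the sense of Definition~\ref{defunivalenceso}) associated to the nerve $N(p)$ is canonically isomorphic to the type $\mathrm{Equiv}(p)$ of fiberwise equivalences associated to $p$ as a type family, and the degeneracy $N(p)_0 \to \mathrm{Equiv}(N(p))$ corresponds to the reflexivity map $r \colon B \to \mathrm{Equiv}(p)$. Hence $N(p)_0 \to \mathrm{Equiv}(N(p))$ is a homotopy equivalence iff $r$ is, which is the syntactic univalence of $p$.

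Next, for (2) $\Leftrightarrow$ (3), I would note that $N(p)$ is a \emph{strict} Segal object in $\mathbb{C}$ by the discussion immediately following Proposition~\ref{propnervesegal} (its Segal maps are even isomorphisms, and sufficient fibrancy was verified there using that $(s,t)\colon\mathrm{Fun}(p) \twoheadrightarrow B\times B$ is a fibration). In particular, $N(p)$ lies in $s\mathbb{C}$ and is a Segal object in the sense of Definition~\ref{defsegalob}, so Theorem~\ref{intc=simpc} applies with $X := N(p)$. This directly yields that $N(p)$ is univalent if and only if, for any Reedy fibrant replacement $X_p$ of $N(p)$, the Segal object $X_p$ is complete.

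Combining these two chains, (1) $\Leftrightarrow$ (2) $\Leftrightarrow$ (3) follows. There is no real obstacle here beyond bookkeeping — the heavy lifting (the translation between the internal type of equivalences and $K$-weighted limits via Proposition~\ref{comparelevel2} and Lemma~\ref{intc=simpc_interm}) has already been done in the proof of Theorem~\ref{intc=simpc}, and the nerve-to-type-family translation was done in Lemma~\ref{lemmasimpunivalence}. The only point requiring a line of justification is that $N(p)$ is indeed a Segal object so that Theorem~\ref{intc=simpc} applies, and this was recorded directly after Proposition~\ref{propnervesegal}.
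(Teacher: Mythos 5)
Your proposal is correct and matches the paper's own proof, which simply cites Lemma~\ref{lemmasimpunivalence} for (1)$\Leftrightarrow$(2) and Theorem~\ref{intc=simpc} for (2)$\Leftrightarrow$(3). Your extra remark that $N(p)$ is a (strict) Segal object, recorded after Proposition~\ref{propnervesegal}, is exactly the bookkeeping needed to apply the theorem, so nothing is missing.
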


\begin{proof}
Follows immediately from Lemma~\ref{lemmasimpunivalence} and Theorem~\ref{intc=simpc}.
\end{proof}

\begin{remark}\label{remrpmodcats}
If one \emph{defines} univalence of a Segal object in terms of the object $\mathrm{Equiv}(X)$ and homotopy-invertibility of its 
associated lift (\ref{diagequivdeg}), then the proof of Theorem~\ref{intc=simpc} applies to every model category that is right-proper 
(which is needed for Lemma~\ref{compltehtyinv}). Although in this case univalence lacks syntactical motivation, it still corresponds 
to the $(\infty,1)$-categorical notion of completeness as studied by Rasekh (\cite[Definition 2.47]{rasekh}).
Theorem~\ref{intc=simpc} is therefore still meaningful in that it shows equivalence of $(\infty,1)$-categorical completeness 
of a Reedy fibrant Segal object $X\in s\mathbb{M}$, and completeness of $X$ in the sense of
Definition~\ref{defcompleteJ}.
\end{remark}

\section{Associated completions and localizations}\label{secunivcompl}

In \cite{univalentcompletion}, van den Berg and Moerdijk introduced a notion of \emph{univalent completion} of a 
fibration $p$ to a univalent fibration $u(p)$ in the Quillen model structure $(\mathbf{S},\mathrm{Kan})$ on simplicial 
sets. Likewise, there is a notion of Rezk-completion of Segal spaces, arising as the fibrant replacement of a Segal 
space in the model category $(s\mathbf{S},\mathrm{CS})$ for complete Segal spaces. In this section, we generalize both 
notions and use Theorem~\ref{intc=simpc} to show that univalent completion is a special case of Rezk-completion whenever 
both are defined.

We therefore introduce a notion of BM-equivalences between fibrations in suitable type theoretic fibration categories
$\mathbb{C}$ inspired by \cite{univalentcompletion}, and a notion of DK-equivalences between Segal objects which 
naturally generalizes the notion of Dwyer-Kan equivalences in \cite{rezk}.
This is the content of Sections~\ref{secsubunivcompl} and \ref{secsubrezkcompl}, respectively. We then show in
Section~\ref{secsubcomparecompls} that for such suitable type theoretic fibration categories $\mathbb{C}$, the nerve 
functor assignment $p\mapsto N(p)$ from Section~\ref{secunivsegal} gives rise to a functor between relative categories 
with respect to these two notions of equivalences, such that $p\rightarrow u(p)$ is the univalent completion of a 
fibration $p$ if and only if $N(p)\rightarrow N(u(p))$ is the Rezk-completion of $N(p)$. Motivated by this 
correspondence, and Rezk's and Barwick's characterization of complete Segal objects as the homotopically
DK-local Reedy fibrant Segal objects in a large class of model categories, we show in Section~\ref{secsubBMloc} 
that whenever $\mathbb{C}$ arises from a logical model category, then the univalent fibrations are exactly the BM-local fibrations in 
a suitable homotopical sense whenever univalent completion exists.

The main results of this section are summarised in the following theorem.

\begin{theorem}\label{thm5sum}
Let $\mathbb{C}$ be a type theoretic fibration category with $(-1)$-truncations. Then the nerve construction induces a pullback square
\begin{align*}
\begin{gathered}
\xymatrix{
(\mathcal{UF}^{\times},\mathcal{W})\ar[r]^(.45)N\ar@{^(->}[d]\ar@{}[dr]|(.3){\pbs} & (\mathrm{US}(\mathbb{C})^{\times},\mathcal{W})\ar@{^(->}[d]\\
(\mathcal{F}^{\times},\mathrm{BM})\ar[r]_(.45)N & (\mathrm{S}(\mathbb{C})^{\times},\mathrm{DK})
}
\end{gathered}
\end{align*}
of relative categories such that
\begin{enumerate}
\item the vertical arrows are relative inclusions;
\item the relative functor $N$ reflects DK-equivalences with univalent codomain to BM-equivalences with univalent 
codomain (and hence reflects Rezk-completion to univalent completion).
\item Suppose $\mathbb{C}$ furthermore satisfies function extensionality and fibrations in $\mathbb{C}$ have the homotopy 
lifting property. Given a univalent fibration $\pi\in\mathbb{C}$, the $\pi$-small univalent fibrations in $\mathbb{C}$ are 
exactly the $\pi$-small fibrations which are internally local with respect to all $\pi$-small $BM$-equivalences.
\end{enumerate}
\end{theorem}
\begin{proof}
Parts 1 and 2 are Proposition~\ref{thm5sum1}, Part 3 is Proposition~\ref{propintBMloc}.
\end{proof}

\begin{corollary}\label{cor5sum}
Suppose $\mathbb{C}$ furthermore arises as the category of fibrant objects of a logical model category whose fibrant 
objects are cofibrant. Then the Reedy fibrant nerve construction from Corollary~\ref{intc=simpcforfibs} 
induces a pullback square
\begin{align*}
\begin{gathered}
\xymatrix{
(\mathcal{UF}^{\times},\mathcal{W})\ar[r]^(.45){\mathbb{R}N}\ar@{^(->}[d]\ar@{}[dr]|(.3){\pbs} & (\mathrm{CS}(\mathbb{C})^{\times},\mathcal{W})\ar@{^(->}[d]\\
(\mathcal{F}^{\times},\mathrm{BM})\ar[r]_(.45){\mathbb{R}N} & (\mathrm{S}(\mathbb{C})_f^{\times},\mathrm{DK})
}
\end{gathered}
\end{align*}
of relative categories such that
\begin{enumerate}
\item the vertical arrows are relative inclusions;
\item the relative functor $\mathbb{R}N$ reflects DK-equivalences with complete codomain to BM-equivalences with univalent 
codomain (and hence reflects Rezk-completion to univalent completion);
\item given a univalent fibration $\pi\in\mathbb{C}$, the $\pi$-small univalent fibrations in $\mathbb{C}$ are 
exactly the $\pi$-small fibrations which are internally local with respect to all $\pi$-small $BM$-equivalences.
\end{enumerate}
\end{corollary}
\begin{proof}
Parts 1 and 2 are Proposition~\ref{thmcmpcompletion}. Part 3 follows from Theorem~\ref{thm5sum}.3, as both function 
extensionality (\cite[Remark 5.10]{shulmaninv}) and the homotopy lifting property are automatically satisfied in the given special 
case.
\end{proof}

Thus, Corollary~\ref{cor5sum}.3 relates directly to the fact 
that in a large class of model categories the complete Segal objects are exactly the homotopically DK-local Reedy 
fibrant Segal objects (see Remark~\ref{remend} for a further discussion).

\subsection{$(-1)$-truncations}

As before, in this section we work with type theoretic fibration categories $\mathbb{C}$, but we will additionally assume that
$\mathbb{C}$ has $(-1)$-truncations (Definition~\ref{defpbstablehtyimagefac}). Therefore we make the following definitions and 
observations. It may be worth to note that all statements in this subsection apply to Joyal's tribes in general, but we will stick to 
type theoretic fibration categories as these are our case of interest and allow for a more direct use of references. Thus, let
$\mathbb{C}$ be a type theoretic fibration category.

A fibration $p\colon E\twoheadrightarrow B$ in $\mathbb{C}$ is $(-1)$-truncated if and only if the path-object fibration 
$P_BE\twoheadrightarrow E\times_B E$ has a section. That is, if and only if the fibration
$P_BE\twoheadrightarrow E\times_B E$ is an acyclic fibration, or equivalently if and only if the diagonal
$\Delta\colon E\rightarrow E\times_B E$ is a homotopy-equivalence. Say that a map $f\colon E\rightarrow B$ is $(-1)$-truncated if, for 
any factorization $f=j p$ into an acyclic cofibration $j$ and a fibration $p$, the fibration $p$ is $(-1)$-truncated.

\begin{definition}\label{defhtyimage}
A \emph{$(-1)$-truncation} of a map $f\colon A\rightarrow B$ in a type theoretic fibration category $\mathcal{C}$ is a factorization $f=pj$ such that
\begin{enumerate}
\item $p\colon E\twoheadrightarrow B$ is a $(-1)$-truncated fibration in $\mathbb{C}$, and
\item whenever $f$ exhibits another factorization $f=qi$ for a $(-1)$-truncated morphism $q\colon C\rightarrow B$ in
$\mathbb{C}$, there is a morphism $l\colon E\rightarrow C$ together with a homotopy between $p\colon E\rightarrow B$ and 
$q\circ l\colon E\rightarrow B$.
\end{enumerate}
\end{definition}

In virtue of Lemma~\ref{lemmamono} below, data given as in Definition~\ref{defhtyimage}.2 implies homotopy-commutativity 
of not only the right hand side but both triangles in the square
\begin{align}\label{diagdefhtyimage}
\begin{gathered}
\xymatrix{
  & E\ar@{->>}[dr]^p\ar[dd]_l & \\
A\ar[dr]_{i}\ar[ur]^j &  & B. \\
  & E\sprime\ar[ur]_{q} & 
}
\end{gathered}
\end{align}
Furthermore, the condition on the right map in a $(-1)$-truncation to be a fibration is technically redundant, since any
$(-1)$-truncated morphism may be further factored through an acyclic cofibration followed by a $(-1)$-truncated 
fibration, and acyclic cofibrations admit homotopy-inverses (\cite[Lemma 3.6]{shulmaninv}). It yet saves us to perform this reduction 
every time it is necessary.

\begin{lemma}\label{lemmamono}
Let $\mathbb{C}$ be a type theoretic fibration category. A map $f\colon E\rightarrow B$ in $\mathbb{C}$ is $(-1)$-truncated if and 
only if the natural map
\[\mathtt{tr}_f\colon PE\rightarrow PB\times_{(B\times B)}(E\times E)\]
is a homotopy-equivalence. In other words, $f$ is $(-1)$-truncated if and only if transport of paths along $f$ yields an 
equivalence between paths in $E$ and paths in $B$ between endpoints in $E$. 
\end{lemma}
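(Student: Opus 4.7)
The proof proceeds in three stages. First, I would reduce to the case that $f$ is a fibration. Fix a factorization $f = p\circ j$ with $j\colon E\overset{\sim}{\hookrightarrow} E'$ an acyclic cofibration and $p\colon E'\twoheadrightarrow B$ a fibration, as permitted by the definition of $(-1)$-truncatedness. The naturality square for $\mathtt{tr}$ along $j$ has weak equivalences as its horizontal maps: $PE\to PE'$ is one since $j$ is a weak equivalence between fibrant objects, and $PB\times_{B\times B}(E\times E)\to PB\times_{B\times B}(E'\times E')$ is one by right-properness, as it arises by base change of the fibration $PB\times_{B\times B}(E'\times E')\twoheadrightarrow E'\times E'$ along the weak equivalence $E\times E\to E'\times E'$. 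Two-out-of-three then reduces the claim to $\mathtt{tr}_p$.

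Second, for a fibration $p\colon E\twoheadrightarrow B$, I would construct the pullback square
\[\xymatrix{
P_B E \ar[r] \ar@{->>}[d]_{\pi} & PE \ar@{->>}[d]^{\mathtt{tr}_p} \\
E\times_B E \ar[r]^{\iota} & PB\times_{B\times B}(E\times E)
}\]
in which the relative path object $P_B E$ is modelled by the strict pullback $PE\times_{PB} B$---that is, paths in $E$ whose image in $B$ is a reflexivity path---and $\iota$ is the base change along $E\times E\to B\times B$ of the reflexivity $r\colon B\overset{\sim}{\hookrightarrow}PB$. The map $\iota$ is a weak equivalence because $p$ being a fibration makes $E\times_B E$ a model of the homotopy pullback $PB\times_{B\times B}(E\times E)$.

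Third, since all four maps in the square are fibrations between fibrant objects, right-properness of $\mathbb{M}$ combined with two-out-of-three gives that $\mathtt{tr}_p$ is a homotopy equivalence iff $\pi\colon P_B E\twoheadrightarrow E\times_B E$ is one. Because $E\times_B E$ is cofibrant---by pullback-stability of cofibrations applied to $\emptyset\hookrightarrow E$ together with cofibrancy of fibrant objects---the fibration $\pi$ is a weak equivalence iff it is a trivial fibration iff it admits a section, which is by definition the condition that $p$ is $(-1)$-truncated.

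The main obstacle will be verifying the pullback square in the second stage: one must identify $PE\times_{PB} B$ as a legitimate model of the relative path object $P_B E$ in $\mathbb{M}/B$, namely check that the relative diagonal $E\to E\times_B E$ factors through this object as an acyclic cofibration followed by a fibration. A secondary subtlety is confirming that for this particular fibration $\pi$, admitting a section is equivalent to being a weak equivalence in the nontrivial direction; this relies on $P_B E$ being a path object for $E$ over $B$, so that the section determines the coherent fiberwise homotopies required to promote it to a trivialization.
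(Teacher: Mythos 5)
Your overall route --- trading the paper's argument for a strict pullback square relating $\mathtt{tr}_p$ to a fibrewise path object --- is genuinely different from the paper's proof (which establishes the ``only if'' direction by citing Proposition~1.4.6 of \cite{thesis}, and the ``if'' direction by producing a homotopy between the two projections $E\times_B E\to E$ lying over a constant path in $B$ and rectifying it to a fibrewise homotopy via \cite[Lemma~3.9]{shulmaninv}). In principle your scheme can be made to work, but the two steps you yourself flag as ``obstacles'' are precisely where the mathematical content of the lemma lives, and as written they are gaps rather than routine verifications. For the first one, note that for an arbitrary path object $PE$ the comparison map $\mathtt{tr}_p$ is merely some lift and need not be a fibration, so your claim that all four maps in the square are fibrations already presupposes a good choice of model: one should define $PE$ by factoring $E\to PB\times_{B\times B}(E\times E)$ into an acyclic cofibration followed by a fibration (and check the statement is independent of this choice). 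With that choice the obstacle can indeed be closed: the top map $PE\times_{PB}B\to PE$ is the pullback of the weak equivalence $\iota$ along the fibration $\mathtt{tr}_p$, hence a weak equivalence because all objects involved are fibrant, and two-out-of-three shows $E\to PE\times_{PB}B$ is a weak equivalence; one must then still transfer sections between this model and the fibration $P_BE\twoheadrightarrow E\times_B E$ appearing in the definition of $(-1)$-truncatedness (a lifting argument over $E\times_B E$). None of these arguments appear in your text, and they are exactly the substitute for the paper's use of Shulman's rectification lemma.

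The second flagged point is the more serious gap. That a section of $\pi\colon P_BE\twoheadrightarrow E\times_B E$ forces $\pi$ to be a weak equivalence is the entire content of the ``only if'' direction, and the sentence ``the section determines the coherent fiberwise homotopies required to promote it to a trivialization'' is not a proof: a fibration admitting a section is in general far from acyclic, and what is needed is the nontrivial type-theoretic fact that if all (fibrewise) identity types are inhabited then they are contractible --- this is exactly what the paper outsources to \cite[Proposition~1.4.6]{thesis}, and it requires a genuine argument (e.g.\ the standard based-path-space argument showing that a mere proposition has contractible identity types), not just the observation that $P_BE$ is a path object. Until these two points are supplied, neither implication of the lemma is established by your proposal.
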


\begin{proof}
Since a factorization $f=pj$ into an acyclic cofibration $j$ and a fibration $p$ gives a factorization of
$\mathtt{tr}_f\colon PE\rightarrow PB$ into a homotopy-equivalence and $\mathtt{tr}_p$, without loss of generality the 
map $f$ is a fibration. The ``only if'' direction is shown in \cite[Proposition 1.4.6]{thesis}. For the other direction, 
assume $\mathtt{tr}_f$ is an equivalence. The two projections of the composition
\begin{align}\label{equlemmamono}
E\times_B E\rightarrow E\times E\rightarrow B\times B
\end{align}
yield the same map into $B$. That means the composition (\ref{equlemmamono}) factors through the diagonal of $B$ and 
hence yields a lift to $PB$ represented via the constant path $r_B\colon B\rightarrow PB$. In particular, 
the natural map $\iota_B\colon E\times_B E\rightarrow E\times E$ lifts to the pullback
$PB\times_{(B\times B)}(E\times E)$. Since $\mathtt{tr}_f$ is an equivalence, we obtain a homotopy
$H\colon\pi_1\iota_B\sim\pi_2\iota_B$ in $E$ such that $\mathtt{tr}_fH=r_Bf\pi_1$. Hence, by
\cite[Lemma 3.9]{shulmaninv}, we obtain a homotopy $H\sprime\colon\pi_1\iota_B\sim\pi_2\iota_B$ in $(\mathbb{C}/B)^f$, 
or in other words a section to $P_BE\twoheadrightarrow E\times_BE$. Thus, the fibration $f\colon E\twoheadrightarrow B$ 
is $(-1)$-truncated.
\end{proof}

\begin{corollary}\label{corhtyimage1}
Let $\mathbb{C}$ be a type theoretic fibration category. 
\begin{enumerate}
\item The class of $(-1)$-truncated maps in $\mathbb{C}$ is closed under composition, right cancellation and pullback 
along fibrations. The class of $(-1)$-truncated fibrations is pullback-stable. 
\item For any map $f\colon A\rightarrow B$ in $\mathbb{C}$, the lift $l$ in (\ref{diagdefhtyimage}) is unique up to 
homotopy whenever it exists.
\item If $f=pj$ is a $(-1)$-truncation of $f$, and we are given another factorization $f=p\sprime j\sprime$ for a fibration 
$p\sprime$ together with a lift
\[\xymatrix{
  & E\ar@{->>}[dr]^p & \\
A\ar[dr]_{j\sprime}\ar[ur]^j &  & B \\
  & E\sprime\ar@{->>}[ur]_{p\sprime}\ar[uu]^l & 
}
\]
such that $p\circ l\sim p\sprime$, then $l$ is a homotopy-equivalence if and only if $f=p\sprime j\sprime$ is a $(-1)$-truncation as 
well. In other words, $(-1)$-truncations are unique up to, and invariant under, homotopy-equivalence whenever they exist.
\end{enumerate}
\end{corollary}
\begin{proof}
Part 1 is \cite[Corollary 3.7.14]{joyaltribes}.
For Part 2, suppose we are given two lifts $l_1$ and $l_2$ as in (\ref{diagdefhtyimage}). By assumption, we obtain 
homotopies $H_1\colon E\rightarrow PB$ and $H_2\colon E\rightarrow PB$ between $q\circ l_1$ and $p$, and $q\circ l_2$ 
and $p$, respectively. Concatenation of $H_1$ with the inversion of $H_2$ (see \cite[3]{shulmaninv} for the explicit 
constructions) yields a homotopy $H_1\ast H_2^{-1}$ between $q\circ l_1$ and $q\circ l_2$. Then Lemma~\ref{lemmamono} 
applied to the map
\[\xymatrix{	
E\ar@/_1pc/[ddr]_{(l_1,l_2)}\ar@/^1pc/[drr]^{H_1\ast H_2^{-1}}\ar[dr] & & & \\
 & PB\times_{B\times B}(C\times C)\ar[r]\ar@{->>}[d]\ar@{}[dr]|(.3){\pbs} & PB\ar@{->>}[d] \\
 & C\times C\ar[r]_{q\times q} & B\times B
}\]
yields a homotopy between $l_1$ and $l_2$ since $q$ is $(-1)$-truncated.

For Part 3, suppose that both $A\xrightarrow{j}E\xrightarrow{p} B$ and $A\xrightarrow{j\sprime}E\sprime\xrightarrow{p\sprime} B$ are 
$(-1)$-truncations of the same map $f\colon A\rightarrow B$. Then we obtain lifts $l\colon E\rightarrow E\sprime$ and 
$l\sprime\colon E\sprime\rightarrow E$ as in Definition~\ref{defhtyimage}. By assumption, the triangles in (\ref{diagdefhtyimage}) 
commute up to homotopy, and so we may apply Part 2 to the pairs $(l\sprime\circ l,1_E)$ and $(l\circ l\sprime, 1_{E\sprime})$ together 
with the respectively concatenated homotopies into $B$ to obtain homotopies $l\sprime\circ l\sim 1_E$ and
$l\circ l\sprime\sim  1_{E\sprime}$.

For the other direction, it suffices to show that the fibration $p\sprime$ is $(-1)$-truncated, existence of the lifts in 
(\ref{diagdefhtyimage}) then follows by precomposition with $l$. But $(-1)$-truncatedness is invariant under composition with 
homotopy-equivalences, see e.g.\ \cite[Proposition 3.7.13]{joyaltribes}.
\end{proof}

Dually, a map $f$ in $\mathbb{C}$ is $(-1)$-connected if for every $(-1)$-truncation $f=j p$ of $f$, the fibration $p$ is 
an acyclic fibration.

\begin{corollary}\label{corhtyimage2}
Let $\mathbb{C}$ be a type theoretic fibration category.
\begin{enumerate}
\item For any map $f\colon A\rightarrow B$ in $\mathbb{C}$, the left map $j$ in a $(-1)$-truncation $f=pj$ of $f$ is $(-1)$-connected 
(whenever it exists).
\item A map in $\mathbb{C}$ is both $(-1)$-connected and $(-1)$-truncated if and only if it is a homotopy-equivalence.
\end{enumerate}
\end{corollary}
\begin{proof}
For Part 1, let $f=pj$ be a $(-1)$-truncation in $\mathbb{C}$, and $j=qi$ be a $(-1)$-truncation of $j\colon A\rightarrow E$ in
$\mathbb{C}$. We obtain the following diagram.
\[\xymatrix{
 & E\sprime\ar[dd]^{q}\ar@{->>}[dr]^{q\circ p} & \\
A\ar[ur]^{i}\ar[dr]_{j} & & B\\
 & E\ar@{->>}[ur]_{p} & 
}\]
Both top and bottom composition are a $(-1)$-truncation of $f\colon A\rightarrow B$ by
Corollary~\ref{corhtyimage1}.1, and so by Corollary~\ref{corhtyimage1}.3 there is a homotopy-equivalence
$e\colon E\sprime \rightarrow E$ making the resulting right hand side triangle commute up to homotopy. By
Corollary~\ref{corhtyimage1}.2, the lifts $q$ and $e$ are homotopic, and so $q$ is a homotopy-equivalence as well. Thus, 
$j$ is $(-1)$-connected.

For Part 2, if $f\colon A\rightarrow B$ is an acyclic fibration then $P_BA\twoheadrightarrow A\times_B A$ is an acyclic fibration as 
well. In particular, if $f$ is a homotopy-equivalence, it 
is $(-1)$-truncated as well. Then the factorization of $f$ into an acyclic cofibration $j$ followed by an acyclic
fibration $p$ is a $(-1)$-truncation of $f$. Indeed, the lifts (\ref{diagdefhtyimage}) are obtained via any 
chosen retract of $j$ (constructed in \cite[Lemma 3.6]{shulmaninv}). Thus, $f$ is $(-1)$-connected (as any two $(-1)$-truncations are 
homotopy-equivalent to one another by Corollary~\ref{corhtyimage1}.2).

Vice versa, let $f=pj$ be a factorization of $f$ into an acyclic cofibration $j$ followed by a fibration $p$. If
$f\colon A\rightarrow B$ is $(-1)$-truncated, then so is $p$, and thus $f=pj$ is a $(-1)$-truncation of $f$ 
again via \cite[Lemma 3.6]{shulmaninv}. If $f$ is furthermore $(-1)$-connected, then $p$ is an acyclic fibration, and so 
$f=pj$ is a homotopy-equivalence.
\end{proof}

The following definition will be the basis for the rest of this section.

\begin{definition}\label{defpbstablehtyimagefac}
A type theoretic fibration category $\mathbb{C}$ has \emph{$(-1)$-truncations}, if every map $f\colon A\rightarrow B$ in
$\mathbb{C}$ has a $(-1)$-truncation $f=pj$, and furthermore, for any fibration $q\colon F\twoheadrightarrow B$, the induced 
factorization $q^{\ast}f=q^{\ast}p\circ q^{\ast}j$ is a $(-1)$-truncation of $q^{\ast}f$.
\end{definition}

A type theoretic fibration category $\mathbb{C}$ has $(-1)$-truncations whenever the internal type theory of $\mathbb{C}$ comes 
equipped with propositional truncation as in \cite[Section 3.7]{hott}. That is the case for instance when $\mathbb{C}=\mathbb{M}^f$ 
for a combinatorial model category $\mathbb{M}$ whose cofibrations are the monomorphisms. Indeed, as presented for example in 
\cite[Section 7]{rezkhtytps}, in this case the $(-1)$-connected cofibrations and the $(-1)$-truncated fibrations form a weak 
factorization system on $\mathbb{M}$.
 
\begin{lemma}\label{lemmacnnctprops}
Suppose $\mathbb{C}$ has $(-1)$-truncations. 
\begin{enumerate}
\item The class of $(-1)$-connected maps in $\mathbb{C}$ is closed under pullbacks along fibrations. 
\item If $f\colon A\rightarrow B$ is $(-1)$-connected, then any factorization $f=qi$ for $q$ a $(-1)$-truncated 
fibration is a $(-1)$-truncation. In particular, $q$ is an acyclic fibration.
\item Every factorization of a map $f$ into a $(-1)$-connected map followed by a $(-1)$-truncated fibration is a $(-1)$-truncation of 
$f$.
\item The class of $(-1)$-connected maps is closed under composition.
\item The class of $(-1)$-connected maps is closed under  homotopy-equivalence. That means given a square of the form
\begin{align}\label{diaglemmacnnctprops}
\begin{gathered}
\xymatrix{
X\ar[d]_g\ar[r]_{\simeq}^e & A\ar[d]^f\\
Y\ar[r]_{\simeq}^d & B,
}
\end{gathered}
\end{align}
then $g$ is $(-1)$-connected if and only if $f$ is so.
\end{enumerate}
\end{lemma}
\begin{proof}
For Part 1, let $f\colon A\rightarrow B$ be $(-1)$-connected and $p\colon E\twoheadrightarrow B$ be a fibration. Let 
$f=qj$ be a homotopy-image factorization of $f$, so $q$ is an acyclic fibration. By assumption, the pullback
$p^{\ast}q\circ p^{\ast}j$ is a $(-1)$-truncation of $p^{\ast}f$. Since acyclic fibrations are closed under 
pullback, and $(-1)$-truncations are unique up to homotopy-equivalence by Corollary~\ref{corhtyimage1}.3, it 
follows that the map $p^{\ast}f$ is $(-1)$-connected again.

For Part 2, let $f\colon A\rightarrow B$ be $(-1)$-connected and $f=pj$ its $(-1)$-truncation through some 
object $E$, and $f=qi$ be another factorization through a $(-1)$-truncated fibration $q\colon C\twoheadrightarrow B$. By 
assumption, we obtain a map $l\colon E\rightarrow C$ together with a homotopy $q\circ l\sim p$ in $B$. Since $p$ is a 
acyclic fibration, we obtain a map $k\colon C\rightarrow E$ over $B$ as well. Both $p$ and $q$ are $(-1)$-truncated, and 
so $k$ and $l$ are mutual homotopy-inverses. Thus, $f=qi$ is a $(-1)$-truncation as well by Corollary~\ref{corhtyimage1}.4.

For Part 3, let $f=pj$ such that $p$ is a $(-1)$-truncated fibration and $j$ is $(-1)$-connected. Let $f=qi$ such that
$q\colon C\rightarrow B$ is a $(-1)$-truncated morphism as well. Consider the pullback
\[\xymatrix{
 & & E\ar@{->>}[dr]^p & \\
A\ar@/^1pc/[urr]^{j}\ar@/_1pc/[drr]_{i}\ar[r] & P\ar[ur]\ar@{->>}[dr]\ar@{}[rr]|(.3){\rotatebox[origin=c]{45}{$\pbs$}} & & B \\
 & & C\ar[ur]_{q} & 
}\]
If we factor the map $q$ into an acyclic cofibration $k$ followed by a $(-1)$-truncated fibration $q\sprime$, we see 
that the map $P\rightarrow E$ is the composition of the acyclic cofibration $p^{\ast}k$ and the fibration
$p^{\ast}q\sprime$. The latter is $(-1)$-truncated by Corollary~\ref{corhtyimage1}.1. Since $j$ is
$(-1)$-connected, it follows from Part 2 that $p^{\ast}q\sprime$ is an acyclic fibration. Postcomposition of a
homotopy-inverse of the resulting homotopy-equivalence $p^{\ast}q$ with $P\twoheadrightarrow C$ yields a lift as 
required in Definition~\ref{defhtyimage}.2.

For Part 4, let $f\colon A\rightarrow B$ and $g\colon B\rightarrow C$ be $(-1)$-connected morphisms and let $gf=pj$ be a 
$(-1)$-truncation of their composition. Consider the following pullback.
\[\xymatrix{
  &   &   E\ar@/^1pc/@{->>}[ddr]^p & \\
  & P\ar[ur]\ar@{->>}[dr]\ar@{}[rr]|(.2){\rotatebox[origin=c]{45}{$\pbs$}} &   &  \\
A\ar[rr]_f\ar@/^2pc/[uurr]^{j}\ar[ur]& & B\ar[r]_g & C 
}\]
As a pullback of $p$, the fibration $P\twoheadrightarrow B$ is $(-1)$-truncated. Since $f$ is $(-1)$-connected, it 
follows that $P\twoheadrightarrow B$ is an acyclic fibration by Part 2. Choosing a section
$s\colon B\rightarrow P$, we obtain a factorization of $g$ through $p$. Since $g$ is $(-1)$-connected as well, we see 
that $p$ is an acyclic fibration by the same reasoning. Thus, the composite $gf$ is $(-1)$-connected (using that
$(-1)$-truncations of $gf$ are unique up to homotopy-equivalence by Corollary~\ref{corhtyimage1}.4).

For Part 5, suppose we are given maps $f\colon A\rightarrow B$ and $g\colon X\rightarrow Y$ together with homotopy-equivalences as in 
(\ref{diaglemmacnnctprops}). If $g$ is $(-1)$-connected, let $f=pj$ be a $(-1)$-truncation
of $f$. Then $j$ is $(-1)$-connected by Corollary~\ref{corhtyimage2}.1, and hence so is the composition $je$ by 
Corollary~\ref{corhtyimage2}.2 and Part 4. In the same way, the composition $dg$ is $(-1)$-connected. Thus, via Part 3, 
$dg=p(je)$ is a $(-1)$-truncation of a $(-1)$-connected map. It follows that $p$ is an acyclic fibration, and 
hence $f$ is $(-1)$-connected (again via Corollary~\ref{corhtyimage1}.3).

Vice versa, suppose $f$ is $(-1)$-connected and factor the bottom homotopy-equivalence $d\colon Y\rightarrow B$ into an 
acyclic cofibration $j\colon Y\rightarrow C$ followed by an acyclic fibration $p\colon C\rightarrow B$.
\[\xymatrix{
X\ar[d]_g\ar[r]\ar@/^1pc/[rr]^e & P\ar@{->>}[r]^{\sim}\ar[d]\ar@{}[dr]|(.3){\pbs} & A\ar[d]^f \\
Y\ar@{^(->}[r]_j^{\sim} & C\ar@{->>}[r]^{\sim}_p & B  
}\]
Then the map $X\rightarrow P$ is a homotopy-equivalence  by 2-out-of-3, and the map $P\rightarrow C$ is $(-1)$-connected 
by Part 1. It follows that the composition $X\rightarrow C$ is $(-1)$-connected. The acyclic cofibration
$j\colon Y\rightarrow C$ admits a retract $r\colon C\rightarrow Y$ by \cite[Lemma 3.6]{shulmaninv} which is a homotopy-equivalence 
itself. Thus, $g$ is the composition of the $(-1)$-connected map $X\rightarrow C$ with the homotopy-equivalence
$r\colon C\rightarrow Y$, and hence $(-1)$-connected.
\end{proof}

Lastly, $(-1)$-truncations are homotopy-invariant in the following way.

\begin{lemma}\label{lemmahmtyimageinvariance}
Suppose $\mathbb{C}$ has $(-1)$-truncations. Let $f_1=pj$ be a $(-1)$-truncation of $f_1\colon A\rightarrow B$ in $\mathbb{C}$, and 
suppose $f_1$ is homotopic to $f_2\colon A\rightarrow B$. Then for every factorization $f_2=qi$ such that
\begin{enumerate}
\item the map $q\colon C\rightarrow B$ is $(-1)$-truncated, there is a vertical map
\[\xymatrix{
  & E\ar@{->>}[dr]^p\ar[dd]^l  & \\
A\ar[dr]_{i}\ar[ur]^j &  & B \\
  & C\ar[ur]_{q}& 
}
\]
such that both triangles commute up to homotopy;
\item the map $i\colon A\rightarrow C$ is $(-1)$-connected, there is a vertical map
\begin{align}\label{diaglemmahmtyimageinvariancepart2}
\begin{gathered}
\xymatrix{
  & E\ar@{->>}[dr]^p  & \\
A\ar[dr]_{i}\ar[ur]^j &  & B \\
  & C\ar[ur]_{q}\ar[uu]^l& 
}
\end{gathered}
\end{align}
such that both triangles commute up to homotopy;
\item the map $i\colon A\rightarrow C$ is $(-1)$-connected and $q\colon C\rightarrow B$ is $(-1)$-truncated, the lifts 
$l$ from Parts 1 and 2 are mutual homotopy-inverses.
\end{enumerate}
\end{lemma}
\begin{proof}
Let $f_1=pj$ be a $(-1)$-truncation of $f_1$ and let $H\colon A\rightarrow PB$ be a homotopy from $f_1$ to 
$f_2$. Choose a $(-1)$-truncation $H=rk$. We obtain a diagram of solid arrows
\begin{align}\label{diaglemmahmtyimageinvariance}
\begin{gathered}
\xymatrix{
 & E\ar@{->>}@/^1pc/[drr]^p\ar[d]^{l_1}_{\rotatebox[origin=c]{90}{$\simeq$}} & &  \\
A\ar@/^1pc/[ur]^j\ar@/_1pc/[dr]_i\ar[r]_k & D\ar@{--}[d]^{l_2}\ar@{->>}[r]_r & PB\ar@<.5ex>@{->>}[r]^{s}\ar@<-.5ex>@{->>}[r]_{t} & B  \\
  & C,\ar@/_1pc/[urr]_q & &  
}
\end{gathered}
\end{align}
together with homotopies $p\sim srl_1$ and $s\sim t$. The upper homotopy-equivalence $l_1$ exists because 
the composition $D\overset{r}{\twoheadrightarrow}PB\overset{s}{\twoheadrightarrow}B$ is a $(-1)$-truncated fibration,
$k$ is $(-1)$-connected and the upper triangle commutes. 

In Part 1, we obtain a vertical dashed arrow $l_2\colon D\rightarrow C$ together with a homotopy $tr\sim ql_2$, because the lower 
triangle commutes likewise. We obtain a composite $l\colon E\rightarrow C$ together with a homotopy from $p$ to
$q\circ l$ by concatenation of the three induced homotopies from $p$ to $srl_1$, from $srl_1$ to $trl_1$, and from 
$trl_1$ to $ql_2l_1$. Homotopy-commutativity of the left hand side triangle again follows automatically.

In Part 2, the lower triangle in (\ref{diaglemmahmtyimageinvariance}) yields a strictly commutative square of the form
\begin{align}\label{diaglemmahmtyimageinvariance2}
\begin{gathered}
\xymatrix{
A\ar[r]^k\ar[d]_i & D\ar@{->>}[d]^{tr} \\
C\ar[r]_q & 	B. \\
}
\end{gathered}
\end{align}

The natural map $A\rightarrow C\times_B D$ is a homotopy-equivalence by pullback stability of $(-1)$-truncated fibrations 
(Lemma~\ref{corhtyimage1}.1) and Lemma~\ref{lemmacnnctprops}.2.	 We hence obtain a lift $l_2\colon C\rightarrow D$ in 
(\ref{diaglemmahmtyimageinvariance}) such that the resulting triangles commute up to homotopy. We eventually obtain a composite lift 
$l:=l_1\circ l_2$ in (\ref{diaglemmahmtyimageinvariancepart2}) as required.

In Part 3, if $q$ is furthermore $(-1)$-truncated, any factorization of $q$ into an acyclic cofibration $m\colon C\rightarrow F$ 
followed by a $(-1)$-truncated fibration $t\colon F\twoheadrightarrow B$ yields another $(-1)$-truncation
$f_2=t(mi)$. We obtain a homotopy-equivalence $D\rightarrow F$ which may be composed with any retract of
$m\colon C\rightarrow F$ to yield a composite homotopy-equivalence $D\rightarrow C$ (which is homotopic to $l_2$ by an 
application of Corollary~\ref{corhtyimage1}.2). The fact that the lifts from Part 1 and Part 2 are mutual homotopy-inverses now 
follows again from the fact that $p$ and $q$ are $(-1)$-truncated.
\end{proof}

\begin{remark}
Whenever $\mathbb{C}$ has $(-1)$-truncations, the class of $(-1)$-connected maps and the 
class of $(-1)$-truncated maps form a suitably defined notion of homotopy factorization system on $\mathbb{C}$.
\end{remark}

\subsection{Univalent completion of fibrations}\label{secsubunivcompl}
Given a type theoretic fibration category $\mathbb{C}$, let $\mathcal{F}\subseteq\mathbb{C}^{[1]}$ be the full 
subcategory of fibrations in $\mathbb{C}$, and let $\mathcal{F}^{\times}\subset\mathbb{C}^{[1]}$ the subcategory whose 
objects are fibrations and whose arrows are homotopy-cartesian squares between fibrations in $\mathbb{C}$. That is, 
squares of the form
\begin{equation}\label{genfibsquare}
\begin{gathered}
\xymatrix{
X\ar[r]\ar@{->>}[d]_q& E\ar@{->>}[d]^{p} \\
Y\ar[r]_{b} & B 
}
\end{gathered} 
\end{equation}
such that the natural gap map $X\rightarrow E\times_B Y$ is a homotopy-equivalence.
\begin{definition}
A homotopy-cartesian square (\ref{genfibsquare}) of fibrations in $\mathbb{C}$ is a \emph{BM-equivalence} if the base 
map $b\colon Y\rightarrow B$ is $(-1)$-connected.
\end{definition}

For the following, we fix a fibration $\pi\colon \tilde{U}\twoheadrightarrow U$ in $\mathbb{C}$. We say that a 
fibration $p\colon E\twoheadrightarrow B$ is \emph{small} if it arises as the homotopy-pullback of $\pi$ along some map
$B\rightarrow U$. In the spirit of \cite{univalentcompletion}, we make the following definition.

\begin{definition}\label{defunicompletion}
Let $p\colon E\twoheadrightarrow B$ be a small fibration in $\mathbb{C}$. We say that a BM-equivalence
\[\xymatrix{
E\ar[r]\ar@{->>}[d]_p& u(E)\ar@{->>}[d]^{u(p)} \\
B\ar[r]_{\iota} & u(B) 
}\]
is a \emph{univalent completion} of $p$ if the fibration $u(p)\in\mathbb{C}$ is small and univalent.
\end{definition}

The authors of \cite{univalentcompletion} show that in the case of simplicial sets, a 
univalent completion exists for all $\kappa$-small Kan fibrations between Kan complexes for sufficiently large cardinals 
$\kappa$. In our terms, this is a univalent completion with respect to the univalent fibration
$\pi_{\kappa}\colon\tilde{U}_{\kappa}\twoheadrightarrow U_{\kappa}$ that is universal for all $\kappa$-small Kan 
fibrations, shown to exist in \cite{klvsimp}. The fact that the map $\iota\colon B\rightarrow u(B)$ constructed in 
\cite[Theorem 5.1]{univalentcompletion} is $(-1)$-connected is not explicitly noted in \cite[Theorem 5.1]
{univalentcompletion}, but follows immediately since
$\iota_0\colon B_0\rightarrow u(B)_0$ is the identity on the sets of vertices. Given the following syntactic 
generalization of Gepner and Kock's observation in \cite[Corollary 3.10]{gepnerkock})\footnote{Also generally shown for 
locally cartesian closed $(\infty,1)$-categories in \cite[Theorem 6.29]{rasekh} without the assumption of 
presentability.}, it is easy to show that univalent completion always exists whenever the ambient fibration
$\pi\colon\tilde{U}\twoheadrightarrow U$ is univalent itself.

\begin{proposition}\label{propmereproppb}
Let $p\colon E\twoheadrightarrow B$ and $q\colon X\twoheadrightarrow Y$ be fibrations in a type theoretic fibration 
category $\mathbb{C}$. Suppose $p$ is univalent and  
\begin{align}\label{diagmereproppb}
\begin{gathered}
\xymatrix{
X\ar[r]\ar@{->>}[d]_q & E\ar@{->>}[d]^p \\
Y\ar[r]_f & B
}
\end{gathered}
\end{align}
is a homotopy-cartesian square. Then the fibration $q$ is univalent if and only if the map $f$ is $(-1)$-truncated.
\end{proposition}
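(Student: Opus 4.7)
The plan is to reformulate univalence in terms of a path-transport equivalence, use that this transport map is stable under pullback, and then invoke Lemma \ref{lemmamono}.

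First I would argue that a fibration $p\colon E\twoheadrightarrow B$ is univalent if and only if the canonical path-transport map $\mathtt{tr}_p\colon PB\to\mathrm{Equiv}(p)$ over $B\times B$ is a homotopy equivalence. Indeed, the reflexivity $r\colon B\overset{\sim}{\hookrightarrow} PB$ is an acyclic cofibration, and its composite with $\mathtt{tr}_p$ is (up to the identification of Lemma~\ref{lemmasimpunivalence}) exactly the degeneracy $s_0\colon B\to\mathrm{Equiv}(N(p))$ whose acyclicity defines univalence (Definition~\ref{defunivalenceso}). By 2-for-3, $s_0$ is an equivalence if and only if $\mathtt{tr}_p$ is.

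Next I would check that $\mathrm{Equiv}(\blank)$ is stable under homotopy pullback along a map of bases. Without loss of generality, by factoring $f$ as an acyclic cofibration followed by a fibration and using right properness, we may assume \eqref{diagmereproppb} is a strict pullback with $f$ a fibration. The object $\mathrm{Fun}(q)$ is then the strict pullback of $\mathrm{Fun}(p)$ along $f\times f\colon Y\times Y\to B\times B$, because internal homs and dependent products in a type theoretic fibration category are stable under reindexing. The same stability then propagates through the construction of $\mathrm{Equiv}$ given in \eqref{defequtt}. Consequently, pulling the equivalence $\mathtt{tr}_p\colon PB\xrightarrow{\sim}\mathrm{Equiv}(p)$ back along $f\times f$ yields a homotopy equivalence
\[
PB\times_{B\times B}(Y\times Y)\xrightarrow{\sim}\mathrm{Equiv}(q)
\]
over $Y\times Y$, and under this equivalence the transport map $\mathtt{tr}_q\colon PY\to\mathrm{Equiv}(q)$ is identified with the composite of $\mathtt{tr}_f\colon PY\to PB\times_{B\times B}(Y\times Y)$ followed by the above equivalence.

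Finally I would combine these ingredients. By the first step applied to $q$, the fibration $q$ is univalent if and only if $\mathtt{tr}_q$ is a homotopy equivalence. By the second step this holds if and only if $\mathtt{tr}_f$ is a homotopy equivalence, which by Lemma~\ref{lemmamono} is equivalent to $f$ being $(-1)$-truncated. The main obstacle is the second step: making precise that the pullback of $\mathrm{Equiv}(p)$ along the base map $f\times f$ really is $\mathrm{Equiv}(q)$ and that this identification intertwines $\mathtt{tr}_p$ with $\mathtt{tr}_q$. This is handled by first reducing to a strict pullback via fibrant replacement of $f$ and then tracing through the pullback pasting for the diagrams \eqref{diagconvpair} and \eqref{diageqdj} used to build $\mathrm{Inv}$ and $\mathrm{Equiv}$, invoking the Frobenius property to keep the relevant acyclic cofibrations acyclic after pullback along fibrations.
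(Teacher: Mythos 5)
Your proposal is correct and follows essentially the same route as the paper's proof: reduce to a strict pullback with $f$ a fibration, identify $\mathrm{Equiv}(q)$ with the base change of $\mathrm{Equiv}(p)$ along $f\times f$, use univalence in its transport form $PB\xrightarrow{\sim}\mathrm{Equiv}(p)$, and conclude by 2-out-of-3 together with Lemma~\ref{lemmamono}. The only difference is presentational: you spell out the base-change stability of $\mathrm{Fun}$ and $\mathrm{Equiv}$, which the paper records implicitly via the pullback squares in its diagram.
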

\begin{proof}
Factoring $f$ into an acyclic cofibration $j\colon Y\overset{\sim}{\hookrightarrow}Z$ followed by a fibration
$m\colon Z\twoheadrightarrow E$, it suffices to show that $m$ is $(-1)$-truncated if and only if the fibration
$m^{\ast}p$ is univalent, since univalence is invariant under homotopy-equivalence of fibrations by
\cite[Corollary 1.6.4]{thesis}. Thus, without loss of generality we may assume the square~(\ref{diagmereproppb}) is 
strictly cartesian and $f$ is a fibration. We obtain the following diagram.
\[\xymatrix{
Y\ar[dr]\ar@/_1pc/[dddr]_{\Delta}\ar[rrr]^f & & & B\ar@{^(->}[dr]^{\sim}_{r_B}\ar@/_1pc/[dddr]_{\Delta}|(.3)\hole|(.63)\hole & & & \\
 & (f\times f)^{\ast}PB\ar[dr]\ar@{->>}[dd]\ar[rrr]\ar@{}[ddrr]_(.2){\pbs} & & & PB\ar@{->>}[dd]|(.5)\hole\ar[dr]^{} & \\
 & & \mathrm{Equiv}(q)\ar@/^/@{->>}[dl]\ar[rrr]\ar@{}[drr]_(.2){\pbs} & & & \mathrm{Equiv}(p)\ar@/^/@{->>}[dl] \\
 & Y\times Y\ar[rrr]_{f\times f} & & & B\times B & 
}\]
Here, $\mathrm{Equiv}(p)\twoheadrightarrow B\times B$ is the object (\ref{diagequivdeg}) defined generally for Segal 
objects in $\mathbb{C}$. Assume $f$ is $(-1)$-truncated. Then the natural map $PY\rightarrow(f\times f)^{\ast}PB$ is a 
homotopy-equivalence by Lemma~\ref{lemmamono} and hence the composition $Y\rightarrow (f\times f)^{\ast}PB$ is a 
homotopy-equivalence. The fibration $p$ is univalent, i.e.\ the map
$PB\rightarrow\mathrm{Equiv}(p)$ is a homotopy-equivalence and hence so is its pullback
$(f\times f)^{\ast}PB\rightarrow\mathrm{Equiv}(q)$. Thus, the composition $Y\rightarrow\mathrm{Equiv}(q)$ is a homotopy
equivalence. Hence, $q$ is univalent.

Vice versa, assume the fibration $q$ is univalent. Then $PY\rightarrow\mathrm{Equiv}(q)$ is a 
homotopy-equivalence by univalence of $q$ and also $(f\times f)^{\ast}PB\rightarrow\mathrm{Equiv}(q)$ is a homotopy-equivalence by 
univalence of $p$. So we see that $PY\simeq(f\times f)^{\ast}PB$ over $Y\times Y$ and so the fibration $f\colon Y\twoheadrightarrow B$ 
is $(-1)$-truncated by Lemma~\ref{lemmamono}.
\end{proof}

\begin{proposition}\label{propexunivcomp}
Let $\mathbb{C}$ be a type theoretic fibration category with $(-1)$-truncations, and suppose $\pi\colon\tilde{U}\twoheadrightarrow U$ 
in $\mathbb{C}$ is univalent. Then every small fibration $p\colon E\twoheadrightarrow B$ in $\mathbb{C}$ has a univalent completion
\begin{align}\label{diagunivcompletion}
\begin{gathered}
\xymatrix{
E\ar[r]\ar@{->>}[d]_p& u(E)\ar@{->>}[d]^{u(p)} \\
B\ar[r]_{\iota} & u(B).
}
\end{gathered}
\end{align}
Furthermore, univalent completion is unique up to homotopy. That means, whenever
\begin{align}\label{diagunivcompletion2}
\xymatrix{
E\ar[r]\ar@{->>}[d]_p& u\sprime(E)\ar@{->>}[d]^{u\sprime(p)} \\
B\ar[r]_{\iota\sprime} & u\sprime(B)
}
\end{align}
is another univalent completion of $p$, there is a homotopy-commutative diagram of the form
\begin{align}\label{diagpropexunivcomp}
\begin{gathered}
\xymatrix{
  & u(E)\ar@{->>}[dd]|\hole^(.7){u\sprime(p)}\ar[dr]^{\simeq} & \\
E\ar@{->>}[dd]_p\ar[rr]\ar[ur] &  &  u\sprime(E)\ar@{->>}[dd]^(.3){u(p)} \\
 & u(B)\ar[dr]^{\simeq} &  \\
B\ar[rr]_{\iota\sprime}\ar[ur]_{\iota} & & u\sprime(B). 
}
\end{gathered}
\end{align}
\end{proposition}

\begin{proof}
Since $p\colon E\twoheadrightarrow B$ is small, there is a homotopy-cartesian square
\[\xymatrix{
E\ar[r]\ar@{->>}[d]_p& \tilde{U}\ar@{->>}[d]^{\pi} \\
B\ar[r]_{b} & U
}\]
in $\mathbb{M}$. By Corollary~\ref{corhtyimage2}.1, we can factor the map $b\colon B\rightarrow U$ into a
$(-1)$-connected map $\iota\colon B\rightarrow u(B)$ followed by a $(-1)$-truncated 
fibration $b_{-1}\colon u(Y)\twoheadrightarrow B$ and obtain two homotopy-cartesian squares
\begin{align}\label{diagunivcompletionexist}
\begin{gathered}
\xymatrix{
E\ar[r]\ar@{->>}[d]_p&(b_{-1})^{\ast}\tilde{U}\ar@{->>}[d]_{(b_{-1})^{\ast}\pi}\ar[r]\ar@{}[dr]|(.3){\pbs} & \tilde{U}\ar@{->>}[d]^{\pi} \\
B\ar[r]_(.4){\iota} & u(B)\ar@{->>}[r]_{b_{-1}} & U.
}
\end{gathered}
\end{align}
But $\pi$ is univalent in $\mathbb{C}$ and $b_{-1}\colon u(B)\twoheadrightarrow U$ is $(-1)$-truncated, and so the 
fibration $(b_{-1})^{\ast}p$ is univalent, too, by Proposition~\ref{propmereproppb}. We therefore can define
$u(p):=(b_{-1})^{\ast}\pi$.

Given another univalent completion (\ref{diagunivcompletion2}) of $p$, we obtain two sequences of squares between 
fibrations of the form
\begin{align}\label{diagunivcompletionunique}
\begin{gathered}
\xymatrix{
  & u(E)\ar[rr]\ar@{->>}[dd]|\hole_(.7){u(p)} &       & \tilde{U}\ar@{->>}[dd]^{\pi} \\
E\ar@{->>}[dd]_p\ar[rr]\ar[ur] &             &  u\sprime(E)\ar@{->>}[dd]^(.3){u\sprime(p)}\ar[ur] &            \\
  & u(B)\ar@{->>}[rr]|(.55)\hole_(.3){b_{-1}} &       &     U      \\
B\ar[rr]_{\iota\sprime}\ar[ur]_{\iota} &             &  u\sprime(B)\ar[ur]_{b\sprime_{-1}} & 
}
\end{gathered}
\end{align}
where all sides but potentially the top and bottom side commute and are homotopy-cartesian.
If we denote by $b\sprime$ the composition $b\sprime_{-1}\circ\iota\sprime\colon B\rightarrow U$, the lift
\[\xymatrix{
 & \mathrm{Equiv}(\pi)\ar@{->>}[d]^{(s,t)} \\
B\ar[r]_{(b,b\sprime)}\ar[ur]^{(b,b\sprime,\mathrm{id}_p)} & U\times U
}\]
is a homotopy between $b$ and $b\sprime$ by univalence of $\pi$. In other words, the bottom square of diagram 
(\ref{diagunivcompletionunique}) commutes up to homotopy. Since both $\iota$ and $\iota\sprime$ are $(-1)$-connected by 
assumption, and both $b_{-1}$ and $b\sprime_{-1}$ are $(-1)$-truncated by Proposition~\ref{propmereproppb}, we obtain a 
homotopy-equivalence $u(B)\rightarrow u\sprime (B)$ such that both resulting triangles commute up to homotopy by 
Lemma~\ref{lemmahmtyimageinvariance}.3. Since both $(-1)$-connectedness and $(-1)$-truncatedness are stable by pullback 
along fibrations (Corollary~\ref{corhtyimage1}.1, Lemma~\ref{lemmacnnctprops}.1), the same reasoning applies to the top 
square, and we obtain a homotopy-equivalence $u(E)\rightarrow u\sprime(E)$ making the resulting triangles commute up to 
homotopy. The resulting square
\[\xymatrix{
u(E)\ar[r]^{\simeq}\ar@{->>}[d]_{u(p)} & u\sprime(E)\ar@{->>}[d]^{u\sprime(p)} \\
u(B)\ar[r]_{\simeq} & u\sprime(B)}\]
commutes up to homotopy via Lemma~\ref{lemmamono}, because it does so after postcomposition with the $(-1)$-truncated map 
$b\sprime_{-1}$.
\end{proof}

Let $\mathcal{UF}\subseteq\mathcal{F}$ denote the full subcategory of univalent fibrations in $\mathbb{C}$, and
$\mathcal{W}$ denote both the class of homotopy-equivalences in $\mathbb{C}$ and the class of pointwise weak
equivalences in $\mathcal{F}$, respectively. Let BM be the class of all BM-equivalences in $\mathbb{C}$.
Recall that a map of relative categories $F\colon(\mathbb{C},W)\rightarrow(\mathbb{D},V)$ is a \emph{relative inclusion} 
if the underlying functor of categories $F\colon\mathbb{C}\rightarrow\mathbb{D}$ is a (not-necessarily full) inclusion and
$\mathbb{C}\cap V=W$.

\begin{corollary}\label{corufincbm}
The inclusion $\mathcal{UF}\subseteq\mathcal{F}$ induces a relative inclusion
\[(\mathcal{UF}^{\times},\mathcal{W})\hookrightarrow(\mathcal{F}^{\times},\mathrm{BM})\]
of relative categories.
\end{corollary}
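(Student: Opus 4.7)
The plan is to verify the two conditions defining a relative inclusion: (i) the underlying functor of categories $\mathcal{UF}^{\times}\hookrightarrow\mathcal{F}^{\times}$ is an inclusion, and (ii) $\mathcal{UF}^{\times}\cap\mathrm{BM}=\mathcal{W}$. Condition (i) is immediate from the fact that $\mathcal{UF}\subseteq\mathcal{F}$ is a full subcategory inclusion, so restricting to homotopy-cartesian squares on both sides still yields an inclusion. The content lies in condition (ii), and this reduces to two mutually inverse inclusions of classes of arrows in $\mathcal{UF}^{\times}$.

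For the inclusion $\mathcal{W}\subseteq\mathrm{BM}$, which holds on all of $\mathcal{F}^{\times}$ (not only $\mathcal{UF}^{\times}$), I would argue as follows: a morphism in $\mathcal{F}^{\times}$ is by definition a homotopy-cartesian square (\ref{genfibsquare}) between fibrations, and it is in $\mathcal{W}$ exactly when $b$ and the induced top map are weak equivalences, in particular when $b$ is a weak equivalence. Every weak equivalence is $(-1)$-connected because in the homotopy factorization system $((-1)\text{-conn}, (-1)\text{-trunc})$ the class of weak equivalences is contained in the left class (indeed it lies in the intersection of the two classes, which is the standard characterization of equivalences in a homotopy orthogonal factorization system). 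Hence $b$ is $(-1)$-connected, so the square is a BM-equivalence.

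For the reverse inclusion $\mathcal{UF}^{\times}\cap\mathrm{BM}\subseteq\mathcal{W}$, I would invoke Proposition~\ref{propmereproppb}: given a BM-equivalence
\[\xymatrix{
X\ar[r]\ar@{->>}[d]_q & E\ar@{->>}[d]^{p}\\
Y\ar[r]_{b} & B
}\]
in which both $p$ and $q$ are univalent, the proposition applied to $p$ univalent and the square homotopy-cartesian forces the base $b$ to be $(-1)$-truncated, since $q$ is also univalent. But $b$ is $(-1)$-connected by the BM-equivalence assumption. Being simultaneously in the left and right class of the $((-1)\text{-conn}, (-1)\text{-trunc})$ homotopy factorization system, $b$ must be a weak equivalence in $\mathbb{M}$. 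Since the square is homotopy-cartesian and $p$ is a fibration, right properness of $\mathbb{M}$ (which follows from it being a model category with these factorization properties) then yields that the top horizontal map $X\to E$ is also a weak equivalence. So the square is in $\mathcal{W}$.

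The only step that requires care is the fact that a map lying in both classes of the truncation/connectedness factorization system is a weak equivalence; I would either cite this as a standard property of homotopy orthogonal factorization systems, or give a brief direct argument by factoring such an $f$ as $r\cdot l$ with $l$ $(-1)$-connected and $r$ $(-1)$-truncated, and using the homotopy lifting between $f$ and $r$ (via the orthogonality witnessed by $f\in (-1)\text{-conn}$ against $r\in (-1)\text{-trunc}$) to produce a homotopy inverse. This is the main conceptual obstacle; everything else is a direct application of Proposition~\ref{propmereproppb}.
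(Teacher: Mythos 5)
Your proposal is correct and follows essentially the same route as the paper's own proof: the easy inclusion $\mathcal{W}\subseteq\mathrm{BM}$, then Proposition~\ref{propmereproppb} to get $(-1)$-truncatedness of the base of a BM-equivalence between univalent fibrations, combined with $(-1)$-connectedness to conclude the base is a weak equivalence, and homotopy-cartesianness to transfer this to the top map. The only difference is that you spell out the standard facts (weak equivalences lie in both classes of the homotopy factorization system, and conversely) that the paper treats as immediate.
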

\begin{proof}
Clearly every weak equivalence is a BM-equivalence, so we have to show that vice versa every BM-equivalence of the 
form (\ref{genfibsquare}) between univalent fibrations is a homotopy-equivalence in $\mathbb{C}$. But the base map
$b\colon Y\rightarrow B$ is $(-1)$-connected by assumption and it is $(-1)$-truncated by
Proposition~\ref{propmereproppb}. Thus $b$ is a homotopy-equivalence by Corollary~\ref{corhtyimage2}.2, and so is the 
upper map $X\rightarrow E$ since the square is homotopy-cartesian.
\end{proof}

\subsection{Rezk-completion of Segal objects}\label{secsubrezkcompl}

In virtue of Theorem~\ref{intc=simpc}, we will continue to consider Segal objects $X$ in type theoretic 
fibration categories $\mathbb{C}$, and define a ``Rezk-completion'' of $X$ to be a \emph{DK-equivalence} to a 
univalent Segal object in $\mathbb{C}$. Thus, Rezk-completion as considered here is in essence what is considered 
syntactically in \cite[8]{aksrezkcompl} for precategories. In this subsection we fix a type theoretic fibration category $\mathbb{C}$ 
with $(-1)$-truncations.

Let $\mathrm{S}(\mathbb{C})\subseteq s\mathbb{C}$ denote the full subcategory of Segal objects in $\mathbb{C}$. 
To define DK-equivalences and Rezk-completion in this context, consider the following pullbacks defined for any 
map $f\colon X\rightarrow Y$ between Segal objects.
\begin{align}\label{diagfcone}
\begin{gathered}
\xymatrix{
Mf\ar[r]\ar@{->>}[d]\ar@{}[dr]|(.3){\pbs} & \mathrm{Equiv}Y\ar@{->>}[d] \\
f\downarrow Y\ar@{->>}[d]\ar[r]\ar@{}[dr]|(.3){\pbs} & Y_1\ar@{->>}[d]^{(s,t)} \\
X_0 \times Y_0\ar[r]^{f_0\times 1}\ar@{->>}[d]_{\pi_1}\ar@{}[dr]|(.3){\pbs} & Y_0\times Y_0\ar@{->>}[d]^{\pi_1} \\
X_0\ar[r]_{f_0} & Y_0
}
\end{gathered}
\end{align}

\begin{definition}\label{defDKequiv}
Let $f\colon X\rightarrow Y$ be a map between Segal objects in $\mathbb{C}$. We say that
\begin{enumerate}
\item $f$ is \emph{fully faithful} if the natural gap map $\eta_f\colon X_1\rightarrow(f_0\times f_0)^{\ast}Y_1$ over 
$X_0\times X_0$ is a homotopy-equivalence.
\item $f$ is \emph{essentially surjective} if the composition
$Mf\twoheadrightarrow X_0\times Y_0\overset{\pi_2}{\twoheadrightarrow} Y_0$ is $(-1)$-connected.
\item $f$ is a \emph{DK-equivalence} if it is fully faithful and essentially surjective.
\end{enumerate}
\end{definition}
In the case $\mathbb{C}$ is the fibration category of Kan complexes, a map between Segal spaces is a DK-equivalence in 
the sense of Definition~\ref{defDKequiv} if and only if it is a Dwyer-Kan equivalence in the sense of
\cite[Section 7.4]{rezk}. Indeed, a map between Segal spaces $f\colon X\rightarrow Y$ is fully faithful, i.e.\ the map 
$\eta_f\colon X_1\rightarrow (f_0\times f_0)^{\ast}Y_1$ is a levelwise homotopy-equivalence, if and only if for all 
objects $x_1,x_2$ of $X$ presented as global elements $x_i\colon\Delta^0\rightarrow X_0$, the pullback
$(x_1,x_2)^{\ast}\eta_f\colon(x_1,x_2)^{\ast}X_1\rightarrow((f\times f)\circ(x_1,x_2))^{\ast}Y_1$ is a levelwise 
homotopy-equivalence. By definition, that is, if and only if the functors
$\mathrm{map}_f\colon\mathrm{map}_X(x_1,x_2)\rightarrow\mathrm{map}_Y(f_0(x_1),f_0(x_2))$ of mapping spaces as defined 
in \cite[5]{rezk} are homotopy-equivalences.
Similarly, one shows that $f\colon X\rightarrow Y$ is essentially surjective as defined in Definition~\ref{defDKequiv}.2 
if and only if the induced functor $\mathrm{Ho}(f)\colon\mathrm{Ho}(X)\rightarrow\mathrm{Ho}(Y)$ of associated 
homotopy categories is essentially surjective.

\begin{lemma}\label{lemmaDKequivinv}
All three notions in Definition~\ref{defDKequiv} are invariant under homotopy-equivalence between Segal objects. That 
means, given a square
\[\xymatrix{
W\ar[d]_f\ar[r] & X\ar[d]^g \\
Y\ar[r] & Z
}\]
between Segal objects such that the horizontal maps are levelwise homotopy-equivalences, then $f$ is fully faithful/
essentially surjective/a DK-equivalence if and only if $g$ is so.
\end{lemma}
\begin{proof}
Since homotopy-equivalences in $\mathbb{C}$ are preserved by pullback along fibrations, we obtain a homotopy-equivalence
\[\xymatrix{
W_1\ar[d]\ar[r]^{\simeq} & X_1\ar[d] \\
(f_0\times f_0)^{\ast}Y_1 \ar[r]_{\simeq} & (g_0\times g_0)^{\ast}Z_1
}\]
between the natural maps considered in Definition~\ref{defDKequiv}.1 associated to $f$ and $g$, respectively. It follows 
that $f$ is fully faithful if and only if $g$ is so by 2-out-of-3. Similarly, via Diagram (\ref{diagfcone}), we obtain a 
homotopy-equivalence of the form
\[\xymatrix{
Mf\ar@{->>}[d]\ar[r]^{\simeq} & Mg\ar@{->>}[d] \\
Y_0\ar[r]_{\simeq} & Z_0.
}\]
By Lemma~\ref{lemmacnnctprops}.5, $(-1)$-connectedness in $\mathbb{C}$ is invariant under homotopy-equivalence, and so the same 
follows for essential surjectivity in $\mathrm{S}(\mathbb{C})$. The case for DK-equivalences now follows immediately.
\end{proof}

Let $\mathrm{S}(\mathbb{C})^{\times}\subseteq\mathrm{S}(\mathbb{C})$ denote the subcategory of Segal objects and fully 
faithful maps. Let $\mathrm{US}(\mathbb{C})^{\times}\subseteq S(\mathbb{C})^{\times}$ denote the full 
subcategory of univalent Segal objects in $\mathbb{M}$.

Just as the BM-equivalences between univalent fibrations, the DK-equivalences between univalent Segal objects are 
exactly the levelwise homotopy-equivalences.

\begin{lemma}
Let $f\colon X\rightarrow Y$ be a map between univalent Segal objects in $\mathbb{C}$. The map $f$ is a DK-equivalence 
if and only if it is a levelwise homotopy-equivalence. In particular, every DK-equivalence between univalent Segal 
objects is in fact split surjective, so that the map $Mf\twoheadrightarrow Y_0$ is an acyclic fibration.
\end{lemma}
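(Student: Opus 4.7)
The plan is to reduce both directions to the analysis of $f_0\colon X_0\to Y_0$, using completeness to identify $\mathrm{Equiv}(Y)$ and $\mathrm{Equiv}(X)$ with path objects for $Y_0$ and $X_0$, and then to propagate a weak equivalence from $f_0$ to higher levels via full faithfulness and the Segal condition.

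For the forward direction, I would first prove that $f_0$ is a weak equivalence by showing it is both $(-1)$-connected and $(-1)$-truncated. Completeness of $Y$ makes the source projection $s\colon\mathrm{Equiv}(Y)\twoheadrightarrow Y_0$ an acyclic fibration: it is a fibration by sufficient fibrancy of $Y$, and it retracts the weak equivalence $s_0\colon Y_0\to\mathrm{Equiv}(Y)$. Pulling back along $f_0$ yields that $Mf\twoheadrightarrow X_0$ is also an acyclic fibration. Since both source and target retract the weak equivalence $s_0$, the maps $s,t\colon\mathrm{Equiv}(Y)\to Y_0$ agree in the homotopy category, so the target projection $Mf\to Y_0$ is homotopic to $f_0\circ(Mf\twoheadrightarrow X_0)$. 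Essential surjectivity then gives $(-1)$-connectedness of $f_0$. For $(-1)$-truncatedness, I would show that full faithfulness propagates through the Segal condition to give homotopy pullback squares $X_n\simeq(f_0^{\times(n+1)})^*Y_n$ for all $n$, and in particular a homotopy equivalence $\mathrm{Equiv}(X)\simeq(f_0\times f_0)^*\mathrm{Equiv}(Y)$ over $X_0\times X_0$. Completeness of both $X$ and $Y$ exhibits these as path objects (factoring the diagonals as weak equivalences followed by fibrations), yielding a weak equivalence $PX_0\simeq(f_0\times f_0)^*PY_0$ compatible with the canonical transport map $\mathtt{tr}_{f_0}$; Lemma~\ref{lemmamono} then yields $(-1)$-truncatedness. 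With $f_0$ a weak equivalence in hand, full faithfulness together with right properness upgrades to $f_1\colon X_1\xrightarrow{\sim}Y_1$, and the Segal condition extends this to $f_n$ for all $n$.

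The backward direction is essentially formal: if $f$ is a levelwise weak equivalence, then the canonical map $X_1\to(f_0\times f_0)^*Y_1$ factors $f_1$ through the weak equivalence $(f_0\times f_0)^*Y_1\xrightarrow{\sim}Y_1$ (using right properness and the fibration $(s,t)$), so full faithfulness follows by 2-of-3. Essential surjectivity follows because $Mf\twoheadrightarrow X_0$ is acyclic and $f_0$ is a weak equivalence, so the composite $Mf\to Y_0$ is a weak equivalence and in particular $(-1)$-connected. For the ``in particular'' statement, the map $Mf\to Y_0$ is a fibration, since by Diagram~(\ref{diagfcone}) it is the composition of pullbacks of the fibrations $\mathrm{Equiv}(Y)\twoheadrightarrow Y_1$ and $(s,t)\colon Y_1\twoheadrightarrow Y_0\times Y_0$ with the product projection $X_0\times Y_0\twoheadrightarrow Y_0$. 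A DK-equivalence between complete Segal objects is a levelwise weak equivalence by the forward direction, so $Mf\twoheadrightarrow Y_0$ is an acyclic fibration, which in particular splits.

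The main obstacle will be the careful verification that the equivalence $PX_0\simeq(f_0\times f_0)^*PY_0$ obtained from full faithfulness and completeness is compatible, up to homotopy, with the canonical transport map $\mathtt{tr}_{f_0}$ of Lemma~\ref{lemmamono}. This requires tracking the naturality of the constructions $\mathrm{Inv}$ and $\mathrm{Equiv}$ in $f$ through the pullbacks in Diagram~(\ref{diageqdj}), together with the homotopy invariance of these constructions on sufficiently fibrant simplicial objects established in Lemma~\ref{compltehtyinv}. Everything else amounts to routine manipulation using right properness and the Segal condition.
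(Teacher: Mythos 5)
Your proposal is correct and takes essentially the same route as the paper's own proof: reduce to $f_0$ and $f_1$, obtain $(-1)$-truncatedness of $f_0$ from full faithfulness, completeness and Lemma~\ref{lemmamono}, obtain $(-1)$-connectedness from essential surjectivity together with acyclicity of $\mathrm{Equiv}(Y)\twoheadrightarrow Y_0$, and prove the converse and the splitting of $Mf\twoheadrightarrow Y_0$ via right properness and Diagram~(\ref{diagfcone}). The only (harmless) deviation is that you cancel the acyclic fibration $Mf\twoheadrightarrow X_0$ using the homotopy between $s$ and $t$ on $\mathrm{Equiv}(Y)$, whereas the paper instead compares the composite $\mathrm{Equiv}(X)\rightarrow Mf\rightarrow\mathrm{Equiv}(Y)$ with $f_0$, using that $\mathrm{Equiv}(X)$ and $Mf$ are both contractible over $X_0$.
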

\begin{proof}
If $f$ is a levelwise homotopy-equivalence, the maps $f_0\times f_0\colon X_0\times X_0\rightarrow Y_0\times Y_0$ and 
$f_1\colon X_1\rightarrow Y_1$ are homotopy-equivalences. Hence, the square
\[\xymatrix{
X_1\ar@{->>}[d]\ar[r]^{f_1} & Y_1\ar@{->>}[d] \\
X_0\times X_0\ar[r]_{f_0\times f_0} & Y_0 \times Y_0 
}\]
is homotopy-cartesian and the map $f$ is fully faithful. Furthermore, from Diagram~(\ref{diagfcone}) we see that the top 
map $Mf\rightarrow\text{Equiv}Y$ is a homotopy-equivalence, and so is $\mathrm{Equiv}Y\twoheadrightarrow Y_0$ by 
univalence of $Y$. Thus, the same holds for their composition $Mf\twoheadrightarrow Y_0$ which was to show.

Vice versa, assume $f$ is a DK-equivalence. Since both $X$ and $Y$ are Segal objects, it suffices to show that $f_0$ and 
$f_1$ are homotopy-equivalences. If $f_0$ is a homotopy-equivalence, then so will be $f_0\times f_0$ as well as its 
pullback along the boundary fibration $Y_1\twoheadrightarrow Y_0\times Y_0$. Since $f$ is full faithful, it follows that 
$f_1$ is a homotopy-equivalence. We are therefore left to show that $f_0$ is both $(-1)$-truncated and $(-1)$-connected. 

In order to show $(-1)$-truncatedness, we note that fully faithfulness of $f$ implies that the map
\[\text{Equiv}X\rightarrow(f_0\times f_0)^{\ast}\text{Equiv} Y\]
is a homotopy-equivalence. But by univalence of $X$ and $Y$ this map is equivalent to the transport
$\mathrm{tr}_{f_0}\colon PX_0\rightarrow (f_0\times f_0)^{\ast}PY_0$, and thus $(-1)$-truncatedness of $f_0$ follows 
from Lemma~\ref{lemmamono}. Towards $(-1)$-connectedness, on the one hand, we note that 
by Diagram~(\ref{diagfcone}) and essential surjectivity of $f$, the natural map $Mf\rightarrow\text{Equiv}Y$ is
$(-1)$-connected whenever $Y$ is univalent. On the other hand, the natural map $\text{Equiv}X\rightarrow Mf$ is a 
homotopy-equivalence since both $\text{Equiv}X$ and $Mf$ are contractible over $X_0$. Thus, the composition
$\text{Equiv}X\rightarrow\text{Equiv}Y$ is both $(-1)$-connected and homotopy-equivalent to
$f_0\colon X_0\rightarrow Y_0$.
\end{proof}

Let $\mathcal{W}\subseteq s\mathbb{C}$ denote the class of levelwise homotopy-equivalences in $\mathbb{C}$.
 
\begin{corollary}\label{correlinclsegal}
The inclusion $\mathrm{US}(\mathbb{C})^{\times}\subseteq\mathrm{S}(\mathbb{C})^{\times}$ induces a relative inclusion
\[(\mathrm{US}(\mathbb{C})^{\times},\mathcal{W})\hookrightarrow(\mathrm{S}(\mathbb{C})^{\times},\mathrm{DK})\]
of relative categories.\qed
\end{corollary}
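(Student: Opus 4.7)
The statement is essentially a direct consequence of the preceding lemma, so the plan is short. I would set up what needs to be checked for a relative inclusion and then appeal to the lemma for both directions.

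First, I would observe that the underlying functor is already an inclusion of categories, since $\mathrm{CS}(\mathbb{M})^{\times}$ is defined as a full subcategory of $\mathrm{S}(\mathbb{M})^{\times}$. Hence the only thing left to verify is the compatibility of the marked classes, namely that a morphism in $\mathrm{CS}(\mathbb{M})^{\times}$ lies in $\mathcal{W}$ if and only if it lies in $\mathrm{DK}$ (viewed as a morphism of $\mathrm{S}(\mathbb{M})^{\times}$). In other words, $\mathrm{CS}(\mathbb{M})^{\times} \cap \mathrm{DK} = \mathcal{W}\cap\mathrm{CS}(\mathbb{M})^{\times}$.

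Both inclusions are given by the previous lemma. For the inclusion $\mathcal{W}\cap\mathrm{CS}(\mathbb{M})^{\times}\subseteq\mathrm{DK}$, a levelwise homotopy equivalence $f\colon X\to Y$ between complete Segal objects is fully faithful by the homotopy cartesian square argument in the lemma (so in particular $f$ already is a morphism of $\mathrm{S}(\mathbb{M})^{\times}$), and essentially surjective because the composite $Mf\twoheadrightarrow\mathrm{Equiv}Y\twoheadrightarrow Y_0$ is a homotopy equivalence, hence $(-1)$-connected. Conversely, every DK-equivalence between complete Segal objects is a levelwise homotopy equivalence by the other direction of the lemma.

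The only subtle point to make explicit is that a levelwise homotopy equivalence between Reedy fibrant Segal objects is automatically a morphism in $\mathrm{S}(\mathbb{M})^{\times}$, i.e.\ fully faithful; but this is immediate since both vertical fibrations in the relevant square are Reedy, and the two legs $f_0\times f_0$ and $f_1$ are homotopy equivalences, making the square homotopy cartesian. There is no real obstacle here — the corollary is a direct bookkeeping consequence of the lemma together with the definition of a relative inclusion.
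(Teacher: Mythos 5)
Your proposal is correct and follows the same route as the paper: the corollary is stated there with no separate proof precisely because, once one unwinds the definition of relative inclusion, it reduces to the preceding lemma's statement that DK-equivalences between complete Segal objects coincide with levelwise homotopy equivalences (full faithfulness of levelwise equivalences being part of the lemma's easy direction). Your extra remark on why a levelwise equivalence is automatically a morphism of $\mathrm{S}(\mathbb{M})^{\times}$ is exactly the homotopy-cartesian-square argument already contained in the lemma's proof, so nothing is missing.
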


Following \cite[14]{rezk}, given a map $f\colon X\rightarrow Y$ of Segal objects in $\mathbb{C}$, we will say that $f$ 
is a \emph{Rezk-completion} whenever $f$ is a DK-equivalence and $Y$ is univalent.

\begin{remark}\label{remrezkcomplexples}
Rezk-completions exist in a plethora of model categorical examples. Generalizing Rezk's constructions, Barwick has shown 
in \cite{barwickthesis} that Reedy fibrant Segal objects and complete Segal objects are the fibrant objects in a model 
structure on $s\mathbb{M}$ whenever $\mathbb{M}$ is a simplicial and monoidal model category that is combinatorial and 
left-proper.
From his results it follows that the homotopy theory of complete Segal objects in $\mathbb{M}$ is the localization of 
the homotopy theory of Reedy fibrant Segal objects at the DK-equivalences. In particular, the complete Segal objects are 
exactly the DK-local Reedy fibrant Segal objects. 
It also follows that Rezk-completion of a Segal object is simply given by fibrant replacement in the model structure for 
complete Segal objects (and hence always exists). 
\end{remark}

\subsection{A comparison of associated homotopy theories}\label{secsubcomparecompls}

Let $\mathbb{C}$ again be a type theoretic fibration category with $(-1)$-truncations. The nerve 
construction of a fibration $p$ in $\mathbb{C}$ to a Segal object $N(p)$ as defined in (\ref{diagdefNp}) induces a relative 
functor of relative categories in the following way.

\begin{proposition}\label{propcompletion}
Every homotopy-cartesian square
\begin{align}
\begin{gathered}
\xymatrix{
E\ar[r]\ar@{->>}[d]_p& E\sprime\ar@{->>}[d]^{p\sprime} \\
B\ar[r]_{\iota} & B\sprime
}
\end{gathered}
\end{align}
between fibrations $p,p\sprime$ in $\mathbb{C}$ functorially induces a fully faithful map
\[N(\iota)\colon N(p)\rightarrow N(p\sprime)\]
between associated Segal objects in $\mathbb{C}$. Furthermore, the following hold.
\begin{enumerate}
\item If $\iota$ is $(-1)$-connected, then $N(\iota)$ is also essentially surjective.
\item Suppose $p\sprime$ is univalent. If $N(\iota)$ is essentially surjective, then $\iota$ is $(-1)$-connected.
\end{enumerate}
We thus obtain a relative functor of the form
\begin{align*}
N\colon (\mathcal{F}^{\times},\mathrm{BM})\rightarrow (\mathrm{S}(\mathbb{C})^{\times},\mathrm{DK}).
\end{align*}
\end{proposition}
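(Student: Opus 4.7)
The plan splits into the construction of $N(\iota)$ together with its fully faithfulness, and then an analysis of essential surjectivity carried out in the internal type theory of $\mathbb{C}$.

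First I would construct $N(\iota)$ and verify fully faithfulness simultaneously. The homotopy cartesian square provides a fibrewise weak equivalence $E \simeq \iota^{\ast}E'$ over $B$ between fibrations. Taking the fibrewise internal hom over $B\times B$ and using the Beck--Chevalley isomorphism
\[(\iota\times\iota)^{\ast}\mathrm{Fun}(p') \;\cong\; [\pi_1^{\ast}\iota^{\ast}E',\pi_2^{\ast}\iota^{\ast}E']_{B\times B},\]
the equivalence $E \simeq \iota^{\ast}E'$ induces a weak equivalence
\[\mathrm{Fun}(p)\xrightarrow{\simeq}(\iota\times\iota)^{\ast}\mathrm{Fun}(p')\]
of objects over $B\times B$. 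Composition with the canonical projection to $\mathrm{Fun}(p')$ gives a map of reflexive graphs over $\iota$ which is compatible with the monoid multiplication of Proposition~\ref{propfunicat}, hence an internal functor of internal categories; applying the nerve functor of Proposition~\ref{propnervesegal} yields $N(\iota)\colon N(p)\to N(p')$, manifestly functorial in pasting of homotopy cartesian squares. The displayed weak equivalence is precisely the fully faithfulness condition of Definition~\ref{defDKequiv} at level one.

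Next I would analyse essential surjectivity by unfolding Diagram~(\ref{diagfcone}) for $f = N(\iota)$. Combining Remark~\ref{defequivXalt} with Remark~\ref{remsegalhmtpy} identifies
\[Mf \;\simeq\; \sum_{b:B}\sum_{b':B'}\mathrm{Equiv}\bigl(E'(\iota b),E'(b')\bigr),\]
with the structural map $Mf\twoheadrightarrow B'$ corresponding to the second projection. For part 1, the assignment $b\mapsto (b,\iota b,\mathrm{id}_{E'(\iota b)})$ defines a map $B \to Mf$ whose composite with $Mf \to B'$ is $\iota$; consequently the homotopy fibre of $Mf\to B'$ over any $b'$ contains the homotopy fibre $\iota^{-1}(b')$ as a retract, so merely inhabited fibres of $\iota$ produce merely inhabited fibres of $Mf\to B'$. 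For part 2, univalence of $p'$ says precisely that the path-transport map
\[\bigl(a=_{B'}b'\bigr)\;\longrightarrow\;\mathrm{Equiv}\bigl(E'(a),E'(b')\bigr)\]
is an equivalence for all $a,b':B'$. Substituting $a=\iota b$ and summing over $b:B$ identifies the fibre of $Mf\to B'$ over $b'$ with $\sum_{b:B}(\iota b=_{B'}b') = \iota^{-1}(b')$, so that $(-1)$-connectedness of $Mf\to B'$ forces $(-1)$-connectedness of $\iota$.

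The main obstacle I anticipate is the strict-versus-homotopy bookkeeping in the first part: $E$ is only a homotopy pullback of $E'$, so the induced map $E\to\iota^{\ast}E'$ must be factored as an acyclic cofibration followed by a fibration before one may treat $\mathrm{Fun}(-)$ as homotopy invariant. Here the logical model category hypothesis is essential: pullback along a fibration is left Quillen (with right adjoint the dependent product), so its right adjoint preserves weak equivalences between fibrations, and cofibrancy of fibrant objects guarantees that the resulting weak equivalences are genuine homotopy equivalences in $\mathbb{C}$. Once this is arranged, compatibility of the constructed map with the multiplication of Proposition~\ref{propfunicat} follows from naturality of the Beck--Chevalley isomorphisms, and the rest of the argument proceeds as outlined.
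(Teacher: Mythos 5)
Your proposal is correct, and it splits in an interesting way relative to the paper. For fully faithfulness you follow essentially the same route as the paper: the homotopy cartesian square induces an equivalence $\mathrm{Fun}(p)\simeq(\iota\times\iota)^{\ast}\mathrm{Fun}(p\sprime)$ over $B\times B$ (the paper obtains this from the homotopy invariance result it cites as Proposition 1.6.1 of the author's thesis, you reconstruct it via Beck--Chevalley for the dependent products, which is available in the logical model category setting), and this map is by definition $N(\iota)_1\rightarrow(N(\iota)_0\times N(\iota)_0)^{\ast}N(p\sprime)_1$. For parts 1 and 2, however, you argue internally where the paper argues externally. The paper's part 1 shows that $\iota\times 1\colon B\times B\sprime\rightarrow B\sprime\times B\sprime$ is $(-1)$-connected, pulls this back along fibrations to get $(-1)$-connected maps $P(\iota)\rightarrow P(B\sprime)$ and $M(N(\iota))\rightarrow\mathrm{Equiv}(p\sprime)$, and then uses contractibility of $P(B\sprime)$ over $B\sprime$ to produce a section of the $(-1)$-truncation of $M(N(\iota))\twoheadrightarrow B\sprime$; its part 2 uses that univalence makes $P(\iota)\rightarrow M(N(\iota))$ an equivalence and that $B\rightarrow P(\iota)\twoheadrightarrow B\sprime$ is the mapping-path-space factorization of $\iota$. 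Your version instead computes the fibres of $Mf\twoheadrightarrow B\sprime$ type-theoretically as $\sum_{b:B}\mathrm{Equiv}(E\sprime(\iota b),E\sprime(b\sprime))$, uses the map $b\mapsto(b,\iota b,\mathrm{id})$ for part 1 and the transport-equivalence formulation of univalence for part 2. This is legitimate because the paper explicitly allows reading $(-1)$-connectedness via propositional truncation in the internal type theory; your part 1 is in fact a little more direct than the paper's, and your part 2 is the internal shadow of the paper's mapping-path-space argument. What the paper's external phrasing buys is that it never has to invoke the translation between the factorization-system definition of $(-1)$-connectedness and mere inhabitation of fibres; you should make that invocation explicit.

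Two small caveats. First, in part 1 the claim that the homotopy fibre of $\iota$ sits inside the fibre of $Mf\twoheadrightarrow B\sprime$ \emph{as a retract} is unjustified (a retraction would essentially require univalence of $p\sprime$); fortunately you only use the existence of the fibrewise map, which suffices to propagate mere inhabitation, so the overstatement is harmless. Second, the strictification needed to get an actual simplicial map $N(\iota)$ (rather than a zig-zag) from a merely homotopy cartesian square is only sketched in your first paragraph, but the paper is equally brief on this point, so this does not count against you.
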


\begin{proof}
Let $p\colon E\twoheadrightarrow B$ and $p\sprime\colon E\sprime\twoheadrightarrow B\sprime$ be fibrations in
$\mathbb{C}$ and $N(p)$ and $N(p\sprime)$ in $s\mathbb{C}$ their associated Segal objects. Let
\[\xymatrix{
E\ar[r]\ar@{->>}[d]_p& E\sprime\ar@{->>}[d]^{p\sprime} \\
B\ar[r]_{\iota} & B\sprime.
}\]
be a homotopy-cartesian square so that we obtain a homotopy-equivalence
\[\xymatrix{
\mathrm{Fun}(p)\ar@/^/[drr]\ar@{-->}[dr]^{\sim}_{}\ar@/_/@{->>}[ddr] & & \\
 & \mathrm{Fun}(\iota^{\ast}p\sprime)\ar[r]\ar@{}[dr]|(.3){\pbs}\ar@{->>}[d] & \mathrm{Fun}(p\sprime)\ar@{->>}[d]\\
 & B\times B\ar[r]_(.4){\iota\times\iota} & B\sprime\times B\sprime,
}\]
essentially by the proof of \cite[Proposition 1.6.1]{thesis}. But the dashed map in the diagram is exactly
\[N(\iota)_1\colon N(p)_1\rightarrow(N(\iota)_0\times N(\iota)_0)^{\ast}N(p\sprime)_1\]
and so the map $N(\iota)\colon N(p)\rightarrow N(p\sprime)$ is fully faithful. Functoriality follows from a straight-forward
computation.

For Part 1, note that since $PB\sprime$ is contractible over $B\sprime$, the idtoequiv-lift
$P(B\sprime)\rightarrow\mathrm{Equiv}(p\sprime)$ over $B\sprime$ shows that the fibration
$\mathrm{Equiv}(p\sprime)\twoheadrightarrow B\sprime$ is $(-1)$-connected. Suppose that
$\iota\colon B\rightarrow B\sprime$ is $(-1)$-connected as well. Since $(-1)$-connected maps in $\mathcal{C}$ are 
preserved by pullback along fibrations, the map
\[\iota\times 1\colon B\times B\sprime\rightarrow B\sprime\times B\sprime\]
is $(-1)$-connected and hence so are both of the following horizontal maps.
\[\xymatrix{
M(N(\iota))\ar@{->>}[d]\ar[r]\ar@{}[dr]_(.3){\pbs} & \mathrm{Equiv}(p\sprime)\ar@{->>}[d] \\
B\times B\sprime\ar[r]_{\iota\times 1}\ar@{->>}@/_/[dr]_{\pi_2} & B\sprime\times B\sprime\ar@{->>}[d]^{\pi_2} \\ 
 & B\sprime
}\]
The composition $M(N(\iota))\twoheadrightarrow B\sprime$ is hence a composition of the two $(-1)$-connected maps 
$M(N(\iota))\rightarrow \mathrm{Equiv}(p\sprime)$ and $\mathrm{Equiv}(p\sprime)\twoheadrightarrow B\sprime$ and as such 
$(-1)$-connected itself.

For Part 2, assume $p\sprime$ is univalent and $N(\iota)$ is essentially surjective. By univalence of $p\sprime$, 
the map $P(B\sprime)\rightarrow \text{Equiv}(p\sprime)$ is a homotopy-equivalence over $B\sprime\times B\sprime$, and 
hence so is its pullback
\[\xymatrix{
P(\iota)\ar[r]\ar@{->>}@/_/[dr] & M(N(\iota))\ar@{->>}[d] \\
& B\times B\sprime 
}\]
along $\iota\times 1\colon B\times B\sprime\rightarrow B\sprime\times B\sprime$.
By essential surjectivity of $N(\iota)$, the composite fibration $M(N(\iota))\twoheadrightarrow B\sprime$ is
$(-1)$-connected, and thus so is the composite fibration
$P(\iota)\twoheadrightarrow B\times B\sprime\overset{\pi_2}{\twoheadrightarrow}B\sprime$. Furthermore, the natural gap 
map
\[\xymatrix{
B\ar[dr]\ar@/^1pc/[drr]^{\mathrm{idtoequiv}_{B\sprime}\circ\iota}\ar@/_1pc/@{=}[dddr] & & \\
 & P(\iota)\ar@{->>}[d]\ar@{}[dr]|(.3){\pbs}\ar[r] & P(B\sprime)\ar@{->>}[d]\ar@{->>}@/^2pc/[dd]^{\rotatebox[origin=c]{90}{$\sim$}} \\
 & B\times B\sprime\ar@{->>}[d]^{\pi_1}\ar@{}[dr]|(.3){\pbs}\ar[r] & B\sprime\times B\sprime\ar@{->>}[d]^{\pi_1} \\
 & B\ar[r]_{\iota} & B\sprime 
}\]
is a homotopy-equivalence by 2-out-of-3. Thus, the composite
\[B\rightarrow P(\iota)\twoheadrightarrow B\times B\sprime \overset{\pi_2}{\twoheadrightarrow} B\sprime\]
is a factorization of $\iota\colon B\rightarrow B\sprime$ into a homotopy-equivalence followed by a $(-1)$-connected 
fibration. Thus, the map $\iota$ is $(-1)$-connected as well.
\end{proof}

\begin{corollary}\label{thm5sum1}
The nerve construction induces a pullback
\begin{equation}\label{mainsquare}
\begin{gathered}
\xymatrix{
(\mathcal{UF}^{\times},\mathcal{W})\ar[r]^(.45)N\ar@{^(->}[d]\ar@{}[dr]|(.3){\pbs} & (\mathrm{US}(\mathbb{C})^{\times},\mathcal{W})\ar@{^(->}[d]\\
(\mathcal{F}^{\times},\mathrm{BM})\ar[r]_(.45)N & (\mathrm{S}(\mathbb{C})^{\times},\mathrm{DK})
}
\end{gathered}
\end{equation}
of relative categories such that
\begin{enumerate}
\item the vertical arrows are relative inclusions;
\item the relative functor $N$ reflects DK-equivalences with univalent codomain to BM-equivalences with univalent 
codomain (and hence reflects Rezk-completion to univalent completion).
\end{enumerate}
\end{corollary}

\begin{proof}
The vertical two relative functors exist and are relative inclusions by Corollary~\ref{corufincbm} and 
Corollary~\ref{correlinclsegal}. The horizontal two relative functors exist by Proposition~\ref{propcompletion}. The resulting 
square is a pullback square by Lemma~\ref{lemmasimpunivalence}. Part 2 follows from the fact that the square is a pullback square 
and Proposition~\ref{propcompletion}.2.
\end{proof}

Whenever $\mathbb{C}$ arises as the category of fibrant objects of a logical model category $\mathbb{M}$ whose fibrant 
objects are cofibrant, we considered the Reedy fibrant nerve construction $X_p:=\mathbb{R}N(p)$ of fibrations $p$ in $\mathbb{C}$ in 
Section~\ref{secunivvscompl}.
In this case, let $\mathrm{S}_f(\mathbb{C})^{\times}\subset\mathrm{S}(\mathbb{C})^{\times}$ be the full subcategory of Reedy fibrant 
Segal objects, and $\mathrm{CS}(\mathbb{C})^{\times}\subset\mathrm{S}_f(\mathbb{C})^{\times}$ be the full subcategory of complete 
Segal objects. 

\begin{proposition}\label{thmcmpcompletion}
Suppose $\mathbb{C}$ is the category of fibrant objects associated to a logical model category whose fibrant objects are 
cofibrant. 
Then any functorial Reedy fibrant replacement functor $\mathbb{R}$ of simplicial 
objects in $\mathbb{C}$ induces pullback squares
\begin{equation}\label{mainsquare2}
\begin{gathered}
\xymatrix{
(\mathcal{UF}^{\times},\mathcal{W})\ar[r]^(.45)N\ar@{^(->}[d]\ar@{}[dr]|(.3){\pbs} & (\mathrm{US}(\mathbb{C})^{\times},\mathcal{W})\ar@{^(->}[d]\ar[r]^{\mathbb{R}}\ar@{}[dr]|(.3){\pbs} & (\mathrm{CS}(\mathbb{C})^{\times},\mathcal{W})\ar@{^(->}[d]\\
(\mathcal{F}^{\times},\mathrm{BM})\ar[r]_(.45)N & (\mathrm{S}(\mathbb{C})^{\times},\mathrm{DK})\ar[r]^{\mathbb{R}} & (\mathrm{S}(\mathbb{C})_f^{\times},\mathrm{DK})
}
\end{gathered}
\end{equation}
of relative categories such that
\begin{enumerate}
\item the vertical arrows are relative inclusions;
\item the relative functor $X:=\mathbb{R}\circ N$ reflects DK-equivalences with complete codomain to BM-equivalences with univalent 
codomain (and hence reflects Rezk-completion to univalent completion).
\end{enumerate}

\end{proposition}
\begin{proof}
The square on the left hand side of Diagram~(\ref{mainsquare2}) is covered by Corollary~\ref{thm5sum1}. The vertical arrow on the 
right hand side is a relative inclusion, because it is the restriction of a relative inclusion along two relative inclusions, and 
its underlying functor $\mathrm{CS}(\mathbb{C})^{\times}\hookrightarrow\mathrm{S}(\mathbb{C})_f^{\times}$ is an inclusion of 
categories.

The upper horizontal arrow $\mathbb{R}$ is a relative functor by Theorem~\ref{intc=simpc}, and because Reedy fibrant replacement 
preserves and reflects homotopy-equivalences. The lower vertical arrow $\mathbb{R}$ is a relative functor that reflects
DK-equivalences, because Reedy fibrant replacements are levelwise homotopy-equivalences and both fully faithfulness as well as 
essential surjectivity are preserved and reflected along levelwise pairs of levelwise homotopy-equivalences between Segal objects by 
Lemma~\ref{lemmaDKequivinv}. 
The square is a pullback square again by Theorem~\ref{intc=simpc}.

Part 2 now follows from Corollary~\ref{thm5sum1}, the fact that the composite square (\ref{mainsquare2}) is a pullback square,
and that $\mathbb{R}\colon\mathrm{S}(\mathbb{C})^{\times}\rightarrow\mathrm{S}(\mathbb{C})_f^{\times}$ reflects DK-equivalences.
\end{proof}

\begin{corollary}
The univalent completion $u(p)$ of a Kan fibration $p\in\mathbf{S}$ induces an acyclic cofibration
$X_p\rightarrow X_{u(p)}$ in the model structure $(s\mathbf{S},\mathrm{CS})$ of complete 
Segal spaces. In other words, the complete Segal space $X_{u(p)}$ is a fibrant replacement of the Segal space
$X_p$ in Rezk's model structure $(s\mathbf{S},\mathrm{CS})$ for complete Segal spaces.
\end{corollary}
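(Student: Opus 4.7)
The plan is to combine Proposition~\ref{thmcmpcompletion} with Rezk's characterization of weak equivalences in $(s\mathbf{S},\mathrm{CS})$ restricted to Segal spaces, and then verify the cofibration condition by choosing nice representatives.

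First, I would observe that the univalent completion square
\[\xymatrix{E\ar[r]\ar@{->>}[d]_p & u(E)\ar@{->>}[d]^{u(p)} \\ B\ar[r]_{\iota} & u(B)}\]
has $u(p)$ univalent by definition. Hence by Corollary~\ref{intc=simpcforfibs}, $X_{u(p)}$ is a complete Segal space, that is, fibrant in $(s\mathbf{S},\mathrm{CS})$. Moreover, since the square is homotopy cartesian with univalent right leg and $(-1)$-connected base map, Proposition~\ref{thmcmpcompletion} yields that the induced map $X_p\rightarrow X_{u(p)}$ is fully faithful and essentially surjective, i.e.\ a DK-equivalence between Segal spaces. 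By Rezk's theorem (\cite[Theorem 7.7]{rezk}), DK-equivalences between Segal spaces are exactly the CS-weak equivalences between them, so $X_p\rightarrow X_{u(p)}$ is a weak equivalence in Rezk's model structure.

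It remains to verify that, for suitable choices, the map $X_p\rightarrow X_{u(p)}$ is a cofibration in $(s\mathbf{S},\mathrm{CS})$, i.e.\ a monomorphism of simplicial spaces. By the construction of univalent completion in Proposition~\ref{propexunivcomp}, we may take $\iota\colon B\rightarrow u(B)$ to be a $(-1)$-connected cofibration (hence a monomorphism) in $\mathbf{S}$, and then $E\rightarrow u(E)$ is obtained as a pullback of $\iota$ along a fibration, hence also a monomorphism. The internal-hom construction defining $\mathrm{Fun}(p)$ preserves monomorphisms in the argument, so the induced map $N(p)\rightarrow N(u(p))$ is a levelwise monomorphism of simplicial spaces. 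Finally, the standard Reedy fibrant replacement functor on $s\mathbf{S}$ can be constructed by successive pushout-products with generating acyclic cofibrations, hence preserves monomorphisms. Choosing such a functorial replacement yields a monomorphism $X_p\hookrightarrow X_{u(p)}$.

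The main obstacle I anticipate is the compatibility of the Reedy replacement with the monomorphism condition: one has to choose the replacement functor so that it simultaneously preserves monomorphisms and keeps the constructions of $\mathrm{Equiv}$ and $J^{(2)}\setminus(-)$ well behaved (as used implicitly via Proposition~\ref{thmcmpcompletion}). In the context of simplicial sets this is standard, since the Reedy model structure on $s\mathbf{S}$ is cofibrantly generated with monomorphisms as cofibrations, so functorial fibrant replacement via the small object argument automatically produces cofibrations; the fully faithful/essentially surjective analysis from Proposition~\ref{thmcmpcompletion} is unaffected by this choice. Combining all three ingredients — fibrancy of $X_{u(p)}$ in CS, the DK-equivalence property, and the monomorphism condition — yields the claimed acyclic cofibration, and in particular exhibits $X_{u(p)}$ as a fibrant replacement of $X_p$ in $(s\mathbf{S},\mathrm{CS})$.
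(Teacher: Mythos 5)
Your main line of argument is exactly the paper's: Corollary~\ref{intc=simpcforfibs} gives completeness of $X_{u(p)}$, Proposition~\ref{thmcmpcompletion} gives that $X_p\rightarrow X_{u(p)}$ is a DK-equivalence of Segal spaces, and Rezk's Theorem 7.7 (DK-equivalences between Segal spaces are precisely the CS-weak equivalences between them) converts this into a weak equivalence with fibrant codomain in $(s\mathbf{S},\mathrm{CS})$. That is the entire content of the paper's proof, which does not address the cofibration condition explicitly.

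Your additional verification that the map can be taken to be a monomorphism goes beyond the paper, and it is the part where your argument has gaps. First, the map $\mathrm{Fun}(p)\rightarrow\mathrm{Fun}(u(p))$ is not obtained by applying the internal hom to the monomorphism $E\rightarrow u(E)$ (the hom is contravariant in one slot), so ``the internal-hom construction preserves monomorphisms in the argument'' is not the right reason; the correct observation is that for the strict pullback $p=\iota^{\ast}u(p)$ one has $\mathrm{Fun}(p)\cong(\iota\times\iota)^{\ast}\mathrm{Fun}(u(p))$, so the comparison map is a base change of the monomorphism $\iota\times\iota$ and hence levelwise the nerve map $N(p)\rightarrow N(u(p))$ is monic. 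Second, and more seriously, a functorial Reedy fibrant replacement produced by the small object argument does \emph{not} in general send a monomorphism $X\rightarrow Y$ to a monomorphism $RX\rightarrow RY$; the small object argument only guarantees that the unit maps $X\rightarrow RX$ are (acyclic) cofibrations, which is not what you need. To genuinely arrange $X_p\rightarrow X_{u(p)}$ to be a cofibration one has to choose the replacements by hand, e.g.\ fix a Reedy fibrant replacement $X_{u(p)}$ of $N(u(p))$, lift $N(p)\rightarrow X_{u(p)}$ along an acyclic cofibration $N(p)\rightarrow X_p'$, factor the resulting map through a mapping cylinder in the Reedy structure (all objects being cofibrant), and then replace the cylinder fibrantly; this produces a monomorphic CS-equivalence into a Reedy fibrant replacement of $N(u(p))$. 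Since the paper itself only proves the weak-equivalence statement, this flaw does not affect the part of your argument that parallels the paper, but as written your justification of the ``acyclic cofibration'' clause does not go through.
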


\begin{proof}
By \cite[Theorem 7.7]{rezkhtytps}, the weak equivalences between Segal spaces in the model structure
$(s\mathbf{S},\mathrm{CS})$ for complete Segal spaces are exactly the Dwyer-Kan equivalences.
\end{proof}

\begin{remark}\label{remend}
We have seen that if one starts with a univalent ambient fibration $\pi\colon\tilde{U}\rightarrow U$, then univalent 
completion of $\pi$-small fibrations exists
and induces a Rezk-completion of associated Segal objects. Without the assumption of univalence of $\pi$, criteria for 
the existence of small univalent completions are hard to find. Contrarily, Rezk-completion is often constructed as a 
fibrant replacement functor as referred to in Remark~\ref{remrezkcomplexples}.

Hence, the natural question arises whether univalent completion can be understood as the fibrant replacement in some 
homotopical structure on the category of fibrations in $\mathbb{C}$.
Indeed, it may be interesting to note that both Rezk and Barwick first prove the existence of Rezk-completion in order 
to show that complete Segal objects are indeed the DK-local objects.
\end{remark}

\begin{remark}\label{remintrezkcompl}
In \cite[Theorem 8.5]{aksrezkcompl}, the authors show that every precategory in the syntax of Homotopy Type Theory has 
a Rezk-completion. The tools used in their syntactic proof are basically the same ones used in the proof of 
Proposition~\ref{propexunivcomp}. In particular, it may be interesting to note that \cite[Theorem 8.5]{aksrezkcompl} 
does not generate Rezk-completions ex nihilo either, but assumes the existence of an ambient univalent type universe as well. However 
note that Segal objects in a type theoretic fibration category $\mathbb{C}$ are not the same as precategories in the internal type 
theory $\mathcal{T}_{\mathbb{C}}$.
Yet, whenever a type family $p\colon E\twoheadrightarrow B$ is $0$-truncated (i.e.\ its path-object fibration 
$P_BE\twoheadrightarrow E\times_B E$ is $(-1)$-truncated) and $\mathbb{C}$ satisfies function 
extensionality (see Section~\ref{secsubBMloc}), the type family $\mathrm{Fun}(p)\twoheadrightarrow B\times B$ yields a 
precategory $\mathfrak{C}(p)$ in $\mathcal{T}_{\mathbb{C}}$ via Proposition~\ref{propfunicat} as well. Whenever the univalent 
completion $u(p)$ from Proposition~\ref{propexunivcomp} is again $0$-truncated (that holds e.g.\ whenever the ambient univalent 
fibration $\pi$ itself is $0$-truncated)
it follows along the lines of Proposition~\ref{propcompletion} that the Rezk-completion of this precategory $\mathfrak{C}(p)$ 
in the sense of \cite{aksrezkcompl} is given by the precategory $\mathfrak{C}(u(p))$ given by
$\mathrm{Fun}(u(p))\twoheadrightarrow u(B)\times u(B)$. 
\end{remark}

\subsection{Univalence as a locality condition}\label{secsubBMloc}

Motivated by Rezk's and Barwick's characterization of complete Segal objects as the DK-local Reedy fibrant Segal objects 
in a large class of model categories, we close this paper with a characterization of the $\pi$-small univalent 
fibrations as the $\pi$-small BM-local fibrations in a large class of type theoretic fibration categories $\mathbb{C}$ whenever $\pi$ 
is univalent itself.

Therefore, in the following we consider fibrations in $\mathbb{C}$ that exhibit the \emph{homotopy lifting property},
and suppose that $\mathbb{C}$ has $(-1)$-truncations and satisfies \emph{function extensionality}. Here, we say that a fibration 
$p\colon E\twoheadrightarrow B$ has the homotopy lifting property if 
for every homotopy $H\colon C\rightarrow PB$ and every map $e\colon C\rightarrow E$ such that $p\circ e=s\circ H$, there is a 
homotopy $K\colon C\rightarrow PE$ such that $s\circ K=e$ and $p\circ t\circ K=t\circ H$ (this is weaker than what is usually 
called the homotopy lifting property in model categories). It will be used to replace homotopy-commutative 
squares between fibrations in $\mathbb{C}$ by strictly commutative squares between the same fibrations up to homotopy.
Function extensionality holds in $\mathbb{C}$ if and only if dependent products along fibrations in $\mathbb{C}$ preserve acyclicity 
of fibrations (\cite[Lemma 5.9]{shulmaninv}). Both additional properties are satisfied whenever $\mathbb{C}$ arises from a logical 
model category $\mathbb{M}$ whose fibrant objects are cofibrant and whose cofibrations are pullback-stable along fibrations. Function 
Extensionality is shown in (\cite[Remark 5.10]{shulmaninv}), and the homotopy lifting property holds in virtue of the equivalence of 
left and right homotopies between bifibrant objects in a model category (\cite[Corollary 1.2.6]{hovey}).

For fibrations $p\colon E\twoheadrightarrow B$, $q\colon X\twoheadrightarrow Y$ in
$\mathbb{C}$, consider the object
\begin{align}\label{equdefP}
P_{q,p}:=\sum_{!_{[Y,B]_{\mathbb{C}}}}\prod_{\pi_1}\mathrm{Equiv}_{B^Y\times Y}(\pi_2^{\ast}X,\mathrm{ev}^{\ast}p)
\end{align}
in $\mathbb{C}$, generalizing the corresponding construction in simplicial sets given in \cite[3.5]{klvsimp}). It 
represents homotopy-cartesian squares from $q$ to $p$ in the sense that for all $C\in\mathbb{C}$, there are natural 
isomorphisms
\begin{align}\label{equrepdefP}
\mathbb{C}(C,P_{p,q})\cong\{(f,w)\mid f\colon C\times Y\rightarrow B, w\colon C\times X\xrightarrow{\simeq} f^{\ast}p\text{ over }C\times Y\}.
\end{align}
A dashed lift of the form
\begin{align}\label{equrepdefP2}
\begin{gathered}
\xymatrix{
 & P_{q,p}\ar@{->>}[d] \\
C\ar[r]_(.4){\ulcorner f\urcorner}\ar@{-->}[ur] & [Y,B]_{\mathbb{C}}
}
\end{gathered}
\end{align}
corresponds exactly to a tuple $(f,w)$ under (\ref{equrepdefP}) such that $\ulcorner f\urcorner$ represents the morphism
$f\colon C\times Y\rightarrow B$.
 
Furthermore, every homotopy-cartesian square $\alpha\colon q_1\Rightarrow q_2$ between fibrations induces a morphism
\[P_{\alpha}\colon P_{q_2,p}\rightarrow P_{q_1,p}\]
in $\mathbb{C}$, given representably by horizontal composition to the right of
$C\times\alpha\colon C\times q_1\Rightarrow C\times q_2$ with elements in $\mathbb{C}(C,P_{q_2,p})$. Here we 
implicitly choose some cleavage for the target fibration $t\colon\mathcal{F}/p\twoheadrightarrow\mathbb{C}/B$ 
(that is, some choice of representatives for the isomorphism classes of pullbacks of $p$ along every map into $B$) to 
make $P_{\alpha}$ a strict natural transformation on representables. It is straight-forward as well to 
construct the objects $P_{q_i,p}$ together with the map $P_{\alpha}$ in the internal type theory of $\mathbb{C}$.

\begin{proposition}\label{propintBMloc}
Suppose $\mathbb{C}$ has $(-1)$-truncations. Let $\pi\colon\tilde{U}\twoheadrightarrow U$ be a univalent fibration in $\mathbb{C}$ and
$p\colon E\twoheadrightarrow B$ be $\pi$-small. 
If for all BM-equivalences $\alpha\colon q_1\Rightarrow q_2$ between $\pi$-small fibrations in $\mathbb{C}$ the induced morphism 
$P_{\alpha}\colon P_{q_2,p}\rightarrow P_{q_1,p}$ is a homotopy-equivalence, then $p$ is univalent. Whenever 
furthermore the $\pi$-small univalent fibrations in $\mathbb{C}$ have the homotopy lifting property and $\mathbb{C}$ satisfies 
function extensionality, then the converse holds as well.  
\end{proposition}
\begin{proof}
Suppose $P_{\alpha}\colon P_{q_2,p}\rightarrow P_{q_1,p}$ is a homotopy-equivalence for all BM-equivalences
$\alpha\colon q_1\Rightarrow q_2$ between small fibrations in $\mathbb{C}$. Let $\gamma\colon p\Rightarrow u(p)$ be a univalent 
completion of $p$. Then, by assumption, the map
\[P_{\gamma}\colon P_{u(p),p}\rightarrow P_{p,p}\]
is a homotopy-equivalence. Let $P_{\gamma}^{-1}\colon P_{p,p}\rightarrow P_{u(p),p}$ be a homotopy-inverse. The identity 
square $1_p$ from $p$ to $p$ is homotopy-cartesian, and hence yields a global section $1_p\in \mathbb{C}(1,P_{p,p})$. We 
obtain $P_{\gamma}^{-1}\circ 1_p\in\mathbb{\mathbb{C}}(1,P_{u(p),p})$, and thus an associated homotopy-cartesian square
$\beta\colon u(p)\Rightarrow p$. Both horizontal compositions of homotopy-cartesian squares
$\gamma\circ\beta\colon u(p)\Rightarrow u(p)$ and $\beta\circ\gamma\colon p\Rightarrow p$ are homotopy-cartesian again.
\begin{align}
\xymatrix{
E\ar[r]\ar@{->>}[d]^{p} & u(E)\ar@{->>}[d]^{u(p)}\ar[r] & E\ar@{->>}[d]^{p}\\
B\ar[r]\ar[r]_{b(\gamma)} & u(B)\ar[r]\ar[r]_{b(\beta)} & B
}
& \hspace{.5cm} &
\xymatrix{
u(E)\ar[r]\ar@{->>}[d]^{u(p)} & E\ar[r]\ar@{->>}[d]^{p} & u(E)\ar@{->>}[d]^{u(p)} \\
u(B)\ar[r]_{b(\beta)} & B\ar[r]_{b(\gamma)} & u(B) 
}
\end{align}
By univalence of $u(p)$, the bottom composition $u(B)\rightarrow B\rightarrow u(B)$ is homotopic to the identity on $B$ 
(as both maps represent the same pullback $u(p)$). If we can show that the composition $B\rightarrow u(B)\rightarrow B$ 
is homotopic to the identity as well, then the map $B\rightarrow u(B)$ is a homotopy-equivalence, and hence so is its
homotopy-pullback $E\rightarrow u(E)$. It follows that the fibrations $p$ and $u(p)$ are homotopy-equivalent. Since 
$u(p)$ is univalent, and univalence is invariant under homotopy-equivalence (\cite[Corollary 1.6.4]{thesis}), so is $p$. 

We are thus left to show that the composition $B\rightarrow u(B)\rightarrow B$ is homotopic to the identity. By 
construction of the object $P_{p,p}$ in (\ref{equdefP}) and (\ref{equrepdefP2}), the canonical fibrations
$t\colon P_{p,p}\twoheadrightarrow [B,B]_{\mathbb{C}}$ and $P_{u(p),p}\twoheadrightarrow [u(B),B]_{\mathbb{C}}$ 
are such that the diagram
\[\xymatrix{
\ast\ar[r]^(.4){P_{\gamma}^{-1}\circ 1_p}\ar@/_/[dr]_(.4){\ulcorner b(\beta)\urcorner} & P_{u(p),p}\ar@{->>}[d]\ar[r]^{P_{\gamma}} & P_{p,p}\ar@{->>}[d] \\
 & [u(B),B]_{\mathbb{C}}\ar[r]_(.55){b(\gamma)^{\ast}} & [B,B]_{\mathbb{C}}
}\]
commutes. By assumption, we have a homotopy
$H\colon P_{\gamma}P_{\gamma}^{-1}\sim 1_{P_{p,p}}$ and hence a diagram as follows.
\[\xymatrix{
 & & P(P_{p,p})\ar@{->>}[d]\ar[r]^{\text{tr}_t} & P([B,B]_{\mathbb{C}})\ar@{->>}[d] \\
\ast\ar[rr]_(.35){(P_{\gamma}P_{\gamma}^{-1}1_p,1_p)}\ar@/^/[urr]^{H\circ 1_p} & & P_{p,p}\ar@{->>}[r]_(.4){t\times t}\times P_{p,p} & [B,B]_{\mathbb{C}}\times [B,B]_{\mathbb{C}}
}\]
The composition $\mathrm{tr}_t\circ H\circ 1_p$ is a homotopy between the global sections $t\circ 1_p=1_B$ and
$t\circ P_{\gamma}P_{\gamma}^{-1}\circ 1_p = b(\beta)\circ b(\gamma)$ of $[B,B]_{\mathbb{C}}$. This yields a homotopy between the maps 
$1_B\colon B\rightarrow B$ and $b(\beta)\circ b(\gamma)\colon B\rightarrow B$ as claimed, using that there is always a map 
$P([B,B]_{\mathbb{C}})\rightarrow[B,P(B)]_{\mathbb{C}}$ over $[B,B]_{\mathbb{C}}\times [B,B]_{\mathbb{C}}$ (given by the term 
``happly" in \cite[Section 5]{shulmaninv}). This proves the first part of the proposition.

For the other direction, we use that under the assumption of function extensionality a fibration $p$ in $\mathbb{C}$ is univalent if 
and only if for every fibration $q$ in $\mathbb{C}$, the object $P_{q,p}$ is $(-1)$-truncated in $\mathbb{C}$
(\cite[Theorem 1.5.2]{thesis}, generalizing \cite[Theorem 3.5.3]{klvsimp}). 
Thus, suppose $p$ is univalent and let $\alpha\colon q_1\Rightarrow q_2$ be a BM-equivalence between $\pi$-small 
fibrations in $\mathbb{C}$. We want to show that $P_{\alpha}\colon P_{q_2,p}\rightarrow P_{q_1,p}$ is a homotopy-equivalence. Since 
$p$ is univalent, the objects $P_{q_i,p}$ are $(-1)$-truncated, and so any morphism $P_{q_1,p}\rightarrow P_{q_2,p}$ is a homotopy-
inverse to $P_{\alpha}$ (e.g.\ by Lemma~\ref{lemmamono}). It thus suffices to construct some map of this type. We will do so 
representably, and we will do so in the homotopy category $\text{Ho}(\mathbb{C})$ associated to $\mathbb{C}$, whose objects are 
exactly the objects of $\mathbb{C}$, and whose hom-sets are the quotients
\[\text{Ho}(\mathbb{C})(C,D)=\mathbb{C}(C,D)/\sim\]
by the homotopy relation in $\mathbb{C}$ (\cite[Section 3.3]{joyaltribes}). 
Therefore, for all $C\in\mathbb{C}$, we are to construct a map
\[k_C\colon\mathbb{C}(C,P_{q_1,p})_{/\sim}\rightarrow\mathbb{C}(C,P_{q_2,p})_{/\sim}\]
such that for all $f\colon C\rightarrow D$ in $\mathbb{C}$, the associated square commutes. Since both $P_{q_i,p}$ are
$(-1)$-truncated, the hom-sets $\mathbb{C}(C,P_{q_i,p})_{/\sim}$ are either empty or a singleton. Hence, naturality of the maps $k_C$ 
is a triviality.

Given a map $f\colon C\rightarrow P_{q_1,p}$, we are thus only to construct an element in $\mathbb{C}(C,P_{q_2,p})$. Therefore, 
consider the resulting diagram of solid arrows
\begin{align}\label{diagpropintBMloc}
\begin{gathered}
\xymatrix{
C\times X_1\ar@{->>}[dd]_{C\times q_1}\ar[dr]\ar[rr] & & E\ar@{->>}[dd]|\hole^(.7){p}\ar[dr] & \\ 
 &  C\times X_2\ar@{->>}[dd]_(.3){C\times q_2}\ar[rr]\ar@{-->}[ur]\ar[rr] & & \tilde{U}\ar@{->>}[dd]^{\pi} \\
C\times Y_1\ar[dr]\ar[rr]|(.56)\hole & & B\ar[dr]	& \\
 & C\times Y_2\ar[rr]\ar@{-->}[ur] & & U \\
}
\end{gathered}
\end{align}
in $\mathbb{C}$, where all four sides between the vertical fibrations are homotopy-cartesian. Here, the back square corresponds to the 
map $f$ under (\ref{equrepdefP}), the left hand side square is $C\times\alpha$, the front and the right hand side squares 
are given by smallness of the fibrations $p$ and $q$. The bottom square (and hence the top square) commutes up to 
homotopy in $\mathbb{C}$ by univalence of $\pi$ (by the same argument we applied to (\ref{diagunivcompletionunique})). 
Since $(-1)$-connected maps are pullback-stable along fibrations and stable under composition with homotopy-equivalences, both maps 
$C\times Y_1\rightarrow C\times Y_2$ and $C\times X_1\rightarrow C\times X_2$ are
$(-1)$-connected. The map $B\rightarrow U$ is $(-1)$-truncated by 
Proposition~\ref{propmereproppb}, and again so is the top map $E\rightarrow\tilde{U}$. We thus obtain the two dashed 
arrows in (\ref{diagpropintBMloc}) by Lemma~\ref{lemmahmtyimageinvariance}.2. The resulting square from $C\times q_2$ to 
$p$ commutes up to homotopy via Lemma~\ref{lemmamono}, as it does so after postcomposition with the $(-1)$-truncated map 
$B\rightarrow U$. By the homotopy lifting property in $\mathbb{C}$, we may replace its top map $C\times X_2\rightarrow E$ up to 
homotopy by another map $C\times X_2\rightarrow E$ such that the resulting square over $C\times Y_2\rightarrow B$ commutes 
\emph{strictly}. This strictly commutative square is homotopy-cartesian, because the front square and the right hand side square in 
(\ref{diagpropintBMloc}) are. It hence represents an element $k_C(f)\in\text{Ho}(\mathbb{C})(C,P_{q_2,p})$.
\end{proof}

We end this paper with three auxiliary remarks on Proposition~\ref{propintBMloc}.

\begin{remark}
Regarding the two additional assumptions on $\mathbb{C}$ in the second part of 
Proposition~\ref{propintBMloc}, the author is not aware of any conceptual reasons why either would be strictly necessary.
However, any proof of homotopy-invertibility of the maps  $P_{\alpha}\colon P_{q_2,p}\rightarrow P_{q_1,p}$ requires computation of 
the path-objects of the $P_{q_i,p}$, and therefore computation of identities of the function types $Y_i\rightarrow B$ occurring 
therein. Function extensionality determines precisely such identities. Since univalence of a universe $V$ that is closed under 
dependent sum, dependent product and identity type formation implies function extensionality for all type families captured by it 
(\cite[Section 4.9]{hott}), function extensionality as needed in Proposition~\ref{propintBMloc} is hence satisfied whenever the bases 
$Y_i\twoheadrightarrow 1$ and $U\twoheadrightarrow 1$ themselves are small with respect to some univalent fibration that is closed 
under all respective type formers.
\end{remark}

\begin{remark}
The formulation of Proposition~\ref{propintBMloc} by internal means in $\mathbb{C}$ allows us to omit a lengthy discussion of
associated higher categorical structures in the underlying $(\infty,1)$-category of $\mathbb{C}$. 
This remark is a sketch of the resulting higher categorical implications to be expected. Therefore, recall that every fibration 
category $\mathbb{C}$ comes equipped with an underlying quasi-category $\mathcal{C}:=\text{Ho}_{\infty}(\mathbb{C})$ which can be 
constructed by way of Szumi\l{}o's framed nerve construction (\cite[3]{szumilococomplqcats}), or by way of the more classical 
simplicial localization functors applied to the relative category $(\mathbb{C},\mathcal{W})$ (\cite{kapszuframes}). If $\mathbb{C}$ 
comes from a logical model category with cofibrant fibrant objects, its hom-spaces are equivalent to the classical 
function complexes obtained by frames or coframes in $\mathbb{M}$ (\cite{dkfcs}). It follows from 
the work of \cite{szumilococomplqcats} and \cite[4,5]{kapulkinlccc} that the $(\infty,1)$-localization functor
$\mathbb{C}\rightarrow\text{Ho}_{\infty}(\mathbb{C})$ of a type theoretic fibration category preserves pullbacks, 
dependent products and sums along fibrations. Thus, by the formula in (\ref{equdefP}), the object
$P_{q,p}\in\mathcal{C}$ presents the space-valued presheaf $\mathcal{C}^{op}\rightarrow\mathcal{S}$ of cartesian 
squares from $q\times\blank$ to $p$ in $\mathcal{C}$. Using that all objects in a type theoretic fibration category are 
cofibrant, and that $(-1)$-connected maps are homotopy-pullback-stable in $\mathcal{C}$, one can then use
Proposition~\ref{propintBMloc} to show that the small univalent maps in $\mathbb{C}$ are exactly the small maps that 
are local for the small BM-equivalences in the underlying quasi-category of $\mathbb{C}$.
It follows that univalent completion as defined in Proposition~\ref{propexunivcomp} yields a left adjoint to the fully 
faithful inclusion
\[\mathcal{U}^{\times}_{\pi}\hookrightarrow(\mathcal{C}^{\Delta^1})^{\times}_{\pi},\]
where $(\mathcal{C}^{\Delta^1})^{\times}_{\pi}\subset\mathcal{C}^{\Delta^1}$ is the subcategory generated by the small arrows in
$\mathcal{C}$ and cartesian squares between them, and $\mathcal{U}^{\times}_{\pi}$ is spanned by the small univalent maps in
$\mathcal{C}$.
\end{remark}

\begin{remark}
One can show that for every triple of fibrations $p,q,r$ in $\mathbb{C}$ there is an arrow
$\circ\colon P_{r,q}\times P_{q,p}\rightarrow P_{r,p}$ given by concatenation of homotopy-cartesian squares and a unit
$1_p\colon\ast\rightarrow P_{p,p}$ given by the identity square. The associativity and unitality diagrams can be shown 
to commute up to homotopy. It follows that the assignment $P$ yields a
$(\text{Ho}(\mathbb{C}),\times)$-enriched category $(\mathcal{F},P,1,\circ)$. Furthermore, every pointwise homotopy-equivalence 
$w\colon r\rightarrow q$ induces homotopy-equivalences $w^{\ast}\colon P_{q,p}\rightarrow P_{r,p}$ and
$w_{\ast}\colon P_{p,r}\rightarrow P_{p,q}$.
One may thus ask whether this assignment more generally induces an enrichment of $(\mathcal{C}^{\Delta^1})^{\times}$ in 
the cartesian closed $(\infty,1)$-category $\mathcal{C}$, such that
$\mathcal{C}(1,P_{q,p})\simeq(\mathcal{C}^{\Delta^1})^{\times}(q,p)$ for all $q,p\in \mathcal{C}^{\Delta^1}$. 
\end{remark}

\bibliographystyle{amsplain}
\bibliographystyle{alpha}
\bibliography{BSBib}
\Address

\end{document}